\numberwithin{equation}{section}
\newtheorem{theorem}{Theorem}[section]
\newtheorem{lemma}[theorem]{Lemma}
\newtheorem{prop}[theorem]{Proposition}
\theoremstyle{definition}
\newtheorem{remark}[theorem]{Remark}
\theoremstyle{definition}
\theoremstyle{definition}
\def\dashint{\operatorname%
{\,\,\text{\bf-}\kern-.98em\DOTSI\intop\ilimits@\!\!}}
\def\\det{\text{\det}}
\def\.5{\frac{1}{2}}
\newcommand{\RN}[1]{%
  \textup{\uppercase\expandafter{\romannumeral#1}}%
}
\newcommand{\dist}{\text{dist}}
\renewcommand{\epsilon}{\varepsilon}
\newcounter{marnote}
\begin{document}

\title[Estimates for stress concentration of the  Stokes flow ]{Estimates for Stress Concentration between Two Adjacent Rigid Inclusions in Stokes Flow}

\author[H.G. Li]{Haigang Li}
\address[H.G. Li]{School of Mathematical Sciences, Beijing Normal University, Laboratory of MathematiCs and Complex Systems, Ministry of Education, Beijing 100875, China.}
\email{hgli@bnu.edu.cn}

\author[L.J. Xu]{Longjuan Xu}
\address[L.J. Xu]{Academy for Multidisciplinary Studies, Capital Normal University, Beijing 100048, China.}
\email{ljxu311@163.com}


\date{\today} 

\begin{abstract}
In this paper, we establish the estimates for the gradient and the  second-order partial derivatives for the Stokes flow in the presence of two closely located strictly convex inclusions in dimension three. Moreover, the blow-up rate of the gradient is showed to be optimal by a pointwise upper bound and a lower bound in the narrowest region. We also show the optimal blow-up rate of Cauchy stress tensor. In dimensions greater than three, the upper bounds of the gradient are established. These results answer the questions raised in \cite{K}.
\end{abstract}

\maketitle

\section{Introduction}
Let $D$ be a bounded open set in $\mathbb{R}^{d}$, $d\geq2$, that contains two adjacent rigid particles $D_{1}$ and $D_{2}$, with $\varepsilon$ apart and far away from $\partial D$, that is,
\begin{equation*}
\begin{split}
\overline{D}_{1},\overline{D}_{2}\subset D,\quad
\varepsilon:=\mbox{dist}(D_{1},D_{2})>0,\quad\mbox{dist}(D_{1}\cup D_{2},\partial D)>\kappa_{0}>0,
\end{split}
\end{equation*}
where $\kappa_{0}$ is a constant independent of $\varepsilon$. We assume that $D_{1}$ and $D_{2}$ are of class $C^{3}$, and their $C^{3}$ norms are bounded by another positive constant $\kappa_{1}$, independent of $\varepsilon$. Denote the linear space of rigid displacements in $\mathbb{R}^{d}$:
$$\Psi:=\Big\{{\boldsymbol\psi}\in C^{1}(\mathbb{R}^{d};\mathbb{R}^{d})~|~e({\boldsymbol\psi}):=\frac{1}{2}(\nabla{\boldsymbol\psi}+(\nabla{\boldsymbol\psi})^{\mathrm{T}})=0\Big\},$$
with a basis $\{\boldsymbol{e}_{i},~x_{k}\boldsymbol{e}_{j}-x_{j}\boldsymbol{e}_{k}~|~1\leq\,i\leq\,d,~1\leq\,j<k\leq\,d\}$, where $e_{1},\dots,e_{d}$ is the standard basis of $\mathbb{R}^{d}$. Denote this basis of $\Psi$ as $\{\boldsymbol\psi_{\alpha}\}$, $\alpha=1,2,\dots,\frac{d(d+1)}{2}$. Let us consider the following Stokes flow:
\begin{align}\label{sto}
\begin{cases}
\mu \Delta {\bf u}=\nabla p,\quad\quad~~\nabla\cdot {\bf u}=0,&\hbox{in}~~\Omega:=D\setminus\overline{D_1\cup D_2},\\
{\bf u}|_{+}={\bf u}|_{-},&\hbox{on}~~\partial{D_{i}},~i=1,2,\\
e({\bf u})=0, &\hbox{in}~~D_{i},~i=1,2,\\
\int_{\partial{D}_{i}}\frac{\partial {\bf u}}{\partial \nu}\Big|_{+}\cdot{\boldsymbol\psi}_{\alpha}-\int_{\partial{D}_{i}}p\,{\boldsymbol\psi}_{\alpha}\cdot
\nu=0
,&i=1,2,\\
{\bf u}={\boldsymbol\varphi}, &\hbox{on}~~\partial{D},
\end{cases}
\end{align}
where $\mu>0$, $\alpha=1,2,\dots,\frac{d(d+1)}{2}$, 
$\frac{\partial {\bf u}}{\partial \nu}\big|_{+}:=\mu(\nabla {\bf u}+(\nabla {\bf u})^{\mathrm{T}})\nu,$
and $\nu$ is the  unit outer normal vector of $D_{i},~i=1,2$. Here and throughout this paper the subscript $\pm$ indicates the limit from outside and inside the domain, respectively. Moreover, it follows from $\nabla\cdot{\bf u}=0$ and Gauss theorem  that the prescribed velocity field ${\boldsymbol\varphi}$ satisfy the compatibility condition:
\begin{equation}\label{compatibility}
\int_{\partial{D}}{\boldsymbol\varphi}\cdot \nu\,=0.
\end{equation}
With \eqref{compatibility}, the existence and uniqueness of the solutions for the Stokes flow in a bounded domain follows from the argument in \cite{Lady1959} by an integral variational formulation and Riesz representation theorem. The regularity can be obtained directly from the general theory of Agmon, Douglis and Nirenberg \cite{ADN1964} and Solonnikov \cite{Solonni1966} since the Stokes flow is elliptic in the sense of Douglis-Nirenberg \cite{T1984}.

The main purpose of this paper is to establish the estimates of stress concentration for the Stokes flow \eqref{sto} with  \eqref{compatibility} in three dimensions and higher dimensions, as the distance between inclusions tends to zero, which answers the questions raised by Kang \cite{K} where he mentioned that the extensions to general shape of inclusions and higher dimensions $d\geq3$ are quite challenging. The corresponding two-dimensional problem was studied by Ammari, Kang, Kim, and Yu \cite{AKKY} for cylinder inclusions, and by Li and Xu \cite{LX2} for general convex inclusions. Compared to the two-dimensional case \cite{LX2}, more new difficulties are needed to overcome and the analysis is more involved in three dimensions and higher dimensions.

\subsection{Background}\label{subsec1.1}
It is vital important to study the field enhancements in the narrow region between two rigid particles in material sciences and fluid mechanics. In practice, complex fluids including particle suspensions usually result in complicated flow behavior and characteristic rheological properties. Many applied mathematicians and physicists made important progress in this field. By the beginning of the twentieth century, hydrodynamic lubrication theory was well advanced, based on the work of Navier and Stokes \cite{H1994}. The Reynolds equation \cite{Reynolds1886} had been widely validated for continuous liquid films acting under restricted conditions \cite{Oscar1987}. It is proved that the fluid film is thick enough to prevent physical or direct contact between the friction-coupled elements. Elastohydrodynamic lubrication was further developed by Dowson and Higginson \cite{DH1966} and by Winer and Cheng \cite{Cheng1967} in the latter half of the twentieth century, which takes into account the elastic deflection of solid contacting surfaces. Due to very high pressures at the contact interface, the increase in viscosity ensures continuity of the fluid film in a bearing contact.

In this paper we assume that the Reynolds number is low to ignore the nonlinear inertial force, whereas at high Reynolds number the boundary layers problem of  is another very important topic. Beginning with Reynolds famous paper in 1883, the stability and transition to turbulence of laminar flows at high Reynolds number has been an active field in fluid mechanics \cite{Yag2012}. In fluid lubrication theory, thin layers of fluid can prevent solid bodies from contact, and so a solid body close to a solid plane or two nearly parallel surface are also often considered \cite{Cope1949}. For instance, by using spherical polars, one can calculate the force exerted by a sphere immersed in unbounded fluid which is at rest at infinity \cite{Lady,Lamb}. For the simply-connected exterior domain, Odqvist \cite{Odqvist} proved that a solution can be written as suitable potentials of double-layer.
However, when two spheres are very closely spaced and may even touch, it is a question of particular interest  whether the stresses remain uniformly bounded. Actually, it was an analogous question concerning linear elasticity problem posed by Ivo Babuska \cite{Bab} that initially piqued our interest in this problem area.

In material sciences, we would like to point out the connection to the field enhancement in composites in the electro-static case and in the elasto-static case. In \cite{Bab}, Babu\u{s}ka et al.  observed that if the distance $\varepsilon$ between inclusions tends to zero, the stress (the gradient of solutions) of the systems of linear elasticity 
$$\mu\Delta{\bf u}+(\lambda+\mu)\nabla(\nabla\cdot{\bf u})=0$$ remains bounded. By developing rigorous analysis, Bonnetier and Vogelius \cite{BV}, Li and Vogelius \cite{LV}, Li and Nirenberg \cite{LN} proved the numerical observation in \cite{Bab}. The bounded estimates in \cite{BV,LV,LN}  depend on the ellipticity of the coefficients. In order to figure out the enhancement caused by the smallness of the interparticles, the coefficients occupying in the inclusions are mathematically assumed to degenerate to infinity. In  the context of the Lam\'e systems with partially infinite coeffcients,  the blow-up rate of the stress is $\varepsilon^{-1/2}$ in dimension two \cite{BLL,KY}, $(\varepsilon|\ln\varepsilon|)^{-1}$ in dimension three \cite{BLL2,Li2021,LX}.  More results can be found in \cite{CL}. The corresponding results for the scalar case, describing the electrical field enhancement by the perfect conductivity problem in the electro-static case, see, for example, \cite{AKL,AKL3,AKKY,BC,BLY,BT,KLeY,KLiY,KLiY2,KangYu,Li,Yun,Yun2}. We also refer the reader to  \cite{CS1,CS2} about cases of $p$-Laplacian and Finsler Laplacian. 

The main purpose of this paper is to investigate the interaction between two adjacent particles in a viscous incompressible fluid, modelled by \eqref{sto}, when the distance $\varepsilon$ tends to zero.  Indeed, even though in numerical simulation the hydrodynamic interactions among particles is not easy to treat. So in the previous studies the interparticle distance is usually assumed not relatively small to avoid the singularity caused by the interaction \cite{AGKL,AGGJS,BMT,GH}. However, in densely packed suspensions such singularity between multiple bodies play a significant role in liquid-solid model where the small interparticle distance occurs.  Therefore, the investigation of the suspension problem is an essentially important issue in the analysis of complex fluids. There are also wide applications in engineering, fluid mechanics, and material sciences, see e.g. \cite{G1994,S1989,SB2001,SB2002}. Here we would like to mention a very recent interesting result on this topic by Ammari et al \cite{AKKY} where $D_{1}$ and $D_{2}$ are assumed to be two adjacent disks in the two-dimensional steady Stokes system, they first derived an asymptotic representation formula for the stress and completely captured the singular behavior of the stress by using the bipolar coordinates and showed the blow-up rate is $\varepsilon^{-1/2}$ in dimension two, the same as the linear elasticity case. However, as far as we know it is difficult to extend to study the 3D problem. We fix it in this paper by proving the optimal blow-up rate of Cauchy stress tensor, as well as the upper bounds of the first and second order derivatives of the solutions  in three dimension and higher dimensions. We would like to point out that our method works well for the convex inclusions with arbitrary shape. 

\subsection{Assumptions on $\partial D_1$ and $\partial D_2$}\label{subsec1.2}
Let $D_{1}^{0}$ and $D_{2}^{0}$ be a pair of (touching at the origin) $C^3$-convex subdomains of $D$, far away from $\partial D$, and satisfy
\begin{equation}\label{def_D0}
D_{1}^{0}\subset\{(x', x_{d})\in\mathbb R^{d}~:~ x_{d}>0\},\quad D_{2}^{0}\subset\{(x', x_{d})\in\mathbb R^{d}~:~ x_{d}<0\},
\end{equation}
with $\{x_d=0\}$ being their common tangent plane, after a rotation of coordinates if necessary. This means that the axes $x_{1},\dots,x_{d-1}$ are in the tangent plane. Here and throughout this paper, we use superscripts prime to denote the ($d-1$)-dimensional variables and domains, such as $x'$ and $B'$. 
Translate $D_{i}^{0}$ ($i=1,\, 2$) by $\pm\frac{\varepsilon}{2}$ along $x_{d}$-axis in the following way
\begin{equation*}
D_{1}^{\varepsilon}:=D_{1}^{0}+(0',\frac{\varepsilon}{2}),\quad \text{and}\quad D_{2}^{\varepsilon}:=D_{2}^{0}+(0',-\frac{\varepsilon}{2}).
\end{equation*}
Then we have $\varepsilon=\mbox{dist}(\partial D_1,\partial D_2)$. For simplicity of notation, we drop the superscript $\varepsilon$ and denote
\begin{equation*}
D_{i}:=D_{i}^{\varepsilon}\, (i=1, \, 2), \quad  \Omega:=D\setminus\overline{D_1\cup D_2}.
\end{equation*}
Denote $P_1:= (0',\frac{\varepsilon}{2})$ and $P_2:=(0',-\frac{\varepsilon}{2})$ by the two nearest points between $\partial D_1$ and $\partial D_2$ such that
$\varepsilon=\text{dist}(P_1, P_2)=\text{dist}(\partial D_1, \partial D_2)$. 

Since we assume that $\partial D_1$ and $\partial D_2$ are of $C^{3}$, then near the origin there exists a constant $R$, independent of $\varepsilon$, such that the portions of $\partial D_1$ and $\partial D_2$ are represented, respectively, 
by graphs 
\begin{equation*}
x_d=\frac{\varepsilon}{2}+h_1(x')\quad\text{and}\quad x_d=-\frac{\varepsilon}{2}-h_2(x'),\quad \text{for}~ |x'|\leq 2R,
\end{equation*}
where $h_1$, $h_2\in C^{3}(B'_{2R}(0'))$ and satisfy 
\begin{align}
&-\frac{\varepsilon}{2}-h_{2}(x') <\frac{\varepsilon}{2}+h_{1}(x'),\quad\mbox{for}~~ |x'|\leq 2R,\label{h1-h2}\\
&h_{1}(0')=h_2(0')=0,\quad \nabla_{x'} h_{1}(0')=\nabla_{x'}h_2(0')=0,\label{h1h1}\\
&h_{1}(x')=h_2(x')=\frac{\kappa_{2}}{2}|x'|^{2}+O(|x'|^{3}),\quad\mbox{for}~~|x'|<2R,\label{h1h14}
\end{align}
where the constant $\kappa_{2}>0$. For simplicity of computation, here we assumed that near the origin $\partial D_1$ and $\partial D_2$ are $2$-convex, and symmetric with $x_{3}$-axis. Our method can be applied to deal with the general convex inclusion cases, like $h_{1}(x')=h_2(x')=\frac{\kappa_{2}}{2}x_1^{2}+\frac{\kappa'_{2}}{2}x_2^{2}+O(|x'|^{3})$, and $h_{1}(x')=h_2(x')=\frac{\kappa_{m}}{2}|x'|^{m}+O(|x'|^{m+1})$, $m\geq2$. For $0\leq r\leq 2R$, let us define the neck region between $D_{1}$ and $D_{1}$ by
\begin{equation*}
\Omega_r:=\left\{(x',x_{d})\in \Omega~:~ -\frac{\varepsilon}{2}-h_2(x')<x_{d}<\frac{\varepsilon}{2}+h_1(x'),~|x'|<r\right\}.
\end{equation*}

Because the difficulties mainly focus on the neck region $\Omega_{2R}$, without loss of generality, we may assume that ${\boldsymbol\varphi}\in C^{2,\alpha}(\partial D;\mathbb R^3)$ for some $0<\alpha<1$, and $\partial D$ is of class $C^{3}$.  Indeed, the $H^1$ norm of the solution ${\bf u}$ in $\Omega$ is bounded by a universal constant. Then standard elliptic estimates \cite{ADN1964,Solonni1966} give a universal bound of ${\bf u}$ in $C^2$ norm in $\{x\in D~|~ \frac{\kappa_{0}}{10}<\dist(x,\partial D)<\frac{\kappa_{0}}{2}\}$. We deal with the problem instead in $D':=\{x\in D~ |~ dist(x,\partial D)>\frac{\kappa_{0}}{8}\}$ with ${\boldsymbol\varphi}:={\bf u}|_{\partial D'}$. Throughout this paper, we say a constant is {\em{universal}} if it depends only on $d,\mu,\kappa_{0},\kappa_{1},\kappa_{2}$, and the upper bounds of the $C^{3}$ norms of $\partial{D}$, $\partial{D}_{1}$ and $\partial{D}_{2}$, but independent of $\varepsilon$.  

\subsection{Upper Bounds of $|\nabla{\bf u}|$ and $|p|$ in 3D}\label{subsection-upp}

Our first main result in dimension three is as follows, including estimates for the gradient and estimates for the second order derivatives.

\begin{theorem}\label{mainthm}(Upper Bounds in 3D)
Assume that $D_1,D_2,D,\Omega$ and $\varepsilon$ are defined as above, and ${\boldsymbol\varphi}\in C^{2,\alpha}(\partial D;\mathbb R^3)$ for some $0<\alpha<1$. Let ${\bf u}\in H^1(D;\mathbb R^3)\cap C^2(\bar{\Omega};\mathbb R^3)$ and $p\in L^2(D)\cap C^1(\bar{\Omega})$ be the solution to \eqref{sto} and \eqref{compatibility}. Then for sufficiently small $0<\varepsilon<1/2$, we have the following assertions.

(i) For the gradient, 
\begin{align}
&|\nabla{{\bf u}}(x)|\leq
\frac{C (1+|\ln\varepsilon||x'|)}{|\ln\varepsilon|(\varepsilon+|x'|^{2})}\|{\boldsymbol\varphi}\|_{C^{2,\alpha}(\partial D;\mathbb R^3)},\quad~x\in\Omega_{R},\nonumber\\
&\inf_{c\in\mathbb{R}}\|p+c\|_{C^0(\bar{\Omega}_{R})}\leq \frac{C}{\varepsilon^{3/2}|\ln\varepsilon|}\|{\boldsymbol\varphi}\|_{C^{2,\alpha}(\partial D;\mathbb R^3)},\label{thm_p3D}
\end{align}
and
$$\|\nabla{\bf u}\|_{L^{\infty}(\Omega\setminus\Omega_{R})}+\|p\|_{L^{\infty}(\Omega\setminus\Omega_{R})}\leq\,C\|{\boldsymbol\varphi}\|_{C^{2,\alpha}(\partial D;\mathbb R^3)},$$
where $C$ is a universal constant, independent of $\varepsilon$. In particular, 
\begin{equation*}
\|\nabla{{\bf u}}\|_{L^{\infty}(\Omega)}\leq \frac{C}{\varepsilon|\ln\varepsilon|}\|{\boldsymbol\varphi}\|_{C^{2,\alpha}(\partial D;\mathbb R^3)}.
\end{equation*}

(ii) For the second order derivatives,
\begin{align*}
|\nabla^2{\bf u}(x)|+|\nabla p(x)|\leq
\frac{C(1+|\ln\varepsilon||x'|)}{|\ln\varepsilon|(\varepsilon+|x'|^2)^{2}}\|{\boldsymbol\varphi}\|_{C^{2,\alpha}(\partial D;\mathbb R^3)},\quad~x\in\Omega_{R},
\end{align*}
and 
$$\|\nabla^2{\bf u}\|_{L^{\infty}(\Omega\setminus\Omega_{R})}+\|\nabla p\|_{L^{\infty}(\Omega\setminus\Omega_{R})}\leq\,C\|{\boldsymbol\varphi}\|_{C^{2,\alpha}(\partial D;\mathbb R^3)}.$$
\end{theorem}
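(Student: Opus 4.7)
My plan is to adapt to the Stokes system the decomposition-and-rescaling framework developed for the Lam\'e system in \cite{BLL2,LX,Li2021}. Since $e({\bf u})=0$ on each $D_i$, the trace ${\bf u}|_{\partial D_i}$ lies in the finite-dimensional space $\Psi$; writing ${\bf u}|_{\partial D_i}=\sum_{\alpha=1}^{6}C_i^{\alpha}{\boldsymbol\psi}_{\alpha}$, the solution splits as
\begin{equation*}
{\bf u}={\bf v}_0+\sum_{i=1}^{2}\sum_{\alpha=1}^{6}C_i^{\alpha}{\bf v}_i^{\alpha},\qquad p=p_0+\sum_{i=1}^{2}\sum_{\alpha=1}^{6}C_i^{\alpha}p_i^{\alpha},
\end{equation*}
where $({\bf v}_i^{\alpha},p_i^{\alpha})$ solves the homogeneous Stokes system in $\Omega$ with boundary value ${\boldsymbol\psi}_{\alpha}$ on $\partial D_i$ and zero on $\partial D_{3-i}\cup\partial D$, while $({\bf v}_0,p_0)$ carries ${\boldsymbol\varphi}$ on $\partial D$ and vanishes on $\partial D_1\cup\partial D_2$. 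The twelve constants $C_i^{\alpha}$ are then pinned down by the twelve balance relations in \eqref{sto}.

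The heart of the argument is to construct, for each $(i,\alpha)$, an explicit divergence-free approximate profile $(\bar{\bf v}_i^{\alpha},\bar p_i^{\alpha})$ in the neck region $\Omega_{2R}$ that assumes the same boundary values as ${\bf v}_i^{\alpha}$ on $\partial D_1\cup\partial D_2$. A natural ansatz is to interpolate linearly in $x_d$ between the data on the two inclusions and then add a Bogovskii-type correction supported in $\Omega_{2R}$ to enforce $\nabla\!\cdot\!\bar{\bf v}_i^{\alpha}=0$; the accompanying pressure $\bar p_i^{\alpha}$ is chosen so that $(\bar{\bf v}_i^{\alpha},\bar p_i^{\alpha})$ satisfies the Stokes system up to a harmless source supported outside the narrowest part of $\Omega_{2R}$. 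Direct computation on this ansatz will yield the pointwise bounds $|\nabla\bar{\bf v}_i^{\alpha}(x)|\lesssim(\varepsilon+|x'|^2)^{-1}$ and $|\bar p_i^{\alpha}(x)|\lesssim(\varepsilon+|x'|^2)^{-3/2}$, from which the $\varepsilon^{-3/2}$ scaling in \eqref{thm_p3D} originates.

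Setting ${\bf w}_i^{\alpha}={\bf v}_i^{\alpha}-\chi\bar{\bf v}_i^{\alpha}$ for a cutoff $\chi$ supported in $\Omega_{2R}$, the remainder satisfies a Stokes system whose right-hand side is localized away from the center of the neck. I would then work in dyadic cylinders centered at points $(z',0)$ of thickness $\delta(z')=\sqrt{\varepsilon+|z'|^2}$, rescale each cylinder to unit size, apply the Agmon--Douglis--Nirenberg Schauder theory for Stokes \cite{ADN1964,Solonni1966} on the rescaled geometry, and iterate as in \cite{BLL2,LX} to upgrade a global $H^1$ bound into pointwise estimates $|\nabla{\bf w}_i^{\alpha}|\lesssim 1$ and $|q_i^{\alpha}|\lesssim(\varepsilon+|x'|^2)^{-1/2}$. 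Adding back $\bar{\bf v}_i^{\alpha}$ produces the sharp singular estimates for $({\bf v}_i^{\alpha},p_i^{\alpha})$; those for $({\bf v}_0,p_0)$ are universal by standard interior regularity. The second-order statements then follow from a second application of the same rescaled Schauder estimate, now to $\nabla{\bf v}_i^{\alpha}$ in each cylinder.

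The coefficients $C_i^{\alpha}$ are recovered from the twelve balance relations, which form a linear system whose matrix entries involve the integral of the traction of ${\bf v}_j^{\beta}$ against ${\boldsymbol\psi}_{\alpha}$ on $\partial D_i$. The dominant diagonal entries are controlled from below by integrals of the form $\int_{|x'|<R}(\varepsilon+|x'|^2)^{-2}\,dx'\sim|\ln\varepsilon|$, and this logarithmic divergence is precisely what produces the $|\ln\varepsilon|^{-1}$ denominator throughout Theorem~\ref{mainthm}. Inverting this nearly diagonal system yields $|C_i^{\alpha}|\lesssim\|{\boldsymbol\varphi}\|_{C^{2,\alpha}(\partial D;\mathbb R^3)}$, and combining with the pointwise estimates for $({\bf v}_i^{\alpha},p_i^{\alpha})$ completes the argument. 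The main obstacle I anticipate is the incompressibility constraint: unlike in linear elasticity, a naive linear interpolation has nonzero divergence whose pressure response would destroy the sharp $(\varepsilon+|x'|^2)^{-1}$ bound on the velocity gradient, so the Bogovskii correction must be designed to respect the anisotropic geometry of $\Omega_{2R}$. A closely related technical point is that the pressure is one power of $(\varepsilon+|x'|^2)^{-1/2}$ more singular than $|\nabla{\bf u}|$, which forces a two-step iteration in which a preliminary $L^2$ estimate for the pressure must be fed back as a source into the energy bound for ${\bf w}_i^{\alpha}$.
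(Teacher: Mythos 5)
Your overall framework (decomposition into $({\bf u}_i^{\alpha},p_i^{\alpha})$ plus explicit divergence-free neck profiles, rescaled iteration with ADN/Solonnikov estimates, then solving the $12\times12$ balance system) is the same skeleton the paper uses, but as written your plan has a genuine gap at the last step: you conclude only $|C_i^{\alpha}|\lesssim\|{\boldsymbol\varphi}\|_{C^{2,\alpha}}$ and then "combine with the pointwise estimates for $({\bf v}_i^{\alpha},p_i^{\alpha})$." Since $|\nabla {\bf u}_i^{\alpha}|\sim\delta(x')^{-1}$ in the neck, boundedness of the coefficients alone gives only $|\nabla{\bf u}|\lesssim\varepsilon^{-1}$, not the asserted $(\varepsilon|\ln\varepsilon|)^{-1}$ rate nor the refined bound $\frac{C(1+|\ln\varepsilon||x'|)}{|\ln\varepsilon|(\varepsilon+|x'|^2)}$. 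The $|\ln\varepsilon|$ gain does not come from the size of the diagonal entries by itself; it comes from two structural facts you never state: (a) the singularities of ${\bf u}_1^{\alpha}$ and ${\bf u}_2^{\alpha}$ cancel when the data agree on both inclusions, i.e.\ $\|\nabla({\bf u}_1^{\alpha}+{\bf u}_2^{\alpha})\|_{L^\infty(\Omega)}\le C$ (Proposition \ref{prop1.7}), so the singular part of ${\bf u}$ is $\sum_{\alpha}(C_1^{\alpha}-C_2^{\alpha})\nabla{\bf u}_1^{\alpha}$; and (b) the \emph{differences} are small, $|C_1^{\alpha}-C_2^{\alpha}|\lesssim|\ln\varepsilon|^{-1}$ for $\alpha=1,2,5,6$ and $\lesssim\varepsilon$ for $\alpha=3$ (Proposition \ref{lemCialpha3D}, estimate \eqref{estC1121}). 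Proving (b) requires rewriting the system as $a_{11}(C_1-C_2)=b_1-(a_{11}+a_{21})C_2$, the two-sided bounds $a_{11}^{\alpha\alpha}\sim|\ln\varepsilon|$ ($\alpha=1,2,5,6$), $a_{11}^{33}\sim\varepsilon^{-1}$, the boundedness of the off-diagonal entries and of $a_{11}^{\alpha\beta}+a_{21}^{\beta\alpha}$, and careful cofactor/Cramer estimates; none of this is in your outline. (Relatedly, your heuristic $\int_{|x'|<R}(\varepsilon+|x'|^2)^{-2}dx'\sim|\ln\varepsilon|$ is incorrect — that two-dimensional integral is $\sim\varepsilon^{-1}$; the correct computation is $\int_{\Omega_R}\delta^{-2}dx\sim\int_{|x'|<R}\delta^{-1}dx'\sim|\ln\varepsilon|$.)

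Two secondary points where your plan is thinner than what is actually needed. First, your claim $|\bar p_i^{\alpha}|\lesssim(\varepsilon+|x'|^2)^{-3/2}$ uniformly in $\alpha$ is not right for the translation datum ${\boldsymbol\psi}_3$: the corresponding pressure is genuinely of order $\delta^{-2}$ (see \eqref{p133D}), and the final bound \eqref{thm_p3D} is only rescued because $|C_1^3-C_2^3|\lesssim\varepsilon$ while the $\varepsilon^{-3/2}$ contribution of $\alpha=5,6$ is damped by $|\ln\varepsilon|^{-1}$ — again the difference estimates are indispensable. Second, for $\alpha=3,5,6$ the profiles themselves have unbounded global energy ($\int_\Omega|\nabla{\bf v}_1^3|^2\sim\varepsilon^{-1}$), so the global $H^1$ bound for the remainder that you want to feed into the iteration is not automatic from a generic Bogovskii correction; the paper gets it by pairing each ${\bf v}_1^{\alpha}$ with a hand-chosen $\bar p_1^{\alpha}$ so that ${\bf f}=\mu\Delta{\bf v}_1^{\alpha}-\nabla\bar p_1^{\alpha}$ loses its worst terms and has polynomial structure in $x_3$, which permits an integration by parts in $x_3$ (Lemmas \ref{lem_energyw13} and \ref{lem3.4}). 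Your "two-step iteration feeding an $L^2$ pressure bound back into the energy estimate" is not obviously a substitute for this cancellation, and you would need to verify it does not degrade the rate.
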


\begin{remark}
Our strategy to establish the gradient estimates for the solution of Stokes system \eqref{Stokessys} in spirit follows \cite{BLL2}. However, here is another difficulty to obtain an appropriate estimate for $p$. To this end, we have to derive the estimates for the gradient of $p$ and the second-order derivatives of ${\bf u}$. The proof is more technical and involved in the constructions of auxiliary functions and during adopting the iteration approach. It is unfortunately rather long. We carry it out through two stages: $(i)$ Section \ref{sec2outline3D} for the framework, and $(ii)$ Section \ref{sec_estimate3D} for detailed procedure. 

We would like to emphasize that here the blowup rate of $|\nabla^2{\bf u}|$ is $(\varepsilon^2|\ln\varepsilon|)^{-1}$, which is larger than that for Lam\'e system. The reason is that the incompressible condition makes the constructions of auxiliary functions ${\bf v}_i^\alpha$, especially for $\alpha=5,6$, more complicated, which leads to $|\sum_{\alpha=5}^{6}(C_{1}^{\alpha}-C_{2}^{\alpha})\nabla^2{\bf v}_1^\alpha|$ be the main singular term of $|\nabla^2{\bf u}|$. More details can be found in Subsection \ref{subsec-constuction} and Subsection \ref{subsec-mainthm}. Besides, these second-order derivatives of ${\bf u}$ established here may be a key step to design a proper finite elements in numerical simulations, because there $H^2$ estimate usually is required.
\end{remark}

\begin{remark}
If ${\boldsymbol\varphi}=0$, then the solution to \eqref{sto} is ${\bf u}\equiv0$ and  Theorem \ref{mainthm} is trivial. So we only need to prove them for $\|{\boldsymbol\varphi}\|_{C^{2,\alpha}(\partial D;\mathbb R^3)}=1$, by considering ${\bf u}/\|{\boldsymbol\varphi}\|_{C^{2,\alpha}(\partial D;\mathbb R^3)}$.
\end{remark}

\subsection{Lower Bounds of $|\nabla{\bf u}|$ in 3D}\label{subsection-lower}

We are going to prove the lower bound of $|\nabla{\bf u}(x)|$ on the segment $\overline{P_1 P_2}$ which is the shortest line between $\partial D_1$ and $\partial D_2$, with the same order as showed in Theorem \ref{mainthm}, under some additional symmetric assumptions on the domain and the given boundary data, for simplicity. Suppose that 
\begin{align*}
({\rm S_{H}}): ~&~D_{1}\cup D_{2}~\mbox{ and~} D ~\mbox{are~ symmetric~ with~ repect~ to~ each~ coordinate~axis~}x_{i}, \\&\mbox{~and~}\mbox{the coordinate plane~} \{x_{3}=0\};
\end{align*}
and
\begin{align*}
({\rm S_{{\boldsymbol\varphi}}}):~~ {\boldsymbol\varphi}^{i}(x)=-{\boldsymbol\varphi}^{i}(-x),\quad\,i=1,2,3.\hspace{5.6cm}
\end{align*}

We introduce the Cauchy stress tensor
\begin{equation*}
\sigma[{\bf u},p]=2\mu e({\bf u})-p\mathbb{I},
\end{equation*}
where $\mathbb{I}$ is the identity matrix, and $({\bf u},p)$ is a pair of solution to \eqref{sto}. Then we reformulate \eqref{sto} as
\begin{align}\label{Stokessys}
\begin{cases}
\nabla\cdot\sigma[{\bf u},p]=0,~~\nabla\cdot {\bf u}=0,&\hbox{in}~~\Omega,\\
{\bf u}|_{+}={\bf u}|_{-},&\hbox{on}~~\partial{D_{i}},~i=1,2,\\
e({\bf u})=0, &\hbox{in}~~D_{i},~i=1,2,\\
\int_{\partial{D}_{i}}\sigma[{\bf u},p]\cdot{\boldsymbol\psi}_{\alpha}
\nu=0
,&i=1,2,\alpha=1,\dots,6,\\
{\bf u}={\boldsymbol\varphi}, &\hbox{on}~~\partial{D}.
\end{cases}
\end{align}
Let $\varepsilon=0$, and recall \eqref{def_D0}, 
$$D_{1}^{0}:=\{x\in\mathbb R^{3}~\big|~ x+P_{1}\in D_{1}\},\quad~ D_{2}^{0}:=\{x\in\mathbb R^{3}~\big|~ x-P_{2}\in D_{2}\},$$ 
and set $\Omega^{0}:=D\setminus\overline{D_{1}^{0}\cup D_{2}^{0}}$. Let us define a linear and continuous functional of ${\boldsymbol\varphi}$:
\begin{align}\label{blowupfactor}
\tilde b_{j}^{*\alpha}[{\boldsymbol\varphi}]:=
\int_{\partial D_j^0}{\boldsymbol\psi}_\alpha\cdot\sigma[{\bf u}^*,p^*]\nu,\quad\alpha=1,\dots,6,
\end{align}
where $({\bf u}^*,p^*)$ verifies
\begin{align}\label{maineqn touch}
\begin{cases}
\nabla\cdot\sigma[{\bf u}^{*},p^{*}]=0,\quad\nabla\cdot {\bf u}^{*}=0,\quad&\hbox{in}\ \Omega^{0},\\
{\bf u}^{*}=\sum_{\alpha=1}^{6}C_{*}^{\alpha}{\boldsymbol\psi}_{\alpha},&\hbox{on}\ \partial D_{1}^{0}\cup\partial D_{2}^{0},\\
\int_{\partial{D}_{1}^{0}}{\boldsymbol\psi}_\alpha\cdot\sigma[{\bf u}^*,p^{*}]\nu+\int_{\partial{D}_{2}^{0}}{\boldsymbol\psi}_\alpha\cdot\sigma[{\bf u}^*,p^{*}]\nu=0,&\alpha=1,\dots,6,\\
{\bf u}^{*}={\boldsymbol\varphi},&\hbox{on}\ \partial{D},
\end{cases}
\end{align}
where the constants $C_{*}^{\alpha}$, $\alpha=1,\dots,6$, are uniquely determined by the solution $({\bf u}^*,p^*)$. We obtain a lower bound of $|\nabla{{\bf u}}|$ on the segment $\overline{P_{1}P_{2}}$.

\begin{theorem}\label{mainthm2}(Lower Bounds in 3D)
Assume that $D_1,D_2,D,\Omega$, and $\varepsilon$ are defined as above, and ${\boldsymbol\varphi}\in C^{2,\alpha}(\partial D;\mathbb R^3)$ for some $0<\alpha<1$. Let ${\bf u}\in H^1(D;\mathbb R^3)\cap C^1(\bar{\Omega};\mathbb R^3)$ and $p\in L^2(D)\cap C^0(\bar{\Omega})$ be a solution to \eqref{Stokessys} and \eqref{compatibility}. Then if $\tilde b_{1}^{*\alpha_0}[{\boldsymbol\varphi}]\neq0$ for some $\alpha_0<3$, then there exists  a sufficiently small $\varepsilon_0>0$, such that for $0<\varepsilon<\varepsilon_0$, 
\begin{equation*}
|\nabla{{\bf u}}(0',x_{3})|\geq \frac{\Big|\tilde b_1^{*\alpha_0}[{\boldsymbol\varphi}]\Big|}{C\varepsilon|\ln\varepsilon|}\|{\boldsymbol\varphi}\|_{C^{2,\alpha}(\partial D;\mathbb R^3)},\quad|x_{3}|\leq\varepsilon.
\end{equation*}
\end{theorem}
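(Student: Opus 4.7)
My plan is to follow the linear-decomposition technique of the upper-bound analysis in Section~\ref{sec2outline3D} and isolate a single dominant singular contribution on the segment $\overline{P_{1}P_{2}}$. Write
\[
{\bf u} = \sum_{\alpha=1}^{6}\bigl(C_{1}^{\alpha}{\bf v}_{1}^{\alpha}+C_{2}^{\alpha}{\bf v}_{2}^{\alpha}\bigr)+{\bf v}_{0},\qquad p = \sum_{\alpha=1}^{6}\bigl(C_{1}^{\alpha}q_{1}^{\alpha}+C_{2}^{\alpha}q_{2}^{\alpha}\bigr)+q_{0},
\]
where $({\bf v}_{i}^{\alpha},q_{i}^{\alpha})$ solves the Stokes system in $\Omega$ with boundary value $\boldsymbol\psi_{\alpha}$ on $\partial D_{i}$ and zero on the remaining components of $\partial\Omega$, and $({\bf v}_{0},q_{0})$ carries the datum $\boldsymbol\varphi$ on $\partial D$ with vanishing boundary value on both inclusions. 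The target lower bound will then follow from three ingredients: (a) a pointwise lower bound $|\partial_{3}{\bf v}_{1}^{\alpha_{0}}(0',x_{3})|\geq c/\varepsilon$ on the segment, (b) a coefficient bound $|C_{1}^{\alpha_{0}}-C_{2}^{\alpha_{0}}|\geq c\,|\tilde b_{1}^{*\alpha_{0}}[\boldsymbol\varphi]|/|\ln\varepsilon|$, and (c) an $O(1)$ bound on the residual contributions on the segment.

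The hypotheses $({\rm S_{H}})$ and $({\rm S_{\boldsymbol\varphi}})$, combined with uniqueness for \eqref{Stokessys} applied to the inversion $x\mapsto -x$, force $C_{2}^{\alpha}=-C_{1}^{\alpha}$ for the three translational modes $\alpha\in\{1,2,3\}$ and $C_{2}^{\alpha}=C_{1}^{\alpha}$ for the three rotational modes; the same reflection relates ${\bf v}_{2}^{\alpha}$ to ${\bf v}_{1}^{\alpha}$ through a pull-back by $-x$. Restricting to the segment and using the Poiseuille-type leading profile of ${\bf v}_{1}^{\alpha_{0}}$ from Subsection~\ref{subsec-constuction} (which is essentially linear in $x_{3}$ across the gap), the translational block collapses to $(C_{1}^{\alpha_{0}}-C_{2}^{\alpha_{0}})\,\partial_{3}{\bf v}_{1}^{\alpha_{0}}(0',x_{3})$ modulo an $O(1)$ remainder, and this construction also gives $|\partial_{3}{\bf v}_{1}^{\alpha_{0}}(0',x_{3})|\geq c/\varepsilon$ for $|x_{3}|\leq\varepsilon$. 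The gradient bound of Theorem~\ref{mainthm} applied to ${\bf v}_{0}$ and to the non-singular surviving modes (vertical translation and rotations) controls all other contributions by a universal constant, yielding (a) and (c).

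For (b), substitute the decomposition into the integral constraints of \eqref{Stokessys} and integrate by parts to obtain the $12\times 12$ linear system
\[
\sum_{j,\beta} a_{ij}^{\alpha\beta}\,C_{j}^{\beta} = -b_{i}^{\alpha},\qquad a_{ij}^{\alpha\beta} = 2\mu\!\int_{\Omega}e({\bf v}_{i}^{\alpha}):e({\bf v}_{j}^{\beta})\,dx,\quad b_{i}^{\alpha} = \int_{\partial D_{i}}\boldsymbol\psi_{\alpha}\cdot\sigma[{\bf v}_{0},q_{0}]\nu.
\]
The neck-region energy asymptotics for tangential translations in 3D give $a_{11}^{\alpha_{0}\alpha_{0}}\asymp|\ln\varepsilon|$ and, since ${\bf v}_{1}^{\alpha_{0}}+{\bf v}_{2}^{\alpha_{0}}$ is close to the constant $\boldsymbol\psi_{\alpha_{0}}$ in the gap, $a_{12}^{\alpha_{0}\alpha_{0}}\asymp -|\ln\varepsilon|$, so that $a_{11}^{\alpha_{0}\alpha_{0}}-a_{12}^{\alpha_{0}\alpha_{0}}\asymp|\ln\varepsilon|$. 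All cross-coupling entries between the $\alpha_{0}$-block and the vertical translation or rotational modes are $O(1)$. After the symmetry reduction the $\alpha_{0}$-equation becomes
\[
\bigl(a_{11}^{\alpha_{0}\alpha_{0}}-a_{12}^{\alpha_{0}\alpha_{0}}\bigr)\,C_{1}^{\alpha_{0}} = -\tfrac{1}{2}\,b_{1}^{\alpha_{0}}+O(1),
\]
and a compactness argument comparing ${\bf v}_{0}$ on $\Omega$ with the touching analogue identifies $b_{1}^{\alpha_{0}}\to\tilde b_{1}^{*\alpha_{0}}[\boldsymbol\varphi]$ as $\varepsilon\to 0$; this yields (b) and closes the proof.

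The delicate part is the coefficient analysis. Tracking the precise $|\ln\varepsilon|$ order of $a_{11}^{\alpha_{0}\alpha_{0}}-a_{12}^{\alpha_{0}\alpha_{0}}$ requires neck-region expansions of the auxiliary fields sharper than those used for the upper bound in Theorem~\ref{mainthm}, since both $a_{11}^{\alpha_{0}\alpha_{0}}$ and $-a_{12}^{\alpha_{0}\alpha_{0}}$ are individually of that order and one must quantify their subleading interaction. Moreover, identifying $\lim_{\varepsilon\to 0}b_{1}^{\alpha_{0}}$ with $\tilde b_{1}^{*\alpha_{0}}[\boldsymbol\varphi]$ requires a comparison between ${\bf v}_{0}$ on $\Omega$ and the touching solution ${\bf u}^{*}$ on $\Omega^{0}$ near the contact point, where $\partial\Omega^{0}$ loses Lipschitz regularity; handling this comparison robustly seems to be the main technical hurdle.
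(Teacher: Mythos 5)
Your skeleton is the same as the paper's (decomposition into the ${\bf u}_i^{\alpha}$, symmetry reduction $C_2^{\alpha}=-C_1^{\alpha}$ for translations and $C_2^{\alpha}=C_1^{\alpha}$ for rotations, the $\varepsilon^{-1}$ lower bound for $\partial_{x_3}{\bf v}_1^{\alpha_0}$ on the segment, and a Cramer-type lower bound for $|C_1^{\alpha_0}-C_2^{\alpha_0}|$), but ingredient (b) has a genuine gap. Your reduced equation $(a_{11}^{\alpha_0\alpha_0}-a_{12}^{\alpha_0\alpha_0})C_1^{\alpha_0}=-\tfrac12 b_1^{\alpha_0}+O(1)$ cannot yield the claimed bound: the terms you sweep into the $O(1)$ — chiefly the couplings $\sum_{\alpha\geq4}C_2^{\alpha}\bigl(a_{11}^{\alpha\alpha_0}+a_{21}^{\alpha\alpha_0}\bigr)$ with the rotational modes — are of exactly the same order as the datum $b_1^{\alpha_0}$ itself, so an identity whose right-hand side is only known up to an additive constant gives no lower bound proportional to $|b_1^{\alpha_0}|$, let alone to $|\tilde b_1^{*\alpha_0}|$. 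The paper keeps these couplings inside the effective datum: the reduced system is $\sum_{\alpha=1}^{3}(C_1^{\alpha}-C_2^{\alpha})a_{11}^{\alpha\beta}=\tilde b_1^{\beta}$ with $\tilde b_1^{\beta}$ defined in \eqref{blowupfactor1} through the full background $({\bf u}_b,p_b)$ (which contains $\sum_{\alpha}C_2^{\alpha}({\bf u}_1^{\alpha}+{\bf u}_2^{\alpha})$), and the off-diagonal entries are killed by the exact symmetry identities \eqref{syma11}, not by mere $O(1)$ bounds.

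Relatedly, your identification $b_1^{\alpha_0}\to\tilde b_1^{*\alpha_0}[{\boldsymbol\varphi}]$ is wrong as stated: the functional of ${\bf u}_0$ alone converges to $b_1^{*\alpha_0}=\int_{\partial D_1^0}{\boldsymbol\psi}_{\alpha_0}\cdot\sigma[{\bf u}_0^*,p_0^*]\nu$, whereas the quantity in the theorem, $\tilde b_1^{*\alpha_0}$ in \eqref{blowupfactor}, is built from the full touching solution ${\bf u}^*$ of \eqref{maineqn touch}, which carries the rigid-motion background $\sum_{\alpha\geq4}C_*^{\alpha}{\bf u}^{*\alpha}$ (only $C_*^{\alpha}=0$ for $\alpha\leq3$ is forced by $({\rm S_H})$, $({\rm S_{{\boldsymbol\varphi}}})$); their difference consists precisely of the coupling terms you discarded, e.g. the pairing of ${\boldsymbol\psi}_5$ with ${\bf u}_1^{*1}$ survives all the reflections and is not zero in general. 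Closing this step requires the quantitative statement $|\tilde b_1^{\beta}-\tilde b_1^{*\beta}|\leq C/|\ln\varepsilon|$ of Proposition \ref{protildeb}, which in the paper rests on boundary convergence rates ${\bf u}_1^{\alpha}\to{\bf u}_1^{*\alpha}$ (Lemma \ref{lem difference v11}) and on $|C_2^{\alpha}-C_*^{\alpha}|\leq C/|\ln\varepsilon|$ (Proposition \ref{propC2D}); a soft compactness comparison of ${\bf u}_0$ with its touching analogue addresses the wrong functional and omits the $C_2^{\alpha}\to C_*^{\alpha}$ step entirely. Two smaller points: on the segment the vertical mode is singular, $|\nabla{\bf u}_1^3(0',x_3)|\sim\varepsilon^{-1}$ by Proposition \ref{propu133D}, and its contribution is $O(1)$ only thanks to $|C_1^3-C_2^3|\leq C\varepsilon$ (Proposition \ref{lemCialpha3D}), not because it is a non-singular mode; conversely, your concern about delicate cancellation in $a_{11}^{\alpha_0\alpha_0}-a_{12}^{\alpha_0\alpha_0}$ is unfounded, since Lemma \ref{lema114563D} gives $a_{21}^{\alpha_0\alpha_0}=-a_{11}^{\alpha_0\alpha_0}+O(1)$, so this difference is comparable to $|\ln\varepsilon|$ without any sharper neck expansion.
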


As a consequence, we show the point-wise upper bounds and lower bounds for  the Cauchy stress tensor $\sigma[{\bf u},p]$, see  Theorem \ref{mainthmsigma} in Section \ref{sec5}.

\subsection{Upper Bounds of $|\nabla{\bf u}|$ and $|p|$ in dimensions $d\geq4$}\label{subsection-upphigher}

\begin{theorem}\label{mainthmD4}(Upper Bounds in Higher Dimensions)
Assume that $D_1,D_2,D,\Omega$, and $\varepsilon$ are defined as in Subsection \ref{subsec1.1}, and ${\boldsymbol\varphi}\in C^{2,\alpha}(\partial D;\mathbb R^d)$ for some $0<\alpha<1$, $d\geq4$. Let ${\bf u}\in H^1(D;\mathbb R^d)\cap C^2(\bar{\Omega};\mathbb R^d)$ and $p\in L^2(D)\cap C^1(\bar{\Omega})$ be the solution to \eqref{sto} and \eqref{compatibility}. Then for sufficiently small $0<\varepsilon<1$, 
\begin{align*}
&|\nabla{{\bf u}}(x)|\leq C\frac{\varepsilon+|x'|}{(\varepsilon+|x'|^{2})^2}\|{\boldsymbol\varphi}\|_{C^{2,\alpha}(\partial D;\mathbb R^d)},\quad x\in \Omega_{R},\\
&\inf_{c\in\mathbb{R}}\|p+c\|_{C^0(\bar{\Omega}_{R})}\leq \frac{C}{\varepsilon^2}\|{\boldsymbol\varphi}\|_{C^{2,\alpha}(\partial D;\mathbb R^d)},
\end{align*}
\begin{align*}
|\nabla^2{{\bf u}}(x)|+|\nabla p(x)|\leq\,C\frac{\varepsilon+|x'|}{(\varepsilon+|x'|^{2})^{3}}\|{\boldsymbol\varphi}\|_{C^{2,\alpha}(\partial D;\mathbb R^d)},\quad~x\in\Omega_{R},
\end{align*}
and
$$\|{\bf u}\|_{C^{2}(\Omega\setminus\Omega_{R})}+\|p\|_{C^{1}(\Omega\setminus\Omega_{R})}\leq\,C\|{\boldsymbol\varphi}\|_{C^{2,\alpha}(\partial D;\mathbb R^d)},$$
where $C$ is a universal constant independent of $\varepsilon$. 
\end{theorem}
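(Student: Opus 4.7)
The proof parallels the three-dimensional argument of Theorem \ref{mainthm} but with sharper polynomial bounds: the logarithmic neck integrals that drove the $|\ln\varepsilon|$ factors in $d=3$ either become $\varepsilon$-independent or pick up different powers of $\varepsilon^{-1}$ when $d\ge 4$. First I would reduce to $\|{\boldsymbol\varphi}\|_{C^{2,\alpha}(\partial D;\mathbb R^d)}=1$ and decompose the solution as
\begin{align*}
{\bf u} = {\bf v}_0 + \sum_{i=1,2}\sum_{\alpha=1}^{d(d+1)/2} C_i^\alpha {\bf v}_i^\alpha, \qquad p = p_0 + \sum_{i=1,2}\sum_{\alpha=1}^{d(d+1)/2} C_i^\alpha p_i^\alpha,
\end{align*}
where $({\bf v}_i^\alpha,p_i^\alpha)$ solves the Stokes system in $\Omega$ with Dirichlet data $\boldsymbol\psi_\alpha$ on $\partial D_i$ and $0$ on $\partial D_{3-i}\cup \partial D$, and $({\bf v}_0,p_0)$ carries the boundary data ${\boldsymbol\varphi}$ with zero data on $\partial D_1\cup\partial D_2$. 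The rigidity condition $e({\bf u})=0$ in each $D_i$ forces ${\bf u}|_{D_i}=\sum_\alpha C_i^\alpha\boldsymbol\psi_\alpha$, which fixes the constants.

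Next I would construct explicit divergence-free approximations $(\bar{\bf v}_i^\alpha,\bar p_i^\alpha)$ in the neck $\Omega_{2R}$. For the tangential translations ($\alpha=1,\dots,d-1$) the linear interpolation along $x_d$ between $\boldsymbol\psi_\alpha$ on $\partial D_i$ and $0$ on $\partial D_{3-i}$ is already divergence-free; for the "squeeze" translation $e_d$ and for the rotations coupling $x_d$ with a tangential variable, a pressure-producing corrector must be added to restore incompressibility, exactly as in the three-dimensional construction. Setting $\delta(x'):=\varepsilon+h_1(x')+h_2(x')\sim\varepsilon+|x'|^2$ and using \eqref{h1h1}--\eqref{h1h14}, direct computation gives, for the worst auxiliary fields,
\begin{align*}
|\nabla\bar{\bf v}_i^\alpha(x)|\le C\tfrac{\varepsilon+|x'|}{\delta(x')^2},\quad |\bar p_i^\alpha(x)|\le \tfrac{C}{\delta(x')^2},\quad |\nabla^2\bar{\bf v}_i^\alpha|+|\nabla\bar p_i^\alpha|\le C\tfrac{\varepsilon+|x'|}{\delta(x')^3},
\end{align*}
with better bounds for the tangential translations. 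The residual ${\bf w}_i^\alpha:={\bf v}_i^\alpha-\bar{\bf v}_i^\alpha$ is then controlled by a dyadic rescaling argument: cut annular shells $\{|x'|\sim r\}$ for $r\in(\sqrt\varepsilon,R)$, rescale each shell to a unit-sized Stokes problem, and apply the Agmon--Douglis--Nirenberg and Solonnikov $W^{2,p}$ and $C^{1,\alpha}$ estimates to bound $\nabla{\bf w}_i^\alpha$ and $\nabla^2{\bf w}_i^\alpha$ by terms of lower order than the leading pieces above.

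Third, I would bound the constants $C_i^\alpha$. Testing \eqref{sto} against each ${\bf v}_j^\beta$ and invoking the equilibrium conditions in \eqref{sto} produces a linear system whose coefficient matrix has entries
\begin{align*}
a_{\alpha\beta}=2\mu\int_\Omega e({\bf v}_1^\alpha):e({\bf v}_1^\beta)\,dx,\qquad b_{\alpha\beta}=2\mu\int_\Omega e({\bf v}_1^\alpha):e({\bf v}_2^\beta)\,dx,
\end{align*}
whose asymptotics are computed from the neck via integrals of the form $\int_0^R \frac{r^{d-2}}{\delta(r)^k}\,dr$, which in $d\ge 4$ are polynomial in $\varepsilon^{-1}$ without logarithmic loss (in fact bounded for sufficiently large $d$). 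Inverting this near-diagonal system, with the antisymmetric combinations $C_1^\alpha-C_2^\alpha$ controlled by the smallness of the off-diagonal entries and with the right-hand side $\int_\Omega e({\bf v}_0):e({\bf v}_j^\beta)\,dx$ estimated by standard regularity of $({\bf v}_0,p_0)$, yields $\varepsilon$-independent bounds on $|C_i^\alpha|$. Plugging these into the decomposition and combining with the bounds for $\bar{\bf v}_i^\alpha$ and ${\bf w}_i^\alpha$ gives the claimed pointwise estimates for $\nabla{\bf u}$, $\nabla^2{\bf u}$, $p$ and $\nabla p$ in $\Omega_R$; the bounds outside the neck follow from interior Stokes Schauder theory away from the pinch.

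The principal obstacle, as already flagged in the remark after Theorem \ref{mainthm}, is the coupling between incompressibility and the squeeze-type auxiliary fields: the corrector required to enforce $\nabla\cdot\bar{\bf v}_i^\alpha=0$ forces $\bar p_i^\alpha$ to carry a $\delta(x')^{-2}$ singularity, which then propagates through both the balance system and the second-order estimates and is responsible for the one-order gap between $|\nabla{\bf u}|$ and $|p|$ near the pinch. Keeping the constants in the rescaled $W^{2,p}$ Stokes estimates uniformly independent of $\varepsilon$ and of the dyadic scale $r$, and re-evaluating the neck integrals in dimension $d\ge 4$ carefully enough to capture the correct powers of $\varepsilon^{-1}$ in the bounds for $p$ and $\nabla p$, is where the technical burden lies. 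The remainder is a careful but essentially mechanical adaptation of the three-dimensional computations to the new dimensional arithmetic.
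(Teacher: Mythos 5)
Your overall framework --- the decomposition \eqref{udecD4}, explicit velocity profiles plus pressure correctors in the neck, residual bounds, and then only $\varepsilon$-independent bounds on the $C_i^\alpha$ --- is the same as the paper's in Section \ref{prfthmD4}. However, two concrete problems remain. First, you assign the divergence correctors to the wrong families: for a tangential translation $\alpha\in\{1,\dots,d-1\}$ the naive profile $\boldsymbol\psi_\alpha\big(k(x)+\tfrac12\big)$ with $k=x_d/\delta(x')$ is \emph{not} divergence free (its divergence is $\partial_{x_\alpha}k=-\tfrac{2x_\alpha}{\delta(x')}k\neq0$), and the paper must add the corrector $\boldsymbol\psi_d x_\alpha\big(k^2-\tfrac14\big)$ as in \eqref{v1alpha}; it is the in-plane rotations $\alpha=d+1,\dots,\tfrac{d(d-1)}{2}+1$ that need no corrector. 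This is fixable, but as stated your construction and the associated pressure correctors are wrong.

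Second, and more seriously, your treatment of the residual is a genuine gap. You propose to bound ${\bf w}_i^\alpha={\bf u}_i^\alpha-{\bf v}_i^\alpha$ by rescaling dyadic shells $\{|x'|\sim r\}$ to unit-size Stokes problems and applying the Agmon--Douglis--Nirenberg/Solonnikov estimates, but those estimates control $\nabla{\bf w}_i^\alpha$ on a shell only in terms of the force ${\bf f}_i^\alpha$ \emph{and} the size of ${\bf w}_i^\alpha$ itself on a slightly larger shell, and the latter is exactly what is not known a priori; since ${\bf w}_i^\alpha$ vanishes only on the top and bottom boundaries, bounding it by $\delta\|\nabla{\bf w}_i^\alpha\|$ is circular. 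The paper supplies this missing input in two steps: a global energy bound $\int_\Omega|\nabla{\bf w}_i^\alpha|^2\le C$ (Lemmas \ref{lem3.0}, \ref{lem_energyw13}, \ref{lem3.4}), whose proof for the squeeze mode and the $x_d$-coupled rotations requires writing the residual force as an exact $\partial_{x_d}$-derivative of a bounded polynomial in $x_d$ and integrating by parts in $x_d$ (integration by parts in $x'$ fails because $\int_\Omega|\nabla{\bf v}_1^d|^2$ blows up), and then a Caccioppoli-type iteration as in Lemma \ref{lem3.1} down to scale $\delta(z')$ before invoking the local $W^{2,p}$ estimates; your proposal contains neither ingredient. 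Finally, inverting the balance system using ``smallness of the off-diagonal entries'' to control $C_1^\alpha-C_2^\alpha$ is not available for $d\ge4$ (the paper explicitly leaves such smallness open); fortunately only $|C_i^\alpha|\le C$ is needed for the stated bounds, and this is obtained by the variational/trace argument of \cite{BLL2}, so that part of your argument should be replaced rather than repaired.
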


\begin{remark}
Here we would like to point out the presence of the pressure term $p$ makes the construction of auxiliary functions constructed in Section \ref{sec2outline3D} below more involved. This is an essential difficulty compared to the case in Lam\'{e} system. Moreover, the maximum of the upper bound of $|\nabla{\bf u}|$  in Theorem \ref{mainthmD4} is of order $\varepsilon^{-3/2}$, which is larger than $\varepsilon^{-1}$ obtained in \cite[Theorem 1.2]{BLL2}. 
\end{remark}

\begin{remark}
The estimates for $C_{1}^{\alpha}-C_{2}^{\alpha}$, \eqref{estC1121} below,  play an important role in proving the lower bounds of the gradient; see the proof of Theorem \ref{mainthm2} in Section \ref{sec5} for the details. However, how to prove a similar smallness estimate as in \eqref{estC1121} is quite challenging in dimensions at least four. We will consider this problem in the near future.  
\end{remark}

The rest of this paper is organized as follows: In Section \ref{sec2outline3D}, we present our main idea to establish the gradient estimates for the solution of Stokes system \eqref{Stokessys} in dimension three. We explain the main difficulties needed to overcome in the course of adapting the iteration approach and list the ingredients to prove Theorem \ref{mainthm}. A sketched proof is given in the end of Section \ref{sec2outline3D}.  In Section \ref{sec_estimate3D}, for each auxiliary function ${\bf v}_{i}^{\alpha}$, we calculate the concrete estimates required for the iteration process, then prove the estimates for ${\bf u}_{i}^{\alpha}$ listed in Section \ref{sec2outline3D}.  Following the method outlined in Section \ref{sec2outline3D},  the construction of the auxiliary functions and the main ingredients of the proof are presented there. We prove Theorem \ref{mainthm2} for the lower bounds of $|\nabla{\bf u}|$ in Section \ref{sec5}, where we also show the point-wise upper bounds and lower bounds for  the Cauchy stress tensor $\sigma[{\bf u},p]$ in Theorem \ref{mainthmsigma}. Finally, the proof of Theorem \ref{mainthmD4} is given in Section \ref{prfthmD4}.

\section{Main Ingredients and Main Idea  of the Proof}\label{sec2outline3D}
In this section, we will present the main ingredients and show the main idea of the proof of Theorem \ref{mainthm}, the upper bounds of $|\nabla{\bf u}|$ and $|p|$ in three dimensions. 

\subsection{Construction of Auxiliary Functions and Main Estimates}\label{subsec-constuction}

Recall that a basis of $\Psi$ in dimension three is
$${\boldsymbol\psi}_{1}=\begin{pmatrix}
1 \\
0\\
0
\end{pmatrix},
{\boldsymbol\psi}_{2}=\begin{pmatrix}
0\\
1\\
0
\end{pmatrix},
{\boldsymbol\psi}_{3}=\begin{pmatrix}
0\\
0\\
1
\end{pmatrix},
{\boldsymbol\psi}_{4}=\begin{pmatrix}
x_{2}\\
-x_{1}\\
0
\end{pmatrix},
{\boldsymbol\psi}_{5}=\begin{pmatrix}
x_{3}\\
0\\
-x_{1}
\end{pmatrix},
{\boldsymbol\psi}_{6}=\begin{pmatrix}
0\\
x_{3}\\
-x_{2}
\end{pmatrix}.
$$
First, it is important to note that problem \eqref{Stokessys} has free boundary value feature. Although the third line in \eqref{Stokessys}, $e({\bf u}) =0$ in $D_i$, implies ${\bf u}$ is linear combination of ${\boldsymbol\psi}_{\alpha}$,
\begin{equation}\label{introC}
{\bf u}=\sum_{\alpha=1}^{6}C_{i}^{\alpha}{\boldsymbol\psi}_{\alpha},\quad\mbox{in}~D_i,\quad i=1,2,
\end{equation}
these twelve $C_{i}^{\alpha}$ are free constants, to be dealt with. By the continuity of the transmission condition on $\partial{D}_{i}$, we decompose the solution of \eqref{Stokessys} in $\Omega$ as follows:
\begin{align}\label{udecom}
{\bf u}(x)&=\sum_{i=1}^{2}\sum_{\alpha=1}^{6}C_i^{\alpha}{\bf u}_{i}^{\alpha}(x)+{\bf u}_{0}(x),\quad~ \mbox{and}~
p(x)=\sum_{i=1}^{2}\sum_{\alpha=1}^{6}C_i^{\alpha}p_{i}^{\alpha}(x)+p_{0}(x),
\end{align}
where ${\bf u}_{i}^{\alpha},{\bf u}_{0}\in{C}^{2}(\overline{\Omega};\mathbb R^3),~p_{i}^{\alpha}, p_0\in{C}^{1}(\overline{\Omega})$, respectively, satisfy
\begin{equation}\label{equ_v1}
\begin{cases}
\nabla\cdot\sigma[{\bf u}_{i}^\alpha,p_{i}^{\alpha}]=0,\quad\nabla\cdot {\bf u}_{i}^{\alpha}=0,&\mathrm{in}~\Omega,\\
{\bf u}_{i}^{\alpha}={\boldsymbol\psi}_{\alpha},&\mathrm{on}~\partial{D}_{i},\\
{\bf u}_{i}^{\alpha}=0,&\mathrm{on}~\partial{D_{j}}\cup\partial{D},~j\neq i,
\end{cases}i=1,2,
\end{equation}
and
\begin{equation}\label{equ_v3}
\begin{cases}
\nabla\cdot\sigma[{\bf u}_{0},p_0]=0,\quad\nabla\cdot {\bf u}_{0}=0,&\mathrm{in}~\Omega,\\
{\bf u}_{0}=0,&\mathrm{on}~\partial{D}_{1}\cup\partial{D_{2}},\\
{\bf u}_{0}={\boldsymbol\varphi},&\mathrm{on}~\partial{D}.
\end{cases}
\end{equation}
Note that $|\nabla{\bf u}_0|$ and $|p_0|$ are bounded by a constant independent of $\varepsilon$; see Proposition \ref{prop1.7} below. Therefore, in order to establish the estimates of $|\nabla{\bf u}|$ and $|p|$, it suffices to prove the gradient estimates of these ${\bf u}_{i}^{\alpha}$ and determine  free constants $C_{i}^{\alpha}$ in \eqref{udecom}. This is quite involved since we need to overcome the difficulty caused by the presence of $p$ and the divergence free condition. To express our idea clearly, we assume for simplicity that $h_{1}$ and $h_{2}$ are quadratic and symmetric with respect to the plane $\{x_{3}=0\}$, say, $h_1(x')=h_2(x')=\frac{1}{2}|x'|^2$ for $|x'|\leq 2R$, (see discussions after the proof of Proposition \ref{propu113D} in Subsection \ref{subsec3.1}). It is well-known that the Stokes system is elliptic in the sense of Douglis-Nirenberg, the regularity and estimates can be obtained from \cite{ADN1964,Solonni1966} provided the global energy is bounded. So in the sequel we will only deal with the estimates in the narrow region $\Omega_{R}$, except the boundedness of the global energy in $\Omega$. 

Next we introduce the Keller-type function $k(x)\in C^{3}(\mathbb{R}^{3})$,
satisfying $k(x)=\frac{1}{2}$ on $\partial D_{1}$,  $k(x)=-\frac{1}{2}$ on $\partial D_{2}$, $k(x)=0$ on $\partial D$, especially,
\begin{equation}\label{def_kx}
k(x)=\frac{x_{3}}{\delta(x')},\quad\hbox{in}\ \Omega_{2R},
\end{equation}
and $\|k(x)\|_{C^{3}(\mathbb{R}^{3}\backslash\Omega_{R})}\leq C$, where 
\begin{align}\label{55}
\delta(x'):=\epsilon+h_{1}(x')+h_{2}(x'),\quad\mbox{for}\, |x'|\leq 2R.
\end{align}
Clearly,
\begin{align}\label{partial_kx}
\partial_{x_j}k(x)=-\frac{2x_{j}}{\delta(x')}k(x),\quad\,j=1,2,\quad
\partial_{x_3}k(x)
=\frac{1}{\delta(x')},\quad\hbox{in}\ \Omega_{2R}.
\end{align}

We begin by estimating ~$\nabla{\bf u}_{1}^{\alpha}$, $\alpha=1,2$. We use the Keller-type function \eqref{def_kx} to construct  an auxiliary function ${\bf v}_{1}^{\alpha}\in C^{2}(\Omega;\mathbb R^3)$, such that ${\bf v}_{1}^{\alpha}={\bf u}_{1}^{\alpha}={\boldsymbol\psi}_{\alpha}$ on $\partial{D}_{1}$ and ${\bf v}_{1}^{\alpha}={\bf u}_{1}^{\alpha}=0$ on $\partial{D}_{2}\cup\partial{D}$, and specifically, 
\begin{align}\label{v1alpha}
{\bf v}_{1}^{\alpha}=\boldsymbol\psi_{\alpha}\Big(k(x)+\frac{1}{2}\Big)+\boldsymbol\psi_{3}x_{\alpha}\Big(k^2(x)-\frac{1}{4}\Big),\quad\hbox{in}\ \Omega_{2R},
\end{align}
and 
\begin{equation*}
\|{\bf v}_{1}^{\alpha}\|_{C^{2}(\Omega\setminus\Omega_{R})}\leq\,C.
\end{equation*}
Clearly,  by a direct calculation, it is easy to check that such ${\bf v}_{1}^{\alpha}$ satisfy 
\begin{equation*}
\nabla\cdot{\bf v}_{1}^{\alpha}=0,\quad\alpha=1,2,\quad\mbox{in}~\Omega_{2R}.
\end{equation*} 
This is one of the key points of our constructions. Compared with \cite{BLL2}, here we use the additional terms $\boldsymbol\psi_{3}x_{\alpha}(k^2(x)-\frac{1}{4})$ to modify $\boldsymbol\psi_{\alpha}(k(x)+\frac{1}{2})$ so that the modified functions ${\bf v}_{1}^{\alpha}$ become divergence free. This turns out to be a new difficulty here and in the  subsequent constructions. However, this is only the first step. By a calculation, the first derivatives of ${\bf v}_{i}^{\alpha}$ show the singularity of order $\frac{1}{\delta(x')}$, that is,
\begin{align}\label{v11-1d}
\frac{1}{C\delta(x')}\leq|\nabla{\bf v}_{i}^{\alpha}|(x)\leq\,\frac{C}{\delta(x')}.
\end{align}
In the process of employing the energy iteration approach developed in \cite{LLBY,BLL,BLL2} to prove \eqref{v11-1d} can capture all the singular terms of $|\nabla{\bf u}_{i}^{\alpha}|$, we have to find an appropriate $\bar{p}_1^{\alpha}$ to make $|\mu\Delta{\bf v}_{1}^{\alpha}-\nabla\bar{p}_1^{\alpha}|$ as small as possible in $\Omega_{2R}$. This is another crucial issue. Indeed, their differences
\begin{align*}
{\bf w}_{1}^{\alpha}:={\bf u}_{1}^{\alpha}-{\bf v}_{1}^{\alpha},\quad\mbox{and}~ q_{1}^{\alpha}:=p_{1}^{\alpha}-\bar{p}_{1}^{\alpha},
\end{align*}
verify the following general boundary value problem
\begin{align}\label{w1}
\begin{cases}
-\mu\Delta{\bf w}+\nabla q={\bf f}:=\mu\Delta{\bf v}-\nabla\bar{p},\quad&\mathrm{in}\,\Omega,\\
\nabla\cdot {\bf w}=0\quad&\mathrm{in}\,\Omega_{2R},\\
\nabla\cdot {\bf w}=-\nabla\cdot {\bf v}\quad&\mathrm{in}\,\Omega\setminus\Omega_R,\\
{\bf w}=0,\quad&\mathrm{on}\,\partial\Omega.
\end{cases}
\end{align}
By an observation, we choose $\bar{p}_1^{\alpha}\in C^{1}(\Omega)$ such that
\begin{equation}\label{defp113D}
\bar{p}_1^{\alpha}=\frac{2\mu x_{\alpha}}{\delta(x')}k(x),\quad\mbox{in}~\Omega_{2R},
\end{equation}
and  $\|\bar{p}_1^{\alpha}\|_{C^{1}(\Omega\setminus\Omega_{R})}\leq C$. It turns out that $|\mu\Delta{\bf v}_{1}^{\alpha}-\nabla\bar{p}_1^{\alpha}|$ is exactly smaller than $|\mu\Delta{\bf v}_{1}^{\alpha}|$ itself. This choice of $\bar{p}_1^{\alpha}$ enables us to adapt the iteration approach in \cite{BLL2} to work for the Stokes flow, and to prove  that such $\nabla {\bf v}_{1}^{\alpha}$ and $\bar{p}_1^{\alpha}$ are the main singular terms of $\nabla {\bf u}_{1}^{\alpha}$ and $p_1^{\alpha}$. This is an essential difference with that in \cite{BLL2}. We would like to remark that here the choice of $\bar{p}_1^{\alpha}$ were inspired by the recent paper \cite{AKKY}, where the authors also established the estimates of $p$ in dimension two for circular inclusions $D_{1}$ and $D_{2}$. 

It is easy to calculate that
$$|\bar{p}_1^{\alpha}|\leq\frac{C|x'|}{\delta(x')},\quad\mbox{and}~\partial_{x_3}({\bf v}_{1}^{\alpha})^{(\alpha)}=\frac{1}{\delta(x')},\quad\mbox{in}~\Omega_{2R},$$
while, the other terms of the first order derivatives of ${\bf v}_{1}^{\alpha}$ can be controlled by $C(1+\frac{|x'|}{\delta(x')})$. By appropriate energy estimates of $|{\bf f}|$ and ${\bf w}$, such as  Lemma \ref{lem3.0} and Lemma \ref{lem3.1}, the former for the boundedness of the global energy of ${\bf w}$ and the latter for the estimates of its local energy in $\Omega_{\delta}(z')$ obtained by using the iteration technique, we can prove that the differences $|\nabla({\bf u}_{i}^{\alpha}-{\bf v}_{i}^{\alpha})|$, $\alpha=1,2$, are of order $O(1)$, see Proposition \ref{propu113D} below.

Similarly, for $i=2$, we only need to replace $k(x)=\frac{x_{3}}{\delta(x')}$ by $\tilde{k}(x)=\frac{-x_{3}}{\delta(x')}$ in $\Omega_{2R}$, which satisfies $\tilde{k}(x)+\frac{1}{2}=1$ on $\partial{D}_{2}$ and $\tilde{k}(x)+\frac{1}{2}=0$ on $\partial{D}_{1}$, then use $\tilde{k}$ to construct the corresponding ${\bf v}_{2}^{\alpha}$ and $\bar{p}_2^{\alpha}$. So, in the sequel, the assertions hold for both $i=1$ and $i=2$, but we only prove the case for $i=1$, and omit the case for $i=2$. 

Let us denote
$$\Omega_{\delta}(x'):=\left\{(y',y_{d})\big| -\frac{\varepsilon}{2}-h_{2}(y')<y_{d}
<\frac{\varepsilon}{2}+h_{1}(y'),\,|y'-x'|<\delta \right\},$$
for $|x'|\leq\,R$, where $\delta:=\delta(x')$ is defined by \eqref{55}. Define 
\begin{equation}\label{defqialpha} (q_{i}^\alpha)_{R}:=\frac{1}{|\Omega_{R}\setminus\Omega_{R/2}|}\int_{\Omega_{R}\setminus\Omega_{R/2}}q_{i}^\alpha(x)\mathrm{d}x,
\end{equation}
which is a constant independent of $\varepsilon$. The following estimates hold:

\begin{prop}\label{propu113D}
Let ${\bf u}_{i}^{\alpha}\in{C}^{2}(\Omega;\mathbb R^3),~p_{i}^{\alpha}\in{C}^{1}(\Omega)$ be the solution to \eqref{equ_v1}, $i,\alpha=1,2$. Then there holds
\begin{equation*}
\|\nabla({\bf u}_{i}^{\alpha}-{\bf v}_{i}^{\alpha})\|_{L^{\infty}(\Omega_{\delta/2}(x'))}\leq C,\quad \,x\in\Omega_{R},
\end{equation*}
and 
\begin{equation*}
\|\nabla^2({\bf u}_{i}^{\alpha}-{\bf v}_{i}^{\alpha})\|_{L^{\infty}(\Omega_{\delta/2}(x'))}+\|\nabla q_i^\alpha\|_{L^{\infty}(\Omega_{\delta/2}(x'))}\leq\frac{C}{\delta(x')},\quad \,x\in\Omega_{R}.
\end{equation*}
Consequently, in view of \eqref{v11-1d} and \eqref{defp113D},
\begin{align*}
\frac{1}{C\delta(x')}\leq|\nabla {\bf u}_{i}^{\alpha}(x)|\leq \frac{C}{\delta(x')},\quad|\nabla^2 {\bf u}_{i}^{\alpha}(x)|\leq C\left(\frac{1}{\delta(x')}+\frac{|x'|}{\delta^2(x')}\right),\quad x\in\Omega_{R},
\end{align*}
and 
\begin{align*}
|p_{i}^{\alpha}(x)-(q_{i}^\alpha)_{R}|\leq\frac{C}{\varepsilon},\quad|\nabla p_{i}^{\alpha}(x)|\leq C\left(\frac{1}{\delta(x')}+\frac{|x'|}{\delta^2(x')}\right),\quad x\in\Omega_{R}.
\end{align*}
\end{prop}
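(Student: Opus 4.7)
The plan is to introduce the error pair ${\bf w}:={\bf u}_1^\alpha-{\bf v}_1^\alpha$ and $q:=p_1^\alpha-\bar{p}_1^\alpha$, which satisfy the Stokes-type system \eqref{w1} with forcing ${\bf f}=\mu\Delta{\bf v}_1^\alpha-\nabla\bar{p}_1^\alpha$. The first step, which is the whole point of the choice \eqref{defp113D}, is a pointwise computation in $\Omega_{2R}$: differentiating \eqref{v1alpha} and \eqref{defp113D} and using \eqref{partial_kx}, one verifies that the most singular contributions of $\mu\Delta{\bf v}_1^\alpha$ and $\nabla\bar{p}_1^\alpha$ cancel, leaving $|{\bf f}(x)|\le C(1+|x'|/\delta(x'))$ in $\Omega_{2R}$, rather than the $C/\delta^2(x')$ each term exhibits individually. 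One also uses $\nabla\cdot{\bf v}_1^\alpha=0$ in $\Omega_{2R}$ (the main purpose of the corrector $\boldsymbol\psi_3 x_\alpha(k^2-\tfrac14)$ in \eqref{v1alpha}), so that the inhomogeneous divergence in the third line of \eqref{w1} is supported away from the singular region and can be absorbed by a smooth, uniformly bounded Bogovskii-type correction.

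Next I would deploy the local--global energy iteration of \cite{BLL,BLL2,LLBY}. A global $H^1$ bound $\int_\Omega|\nabla{\bf w}|^2\le C$ (Lemma \ref{lem3.0}) is obtained by testing \eqref{w1} against the solenoidal part of ${\bf w}$ and using the smallness of ${\bf f}$ proved above. For $|z'|\le R$, set
\begin{equation*}
F(t):=\int_{\Omega_t(z')}|\nabla{\bf w}|^2\,\mathrm{d}x,\qquad \delta(z')\le t\le R,
\end{equation*}
and derive a Caccioppoli-type inequality of the form $F(s)\le C(t-s)^{-2}F(t)+C(t-s)\cdot(\text{controlled source})$ by multiplying \eqref{w1} by $\eta^2{\bf w}$ with a cut-off $\eta$, integrating by parts, and handling the problematic term $\int q\,\nabla\cdot(\eta^2{\bf w})$ by replacing $q$ with $q-(q)_{\Omega_t\setminus\Omega_s}$ and invoking a Bogovskii correction supported on the shell. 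Iterating along a geometric sequence of $k\sim|\ln\delta|$ radii yields $F(\delta)\le C\delta$ (Lemma \ref{lem3.1})---the scaling expected of a gradient of unit size on a slab of thickness $\delta$ and cross-section $\delta^2$.

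With the local energy in hand, rescale $\Omega_{\delta/2}(x')$ by $y=((x'-z')/\delta,\,x_3/\delta)$ to a reference domain of unit size, so that \eqref{w1} becomes a uniformly elliptic Stokes system with $L^\infty$-bounded forcing. The Agmon--Douglis--Nirenberg and Solonnikov theory \cite{ADN1964,Solonni1966} then converts the $L^2$ control into the pointwise bounds $\|\nabla{\bf w}\|_{L^\infty(\Omega_{\delta/2}(x'))}\le C$ and $\|\nabla^2{\bf w}\|_{L^\infty(\Omega_{\delta/2}(x'))}+\|\nabla q\|_{L^\infty(\Omega_{\delta/2}(x'))}\le C/\delta(x')$, which are the two central estimates of the proposition. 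Combining these with the explicit bounds $|\nabla{\bf v}_1^\alpha|\sim 1/\delta$, $|\nabla^2{\bf v}_1^\alpha|\le C(1/\delta+|x'|/\delta^2)$, and $|\nabla\bar{p}_1^\alpha|\le C(1/\delta+|x'|/\delta^2)$ recovers the consequences stated for $\nabla{\bf u}_1^\alpha$, $\nabla^2{\bf u}_1^\alpha$, and $\nabla p_1^\alpha$. The pointwise bound $|p_1^\alpha-(q_1^\alpha)_R|\le C/\varepsilon$ follows by integrating $\nabla p_1^\alpha$ along a path from a fixed point in $\Omega_R\setminus\Omega_{R/2}$ down to $x$, the worst contribution being $\int_0^R|x'|/\delta^2(x')\,\mathrm{d}|x'|\asymp 1/\varepsilon$.

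The principal obstacle is the pressure. Unlike the Lam\'e setting of \cite{BLL2}, $q$ does not enter a coercive energy form directly, so each localization step must be paired with a Bogovskii correction that restores solenoidality of the test function on the annular shell; otherwise the integration by parts produces an uncontrolled pressure--divergence coupling. The cancellation identity underlying the first step is equally delicate, since it relies on the compensator $\boldsymbol\psi_3 x_\alpha(k^2-\tfrac14)$ in \eqref{v1alpha}: without it both $\nabla\cdot{\bf v}_1^\alpha=0$ and the cancellation between $\mu\Delta{\bf v}_1^\alpha$ and $\nabla\bar{p}_1^\alpha$ fail simultaneously, and the iteration cannot close at the claimed order.
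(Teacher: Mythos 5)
Your architecture (error pair $({\bf w},q)$ solving \eqref{w1}, global energy bound, Caccioppoli iteration on $\Omega_t(z')$, rescaled ADN/Solonnikov estimates, then the triangle inequality with the explicit bounds on ${\bf v}_1^\alpha,\bar p_1^\alpha$) is the same as the paper's, but two of your quantitative claims are wrong and with the numbers as you state them the argument does not close. First, the claimed bound $|{\bf f}|\le C(1+|x'|/\delta(x'))$ is false. The cancellation produced by the choice \eqref{defp113D} is only $\mu\partial_{x_3x_3}({\bf v}_1^\alpha)^{(3)}-\partial_{x_3}\bar p_1^\alpha=0$; the $\alpha$-th component $({\bf f}_1^\alpha)^{(\alpha)}=\mu\Delta_{x'}k-\partial_{x_\alpha}\bar p_1^\alpha$ consists of two terms of size $|x_3|/\delta^2(x')$ which do not cancel (at $x'=0$ they add up to $-6\mu x_3/\varepsilon^2$, of order $1/\varepsilon$), so the correct bound is \eqref{fLinfty}, $|{\bf f}_1^\alpha|\le C/\delta(x')$. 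In particular $\|{\bf f}_1^\alpha\|_{L^2(\Omega_R)}^2$ is only $O(|\ln\varepsilon|)$, so your route to Lemma \ref{lem3.0} -- testing \eqref{w1} against (the solenoidal part of) ${\bf w}$ and invoking the ``smallness'' of ${\bf f}$ -- does not give $\int_\Omega|\nabla{\bf w}|^2\le C$. The paper's mechanism is different: it integrates by parts in $x'$ so that only the antiderivative-level quantities $\nabla_{x'}{\bf v}_1^\alpha$ and $\bar p_1^\alpha$, which are bounded by $C|x'|/\delta(x')$ and hence square-integrable, are paired with $\nabla{\bf w}$; this device (and, for the later cases $\alpha=3,5,6$, integration by parts in $x_3$) is the heart of the global energy lemma and is missing from your proposal.

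Second, your local energy claim is off by two powers and your Caccioppoli inequality lacks the factor that makes the iteration work. The correct bound is $\int_{\Omega_\delta(z')}|\nabla{\bf w}|^2\le C\delta^3(z')$ (Lemma \ref{lem3.1}, consistent with your own heuristic: unit gradient on a region of volume $\sim\delta\cdot\delta^2$), obtained from \eqref{iterating1}, whose energy term carries the crucial numerator $\delta^2(z')$ coming from Poincar\'e in the $x_3$-direction (since ${\bf w}$ vanishes on the top and bottom boundaries), together with the source bound \eqref{f1alpha}. The form you wrote, $C(t-s)^{-2}F(t)$ without the $\delta^2$, is not contractive for steps $s-t\sim\delta$, and the bound you assert, $F(\delta)\le C\delta$, is too weak: inserted into the rescaled estimate \eqref{W2pstokes} it gives only $\|\nabla{\bf w}\|_{L^\infty(\Omega_{\delta/2}(x'))}\le C\delta^{-3/2}\cdot\delta^{1/2}=C/\delta$, i.e. no improvement over $|\nabla{\bf v}_1^\alpha|$ itself, so neither $\|\nabla({\bf u}_1^\alpha-{\bf v}_1^\alpha)\|_{L^\infty}\le C$ nor the second-derivative and pressure estimates follow. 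The remaining ingredients (Bogovskii-type handling of the pressure in the localization, the rescaled ADN/Solonnikov step, and the path-integration bound for $|p_1^\alpha-(q_1^\alpha)_R|\le C/\varepsilon$) are fine and match the paper, but the two errors above leave the central estimates of the proposition unproved as written.
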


For $\alpha=3$, we seek ${\bf v}_{1}^{3}\in C^{2}(\Omega;\mathbb R^3)$ satisfying, in $\Omega_{2R}$,
\begin{align}\label{v133D}
{\bf v}_{1}^{3}=\boldsymbol\psi_{3}\Big(k(x)+\frac{1}{2}\Big)+\frac{3}{\delta(x')}\left(\sum_{\alpha=1}^{2}x_{\alpha}\boldsymbol\psi_{\alpha}+2x_{3}\boldsymbol\psi_{3}\Big(\frac{|x'|^{2}}{\delta(x')}-\frac{1}{3}\Big)\right)\Big(k^2(x)-\frac{1}{4}\Big),
\end{align}
and $\|{\bf v}_{1}^{3}\|_{C^{2}(\Omega\setminus\Omega_{R})}\leq\,C$. One can check that $\nabla\cdot{\bf v}_{1}^{3}=0$ in $\Omega_{2R}$, and
\begin{equation}\label{v13upper1}
|\nabla{\bf v}_{1}^{3}|\leq\,C\left(\frac{1}{\delta(x')}+\frac{|x'|}{\delta^{2}(x')}\right),\quad\mbox{in}~\Omega_{2R},
\end{equation}
and
\begin{equation}\label{v13lower}
|\nabla{\bf v}_{1}^{3}(0',x_{3})|\geq\,\frac{1}{C}\frac{1}{\delta(x')},\quad\mbox{if}~|x_{3}|\leq\frac{\varepsilon}{2}.
\end{equation}
Choose a $\bar{p}_1^3\in C^{1}(\Omega)$ such that
\begin{equation}\label{p133D}
\bar{p}_1^3=-\frac{3}{2}\frac{\mu}{\delta^2(x')}+\frac{18\mu}{\delta(x')}\left(\frac{|x'|^{2}}{\delta(x')}-\frac{1}{3}\right)k^{2}(x),\quad\mbox{in}~\Omega_{2R},
\end{equation}
and  $\|\bar{p}_1^3\|_{C^{1}(\Omega\setminus\Omega_{R})}\leq C$, to make $|\mu\Delta{\bf v}_{1}^{\alpha}-\nabla\bar{p}_1^{\alpha}|$ be as small as possible. 

\begin{prop}\label{propu133D}
Let ${\bf u}_{i}^{3}\in{C}^{2}(\Omega;\mathbb R^3),~p_{i}^{3}\in{C}^{1}(\Omega)$ be the solution to \eqref{equ_v1}, then we have
\begin{equation*}
\|\nabla({\bf u}_{i}^{3}-{\bf v}_{i}^{3})\|_{L^{\infty}(\Omega_{\delta/2}(x'))}\leq\frac{C}{\sqrt{\delta(x')}},\quad x\in\Omega_{R},
\end{equation*}
and
\begin{equation*}
\|\nabla^2({\bf u}_{i}^{3}-{\bf v}_{i}^{3})\|_{L^{\infty}(\Omega_{\delta/2}(x'))}+\|\nabla q_i^3\|_{L^{\infty}(\Omega_{\delta/2}(x'))}\leq \frac{C}{\delta^{3/2}(x')},\quad x\in\Omega_{R}.
\end{equation*}
Consequently, from \eqref{v13upper1}--\eqref{p133D}, it follows that for $x\in\Omega_R$,
\begin{align*}
\frac{1}{C\delta(x')}\leq|\nabla {\bf u}_{i}^{3}(x)|\leq C\left(\frac{1}{\delta(x')}+\frac{|x'|}{\delta^2(x')}\right),~|\nabla^2 {\bf u}_{i}^{3}(x)|\leq C\left(\frac{1}{\delta^{2}(x')}+\frac{|x'|}{\delta^3(x')}\right),
\end{align*}
and
\begin{align*}
|p_{i}^{3}(x)-(q_{i}^3)_{\Omega_R}|\leq\frac{C}{\varepsilon^2},~|\nabla p_{i}^{3}(x)|\leq C\left(\frac{1}{\delta^{2}(x')}+\frac{|x'|}{\delta^3(x')}\right).
\end{align*}
\end{prop}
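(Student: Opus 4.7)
The plan is to follow exactly the same scheme as for Proposition \ref{propu113D} (the $\alpha=1,2$ case), setting
\[
{\bf w}_1^3 := {\bf u}_1^3 - {\bf v}_1^3,\qquad q_1^3 := p_1^3 - \bar p_1^3,
\]
so that $({\bf w}_1^3, q_1^3)$ solves the general Stokes boundary value problem \eqref{w1} with forcing ${\bf f} := \mu\Delta{\bf v}_1^3 - \nabla\bar p_1^3$. Because ${\bf v}_1^3$ was built in \eqref{v133D} precisely so that $\nabla\cdot{\bf v}_1^3 = 0$ in $\Omega_{2R}$, ${\bf w}_1^3$ is divergence free in $\Omega_{2R}$, and outside $\Omega_R$ the function $\nabla\cdot{\bf v}_1^3$ is smooth and bounded. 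The whole argument then reduces to controlling ${\bf w}_1^3$ in energy on each slice $\Omega_{\delta}(z')$ and converting the resulting $L^2$ bound into the pointwise estimates of the proposition by a blow-up rescaling.

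The decisive first step is to check, by direct differentiation of \eqref{v133D} and \eqref{p133D} using \eqref{partial_kx}, that the leading $\delta^{-3}$ singularities of $\mu\Delta{\bf v}_1^3$ are cancelled term-by-term by those of $\nabla\bar p_1^3$. This cancellation is in fact what dictated the particular choice of $\bar p_1^3$, and after it is verified one is left with the improved pointwise bound
\[
|{\bf f}(x)| \leq C\Big(\tfrac{1}{\delta^2(x')} + \tfrac{|x'|}{\delta^3(x')}\Big),\qquad x\in\Omega_{2R},
\]
together with $\|{\bf f}\|_{L^\infty(\Omega\setminus\Omega_R)}\le C$. This is one order better in $\delta$ than the na\"\i ve Laplacian $|\mu\Delta{\bf v}_1^3|$, and it is exactly what the iteration can absorb in the $\alpha=3$ case.

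With ${\bf f}$ controlled in this way, I would next invoke the global energy estimate for the Stokes system (Lemma~\ref{lem3.0}) to obtain $\int_\Omega|\nabla{\bf w}_1^3|^2\,dx \leq C$ and a companion $L^2$ bound on $q_1^3 - (q_1^3)_R$. Then, for each $z'\in B'_R(0')$, running the Caccioppoli--iteration machinery of Lemma~\ref{lem3.1} with the new bound on ${\bf f}$ and with horizontal length scale $\delta(z')$ yields the local energy estimate
\[
\int_{\Omega_{\delta}(z')}|\nabla{\bf w}_1^3|^2\,dx \leq C\,\delta^{2}(z'),
\]
which is precisely the $\alpha=3$ counterpart of the $C\,\delta^{3}(z')$ bound one gets in the proof of Proposition \ref{propu113D}; the extra power of $\delta^{-1}$ on the right records the stronger singularity of ${\bf f}$ here and is the source of the $\delta^{-1/2}$ loss in the final estimate.

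Finally, to pass from local $L^2$ to the pointwise bounds, I perform the standard change of variables $y = (x-z)/\delta(z')$, which maps $\Omega_{\delta/2}(z')$ to a domain of uniformly bounded geometry on which the rescaled Stokes system has smooth coefficients and bounded data. Applying the interior Schauder/$W^{2,\infty}$ theory of \cite{ADN1964,Solonni1966} on the rescaled domain and undoing the scaling produces
\[
\|\nabla{\bf w}_1^3\|_{L^\infty(\Omega_{\delta/2}(x'))} \leq \tfrac{C}{\sqrt{\delta(x')}},\qquad \|\nabla^2{\bf w}_1^3\|_{L^\infty(\Omega_{\delta/2}(x'))} + \|\nabla q_1^3\|_{L^\infty(\Omega_{\delta/2}(x'))}\leq \tfrac{C}{\delta^{3/2}(x')},
\]
and combining these with the explicit estimates \eqref{v13upper1}, \eqref{v13lower} on $\nabla{\bf v}_1^3$ and with a direct differentiation of $\bar p_1^3$ in \eqref{p133D} delivers all of the claimed bounds on $\nabla{\bf u}_1^3$, $\nabla^2{\bf u}_1^3$, $p_1^3 - (q_1^3)_R$, and $\nabla p_1^3$. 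The main obstacle is the algebraic cancellation underlying the ${\bf f}$-estimate: because the divergence-free constraint forces the extra correction $\tfrac{6x_3}{\delta}\bigl(\tfrac{|x'|^2}{\delta}-\tfrac13\bigr)(k^2-\tfrac14)\boldsymbol\psi_3$ in ${\bf v}_1^3$, the compensating pressure $\bar p_1^3$ must carry the matching term $\tfrac{18\mu}{\delta}\bigl(\tfrac{|x'|^2}{\delta}-\tfrac13\bigr)k^2$, and verifying that the combination kills \emph{all} $\delta^{-3}$ contributions of $\mu\Delta{\bf v}_1^3$ is technical and is the only place where the precise form of \eqref{v133D}--\eqref{p133D} is essential.
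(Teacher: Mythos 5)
Your overall skeleton (construct ${\bf w}_1^3={\bf u}_1^3-{\bf v}_1^3$, bound ${\bf f}_1^3=\mu\Delta{\bf v}_1^3-\nabla\bar p_1^3$, global energy, Caccioppoli iteration on $\Omega_\delta(z')$, then rescaled ADN/Schauder estimates as in \eqref{W2pstokes}--\eqref{Wmpstokes}) is the paper's scheme, but the quantitative core is wrong. The post-cancellation bound you assert, $|{\bf f}_1^3|\le C\bigl(\delta^{-2}+|x'|\delta^{-3}\bigr)$, is no improvement at all: it is exactly the size of $\mu\Delta{\bf v}_1^3$ itself, whose worst terms are $\partial_{x_3x_3}({\bf v}_1^3)^{(j)}=6x_j/\delta^3$ for $j=1,2$ and $|\partial_{x_3x_3}({\bf v}_1^3)^{(3)}|\le C/\delta^2$. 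The actual cancellations \eqref{estv133p13} and \eqref{estv131p13} kill the third-component term entirely and reduce the first two to order $|x'|/\delta^2$, so the correct bound is \eqref{estf13} (of order $|x'|/\delta^2$, the remaining horizontal second derivatives being controlled by \eqref{estv131p132}), a full power of $\delta$ better than what you state. This is not cosmetic: with your bound, $\int_{\Omega_s(z')}|{\bf f}_1^3|^2$ is too large and the iteration \eqref{iterating1} only yields $\int_{\Omega_\delta(z')}|\nabla{\bf w}_1^3|^2\le C$ rather than the needed $C\delta^2(z')$ (at $|z'|\sim\sqrt{\delta}$ the forcing contribution is of order $|z'|^2/\delta\sim 1$), and then \eqref{W2pstokes} gives only $\|\nabla{\bf w}_1^3\|_{L^\infty(\Omega_{\delta/2})}\lesssim\delta^{-3/2}$, not the claimed $\delta^{-1/2}$. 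So the chain of estimates you describe does not close with the ${\bf f}$-bound you assert; the proposition's conclusion requires the sharper cancellation, which is precisely the content of \eqref{L2_f13}.

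The second gap is the global energy bound $\int_\Omega|\nabla{\bf w}_1^3|^2\le C$, which you propose to get by ``invoking Lemma \ref{lem3.0}.'' That lemma is stated and proved only for $\alpha=1,2$, and its proof (integration by parts in $x'$, which needs $\nabla_{x'}{\bf v}$ and $\bar p$ square integrable on $\Omega_R$) breaks down for $\alpha=3$: by \eqref{v13-0} the relevant integrals are of size $\varepsilon^{-1}$. This is exactly why the paper proves the separate Lemma \ref{lem_energyw13}, rewriting $({\bf f}_1^3)^{(j)}$, $j=1,2$, as $\partial_{x_3}$ of an explicit polynomial in $x_3$ whose antiderivative is bounded in $L^2$, and integrating by parts in $x_3$ using ${\bf w}_1^3=0$ on $\partial D_1\cup\partial D_2$, keeping the $x'$-integration by parts only for the third component. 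Your proposal supplies neither this device nor any substitute verification of hypothesis \eqref{int-fw} of Lemma \ref{lemmaenergy}; and with your weaker ${\bf f}$-bound no such verification is possible, since even a Poincar\'e-in-$x_3$ argument then produces only $\int_\Omega|\nabla{\bf w}_1^3|^2\lesssim\varepsilon^{-1}$. The final rescaling/ADN step and the passage from $\|\nabla q_1^3\|_{L^\infty}$ to $|p_1^3-(q_1^3)_R|$ are fine in spirit, but the two estimates above are the substance of the proof and are missing or incorrect as stated.
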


Following the above idea, we can establish the estimates of ${\bf u}_{1}^{\alpha}$,  $\alpha=4,5,6$. Here we would like to remark that the construction of ${\bf v}_{1}^{4}$ is easy, while for the cases $\alpha=5,6$, it is even more complicated than that of ${\bf v}_{1}^{3}$, due to the appearance of $x_{3}$ in ${\boldsymbol\psi}^{5}$ and ${\boldsymbol\psi}^{6}$. For example, we here present ${\bf v}_{1}^{5}\in C^{2}(\Omega;\mathbb R^3)$, such that, in $\Omega_{2R}$,
\begin{align}\label{v15}
{\bf v}_{1}^{5}={\boldsymbol\psi}_{5}\left(k(x)+\frac{1}{2}\right)
+
\frac{3}{5}\Big(k^2(x)-\frac{1}{4}\Big)(F_1(x),G(x),H_1(x))^{\mathrm T},
\end{align}
and $\|{\bf v}_{1}^{5}\|_{C^{2}(\Omega\setminus\Omega_{R})}\leq\,C$, where 
\begin{equation}\label{defFGH}
\begin{split}
F_i(x)&=1-\frac{4x_{i}^2}{\delta(x')}-\frac{25}{3}x_{3}k(x),\quad i=1,2,\quad G(x)=-\frac{4x_{1}x_2}{\delta(x')},\\
H_i(x)&=8x_i\left(\frac{2}{3}-\frac{|x'|^{2}}{\delta(x')}\right)k(x)-10x_ix_3k^2(x),\quad i=1,2.
\end{split}
\end{equation}
We choose $\bar{p}_1^5\in C^{1}(\Omega)$ such that
\begin{equation*}
\bar{p}_1^5=\frac{6\mu}{5}\frac{ x_1}{\delta^2(x')}+\frac{72\mu}{5}\frac{ x_1}{\delta(x')}\left(\frac{2}{3}-\frac{|x'|^{2}}{\delta(x')}\right)k^{2}(x),\quad\mbox{in}~\Omega_{2R},
\end{equation*}
and $
\|\bar{p}_1^5\|_{C^{1}(\Omega\setminus\Omega_{R})}\leq C$. By a straightforward calculation, it is not difficult to check the divergence free condition. However, more new techniques are needed to prove the boundedness of the global energy of ${\bf u}_{1}^{5}-{\bf v}_{1}^{5}$.  More details can be found in Section \ref{sec_estimate3D}. The estimates of $|\nabla{\bf u}_{1}^{\alpha}|$,  $\alpha=4,5,6$, are obtained as follows: 

\begin{prop}\label{propu4563D}
Let ${\bf u}_{i}^{\alpha}\in{C}^{2}(\Omega;\mathbb R^3),~p_{i}^{\alpha}\in{C}^{1}(\Omega)$ be the solution to \eqref{equ_v1}, $\alpha=4,5,6$, then we have
\begin{equation*}
\|\nabla({\bf u}_{i}^{\alpha}-{\bf v}_{i}^{\alpha})\|_{L^{\infty}(\Omega_{\delta/2}(x'))}\leq
C,\quad x\in\Omega_{R},
\end{equation*}
\begin{equation*}
\|\nabla^2({\bf u}_{i}^{\alpha}-{\bf v}_{i}^{\alpha})\|_{L^{\infty}(\Omega_{\delta/2}(x'))}+\|\nabla q_i^\alpha\|_{L^{\infty}(\Omega_{\delta/2}(x'))}\leq 
\begin{cases}
\frac{C}{\sqrt{\delta(x')}},&\alpha=4,\\
\frac{C}{\delta(x')},&\alpha=5,6,
\end{cases}\quad x\in\Omega_{R}.
\end{equation*}
Consequently, for $x\in\Omega_{R}$,
\begin{align*}
|\nabla {\bf u}_{i}^{\alpha}(x)|\leq 
\begin{cases}
C\left(\frac{|x'|}{\delta(x')}+1\right),&\alpha=4,\\
\frac{C}{\delta(x')},&\alpha=5,6,
\end{cases}~
|\nabla^2 {\bf u}_{i}^{\alpha}(x)|\leq
\begin{cases}
\frac{C}{\delta(x')},&\alpha=4,\\
\frac{C}{\delta^{2}(x')},&\alpha=5,6,
\end{cases}
\end{align*}
and
\begin{align*}
|p_{i}^{\alpha}(x)-(q_{i}^\alpha)_{R}|\leq
\begin{cases}
\frac{C}{\sqrt{\varepsilon}},&\alpha=4,\\
\frac{C}{\varepsilon^{3/2}},&\alpha=5,6,
\end{cases} ~
|\nabla p_{i}^{\alpha}(x)|\leq
\begin{cases}
\frac{C}{\sqrt{\delta(x')}},&\alpha=4,\\
\frac{C}{\delta^{2}(x')},&\alpha=5,6.
\end{cases} 
\end{align*}
\end{prop}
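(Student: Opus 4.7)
The plan is to run, for each $\alpha = 4, 5, 6$, exactly the same three-step program that already handled $\alpha = 1, 2$ in Proposition \ref{propu113D} and $\alpha = 3$ in Proposition \ref{propu133D}: decompose ${\bf u}_i^\alpha = {\bf v}_i^\alpha + {\bf w}_i^\alpha$, $p_i^\alpha = \bar p_i^\alpha + q_i^\alpha$ so that the remainder solves \eqref{w1} with source ${\bf f} := \mu\Delta{\bf v}_i^\alpha - \nabla\bar p_i^\alpha$; bound the global Dirichlet energy of ${\bf w}_i^\alpha$ on $\Omega$; and iterate the local energy estimate (Lemma \ref{lem3.1}) on the scaled cylinders $\Omega_\delta(z')$. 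Rescaling each $\Omega_\delta(z')$ to a unit domain and appealing to ADN Stokes regularity \cite{ADN1964,Solonni1966} then upgrades the $L^2$ information on $\nabla{\bf w}_i^\alpha$ to the pointwise bounds on $\nabla{\bf w}_i^\alpha$, $\nabla^2{\bf w}_i^\alpha$ and $\nabla q_i^\alpha$ asserted in the proposition. The stated consequences for ${\bf u}_i^\alpha$ and $p_i^\alpha$ then follow from the triangle inequality together with the explicit size of ${\bf v}_i^\alpha$ and $\bar p_i^\alpha$.

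The preparatory calculation I would do first, separately for $\alpha = 4, 5, 6$, is an honest differentiation of ${\bf v}_i^\alpha$ via \eqref{partial_kx}: verify $\nabla\cdot{\bf v}_i^\alpha = 0$ in $\Omega_{2R}$, and compute ${\bf f}$. The explicit shape of ${\bf v}_1^5$ in \eqref{v15}--\eqref{defFGH}, and the analogous formula for ${\bf v}_1^6$, is chosen precisely so that the leading $\delta(x')^{-2}$-singularities of $\mu\Delta{\bf v}_i^\alpha$ cancel against $\nabla\bar p_i^\alpha$; what survives is a lower-order remainder for which I expect a pointwise bound of the form $|{\bf f}(x)| \leq C(\delta(x')^{-1}+|x'|\delta(x')^{-2})$ for $\alpha = 5, 6$, and an even smaller bound for $\alpha = 4$. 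These pointwise bounds on ${\bf f}$, together with the elementary calculation of $\|{\bf v}_i^\alpha\|_{H^1(\Omega\setminus\Omega_R)}$ and $\|{\boldsymbol\psi}_\alpha\|_{L^\infty(\partial D_i)}$, feed into the global energy bound on ${\bf w}_i^\alpha$ obtained by testing \eqref{w1} against ${\bf w}_i^\alpha$ and using a Bogovskii-type correction to handle the inhomogeneous divergence $-\nabla\cdot{\bf v}_i^\alpha$ on $\Omega\setminus\Omega_R$.

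Once the global energy is under control, I would feed it into the iteration scheme of Lemma \ref{lem3.1} exactly as in Section \ref{sec_estimate3D} for $\alpha = 1, 2, 3$. The iteration is driven by the local $L^2$ norms of ${\bf f}$ on $\Omega_t(z')$: for $\alpha = 4$ the extra factor of $|x'|$ in the relevant terms of ${\bf v}_1^4$ makes $\|{\bf f}\|_{L^2(\Omega_\delta(z'))}$ smaller, which accounts for the improved $\delta(x')^{-1/2}$ on the right-hand side of the $\nabla^2$-bound and the $\varepsilon^{-1/2}$ in the pressure; for $\alpha = 5, 6$ the iteration saturates at one higher power of $\delta(x')^{-1}$, producing the $\delta(x')^{-1}$ in the $\nabla^2$-bound and, after integrating $|\nabla p_i^\alpha|$ along a short path from a point of $\Omega_R\setminus\Omega_{R/2}$ (where $p_i^\alpha$ has the average $(q_i^\alpha)_R$ by \eqref{defqialpha}) down into $\Omega_R$, the $\varepsilon^{-3/2}$-bound on $p_i^\alpha - (q_i^\alpha)_R$.

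The genuine obstacle is the $\alpha = 5, 6$ case, in two concrete places. First, writing ${\bf v}_1^5$ with the $F_i, G, H_i$ components in \eqref{defFGH} is necessary to preserve $\nabla\cdot{\bf v}_1^5 = 0$, but verifying this cancellation is delicate because the $-\tfrac{25}{3}x_3 k(x)$ term in $F_i$ and the $x_i x_3 k^2(x)$ term in $H_i$ have to conspire with the $\partial_{x_3}$ of ${\boldsymbol\psi}_5$ and with the $x'$-derivatives of the $|x'|^2/\delta(x')$-factor. Second, in dimension three the bound $|\nabla{\bf v}_1^5| \leq C/\delta$ puts $\|{\bf f}\|_{L^2}$ close to the threshold at which the energy iteration still closes; it is only after the explicit subtraction of $\nabla\bar p_1^5$ that the residual ${\bf f}$ becomes small enough that the iteration produces the sharp $\|\nabla({\bf u}_1^5-{\bf v}_1^5)\|_{L^\infty} \leq C$ rather than a logarithmically worse bound. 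Once these two algebraic points are checked, the remainder of the argument is a line-by-line adaptation of the proof of Proposition \ref{propu133D}.
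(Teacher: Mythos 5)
Your overall architecture (decompose, bound the global energy, iterate on $\Omega_\delta(z')$, rescale and use ADN/Solonnikov regularity, then triangle inequality) is exactly the paper's, and your treatment of $\alpha=4$ is fine: there $\bar p_1^4=0$ works, $|{\bf f}_1^4|\le C|x'|/\delta(x')$ has bounded $L^2(\Omega_R)$ norm, and the argument is a verbatim repetition of the $\alpha=1,2$ case. The problems are concentrated in the $\alpha=5,6$ case, at precisely the step you pass over most quickly: the global energy bound $\int_\Omega|\nabla{\bf w}_1^\alpha|^2\le C$. After subtracting $\nabla\bar p_1^5$ the residual ${\bf f}_1^5=\mu\Delta{\bf v}_1^5-\nabla\bar p_1^5$ is still of pointwise size $C/\delta(x')$ (see \eqref{estv151-p151}: a term $\tfrac{9\mu}{2\delta(x')}$ survives), so $\|{\bf f}_1^5\|_{L^2(\Omega_R)}\sim|\ln\varepsilon|^{1/2}$ is \emph{not} uniformly bounded. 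Consequently, "testing \eqref{w1} against ${\bf w}$ with a Bogovskii correction" and estimating $\int_{\Omega_R}{\bf f}\cdot{\bf w}$ by Cauchy--Schwarz and Poincar\'e only yields $E_1^\alpha\le C|\ln\varepsilon|$, which propagates a logarithmic loss through the iteration and does not give the sharp bounds of the proposition. The paper's proof (Lemma \ref{lem3.4}) closes this step by a structural observation you do not use: the uncancelled normal-derivative residuals \eqref{estv151-p151}--\eqref{estv153-p153} are polynomials in $x_3$ with $x'$-dependent coefficients whose $x_3$-antiderivatives are \emph{bounded} in $\Omega_{2R}$, so one integrates by parts in $x_3$ (using ${\bf w}=0$ on $\partial D_1\cup\partial D_2$) and gets $\big|\int_{\Omega_{r_0}}{\bf f}\cdot{\bf w}\big|\le C\big(\int_\Omega|\nabla{\bf w}|^2\big)^{1/2}$, while the tangential Laplacian pieces are handled by integration by parts in $x'$ as for $\alpha=1,2$. (An equivalent fix would be a weighted Cauchy--Schwarz with the anisotropic Poincar\'e inequality $\int_{\Omega_R}\delta^{-2}|{\bf w}|^2\le C\int_{\Omega_R}|\partial_{x_3}{\bf w}|^2$, exploiting $\delta|{\bf f}|\le C$; but some such mechanism exploiting the thin direction must be stated — plain testing does not suffice.)

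A second, related gap: your anticipated bound $|{\bf f}|\le C(\delta^{-1}+|x'|\delta^{-2})$ for $\alpha=5,6$ is too weak to yield the proposition. With that bound the local input to the iteration is $\int_{\Omega_s(z')}|{\bf f}|^2\le Cs^2(s^2+|z'|^2)/\delta^3(z')$, i.e.\ the $\alpha=3$ situation, and the scheme then delivers only $\|\nabla({\bf u}_1^\alpha-{\bf v}_1^\alpha)\|_{L^\infty(\Omega_{\delta/2}(z'))}\le C/\sqrt{\delta(z')}$ and $\|\nabla^2{\bf w}\|+\|\nabla q\|\le C/\delta^{3/2}$, not the claimed $C$ and $C/\delta$. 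What must actually be verified (and is, in the paper) is the stronger cancellation $|{\bf f}_1^\alpha|\le C/\delta(x')$, giving $\int_{\Omega_s(z')}|{\bf f}|^2\le Cs^2/\delta(z')$ exactly as for $\alpha=1,2$; only then does the iteration close at the stated rates. So the proposal needs (i) the honest computation showing $|{\bf f}_1^{5}|,|{\bf f}_1^{6}|\le C/\delta$, and (ii) the $x_3$-integration-by-parts (or anisotropic Poincar\'e) argument for the global energy; as written, both are missing and the $\alpha=5,6$ conclusions do not follow.
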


As explained in \cite[Proposition 2.5]{LX2},  the stress tensor will not blow up when the boundary data
takes the same value on $\partial D_1$ and $\partial D_2$. We also present it here for reader's convenience.

\begin{prop}\label{prop1.7}
Let ${\bf u}_{i}^{\alpha},{\bf u}_0\in{C}^{2}(\Omega;\mathbb R^3),~p_{i}^{\alpha},p_0\in{C}^{1}(\Omega)$ be the solution to \eqref{equ_v1} and \eqref{equ_v3}, respectively, $\alpha=1,\dots,6$. Then we have
\begin{equation*}
\|\nabla^{k_1}{\bf u}_0\|_{L^{\infty}(\Omega)}+\|\nabla^{k_1}({\bf u}_{1}^{\alpha}+{\bf u}_{2}^{\alpha})\|_{L^{\infty}(\Omega)}\leq C,\quad k_1=1,2;
\end{equation*}
and
\begin{equation*}
\|\nabla^{k_2} p_0\|_{L^{\infty}(\Omega)}+\|\nabla^{k_2}(p_{1}^{\alpha}+p_{2}^{\alpha})\|_{L^{\infty}(\Omega)}\leq C,\quad k_2=0,1.
\end{equation*}
\end{prop}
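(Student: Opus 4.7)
The crux of the proposition is that in the sum ${\bf u}_{1}^{\alpha}+{\bf u}_{2}^{\alpha}$ the Dirichlet data on both $\partial D_{1}$ and $\partial D_{2}$ equals the \emph{same} rigid motion ${\boldsymbol\psi}_{\alpha}$, and ${\bf u}_{0}$ vanishes on both inclusions. With matching data on the two sides of the narrow gap, no rapid transition across $\Omega_{R}$ is forced and the field admits a smooth extension to $\overline{\Omega}$ whose $C^{3}$-norm is uniform in $\varepsilon$. This is the fundamental contrast with the individual ${\bf u}_{i}^{\alpha}$ of Propositions \ref{propu113D}--\ref{propu4563D}, whose mismatched boundary data drives the $1/\delta$ singularity.

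Concretely, I would fix once and for all a cutoff $\eta\in C^{\infty}_{c}(D)$ with $\eta\equiv 1$ on $\mathcal N:=\{x:\dist(x,\overline{D_{1}\cup D_{2}})<\kappa_{0}/8\}$ and $\eta\equiv 0$ near $\partial D$; since $\mathcal N$ is geometrically $\varepsilon$-independent, so is $\|\eta\|_{C^{3}}$. Put $\Phi:=\eta\,{\boldsymbol\psi}_{\alpha}$ (polynomial times smooth cutoff, hence $\|\Phi\|_{C^{3}(\overline\Omega)}\leq C$ uniformly in $\varepsilon$), and set $({\bf w},q):=({\bf u}_{1}^{\alpha}+{\bf u}_{2}^{\alpha}-\Phi,\,p_{1}^{\alpha}+p_{2}^{\alpha})$. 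Using $\nabla\cdot{\boldsymbol\psi}_{\alpha}=0$ (rigid motions have antisymmetric linear part), one checks that
\begin{equation*}
\begin{cases}
-\mu\Delta{\bf w}+\nabla q=-\mu\Delta\Phi,&\text{in }\Omega,\\
\nabla\cdot{\bf w}=-\nabla\eta\cdot{\boldsymbol\psi}_{\alpha},&\text{in }\Omega,\\
{\bf w}=0,&\text{on }\partial\Omega.
\end{cases}
\end{equation*}
Crucially, both $\mu\Delta\Phi$ and the divergence defect are supported in $\Omega\setminus\mathcal N$, which sits at distance $\ge\kappa_{0}/8$ from $\overline{D_{1}\cup D_{2}}$ and whose geometry does not degenerate as $\varepsilon\to 0$; both functions have $C^{1}$-norms bounded uniformly in $\varepsilon$.

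I would next correct the divergence using a Bogovskii operator on $\Omega\setminus\mathcal N$, where the continuity constants are $\varepsilon$-independent, producing $\tilde{\bf v}\in C^{2}_{0}(\Omega\setminus\mathcal N;\mathbb R^{3})$ with $\nabla\cdot\tilde{\bf v}=-\nabla\eta\cdot{\boldsymbol\psi}_{\alpha}$ and $\|\tilde{\bf v}\|_{C^{2}}\le C$; extended by zero it lives in $\Omega$. Then $\tilde{\bf w}:={\bf w}-\tilde{\bf v}$ solves a standard Stokes system with smooth bounded source, zero divergence and zero Dirichlet data, so the usual energy estimate yields $\|\tilde{\bf w}\|_{H^{1}(\Omega)}+\|q-(q)_{\Omega}\|_{L^{2}(\Omega)}\le C$ uniformly in $\varepsilon$. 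To upgrade to the claimed $L^{\infty}$-bounds on $\nabla{\bf w}$, $\nabla^{2}{\bf w}$, $q-(q)_{\Omega}$ and $\nabla q$, I invoke Solonnikov/Agmon--Douglis--Nirenberg boundary regularity \cite{ADN1964,Solonni1966} in the thick part $\Omega\setminus\overline{\Omega_{R}}$ (whose geometry is $\varepsilon$-independent). Inside the neck $\Omega_{R}$, since $\eta\equiv 1$ and $\Phi={\boldsymbol\psi}_{\alpha}$ there, the source and the divergence defect both vanish; hence ${\bf w}$ solves a homogeneous Stokes system with zero Dirichlet data on the top and bottom of the neck. Rescaling the local cylinder $\Omega_{\delta/2}(x')$ to unit size and combining with the $L^{2}$-bound above yields $\|\nabla^{k}{\bf w}\|_{L^{\infty}(\Omega_{\delta/2}(x'))}\le C$ for $k=1,2$, and analogously for $q-(q)_{\Omega}$ and $\nabla q$, uniformly in $\varepsilon$. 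The case of ${\bf u}_{0}$ is strictly simpler: take $\Phi\equiv 0$ and repeat the argument verbatim.

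The main obstacle is keeping every elliptic constant $\varepsilon$-independent in the thin neck $\Omega_{R}$. This is resolved precisely by the hypothesis special to the sum: once the mismatched boundary data has been cancelled by $\Phi$, the residual problem in $\Omega_{R}$ has zero data on \emph{both} walls of the gap, so the rescaled unit-strip Stokes problem has bounded higher-order norms which translate back, without any $\varepsilon$-loss, to give the $\varepsilon$-uniform bounds asserted in the proposition. The pressure bound $\|p_{1}^{\alpha}+p_{2}^{\alpha}\|_{L^{\infty}}\le C$ is understood after normalization by the additive constant $(q)_{\Omega}$.
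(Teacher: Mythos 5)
Your setup is sound and is in the spirit of what the paper (via the citation to \cite[Proposition 2.5]{LX2}) has in mind: since ${\bf u}_{1}^{\alpha}+{\bf u}_{2}^{\alpha}$ equals ${\boldsymbol\psi}_{\alpha}$ on \emph{both} $\partial D_{1}$ and $\partial D_{2}$, subtracting $\Phi=\eta{\boldsymbol\psi}_{\alpha}$ with a fixed cutoff, correcting the divergence defect by Bogovskii in the thick part, and obtaining an $\varepsilon$-uniform global energy bound for the remainder ${\bf w}$ are all correct steps, and your remark that the pressure bound is to be read modulo an additive normalization is consistent with how the proposition is used. The flaw is in the final, decisive step in the neck. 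You claim that because ${\bf w}$ solves a homogeneous Stokes system with zero data on the top and bottom of $\Omega_{R}$, ``rescaling the local cylinder $\Omega_{\delta/2}(x')$ to unit size and combining with the $L^{2}$-bound above'' gives $\|\nabla^{k}{\bf w}\|_{L^{\infty}(\Omega_{\delta/2}(x'))}\le C$. This does not follow. The scaling-invariant local estimate (the paper's \eqref{W2pstokes}, with ${\bf f}=0$) is
\begin{equation*}
\|\nabla {\bf w}\|_{L^{\infty}(\Omega_{\delta/2}(x'))}\le C\,\delta^{-3/2}\|\nabla {\bf w}\|_{L^{2}(\Omega_{\delta}(x'))},
\end{equation*}
and the global bound $\|\nabla{\bf w}\|_{L^{2}(\Omega)}\le C$ only controls the right-hand side by $C\delta^{-3/2}$, i.e.\ up to $C\varepsilon^{-3/2}$ near $x'=0$ — worse than what one wants to prove. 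The thinness of the domain is exactly where the elliptic constants degenerate, and zero Dirichlet data on both walls does not by itself repair this.

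What is missing is the local energy decay in the neck: one must show $\int_{\Omega_{\delta}(z')}|\nabla{\bf w}|^{2}\,\mathrm{d}x\le C\delta^{3}(z')$ (and the stronger smallness needed for the $\delta^{-5/2}$ prefactor in \eqref{Wmpstokes} when estimating $\nabla^{2}{\bf w}$ and $\nabla q$). In this framework that is obtained from the Caccioppoli-type inequality \eqref{iterating1} iterated over scales as in Lemma \ref{lem3.1}; in your situation ${\bf f}\equiv 0$ and $\nabla\cdot{\bf w}=0$ throughout $\Omega_{2R}$, so the iteration gives $E(t_{i})\le\frac14 E(t_{i+1})$ with no source term and hence a local energy that is in fact superpolynomially small in $\delta$, after which \eqref{W2pstokes}--\eqref{Wmpstokes} yield the claimed uniform bounds (as in the proof of Proposition \ref{propu113D}). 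Without inserting this iteration step — which is the technical heart of the whole method — your argument, as written, only delivers $|\nabla{\bf w}|\le C\delta^{-3/2}$ in the gap and therefore does not prove the proposition.
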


Now let us solve out the twelve free constants $C_{i}^{\alpha}$ introduced in \eqref{introC}. Indeed, the trace theorem can ensure the boundedness of $C_i^{\alpha}$. The main challenge is how to fix the differences of $|C_1^{\alpha}-C_2^{\alpha}|$. For this, it follows from the forth line of \eqref{Stokessys} and the decomposition \eqref{udecom} that  
\begin{equation}\label{equ-decompositon}
\sum_{i=1}^2\sum\limits_{\alpha=1}^{6} C_{i}^{\alpha}
\int_{\partial D_j}{\boldsymbol\psi}_\beta\cdot\sigma[{\bf u}_{i}^\alpha,p_{i}^{\alpha}]\nu
+\int_{\partial D_j}{\boldsymbol\psi}_\beta\cdot\sigma[{\bf u}_{0},p_{0}]\nu=0,~~\beta= 1,\dots,6,
\end{equation}
where $j=1,2$. With the above estimates of $\nabla{\bf u}_{i}^{\alpha}$ and $p_{i}^{\alpha}$ at hand, we can prove the estimates of $|C_1^{\alpha}-C_2^{\alpha}|$ as showed in the lemma below; the proof of which will be given in Subsection \ref{subsecCi}.  

\begin{prop}\label{lemCialpha3D}
Let $C_{i}^{\alpha}$ be defined in \eqref{udecom}. Then
\begin{equation}\label{bddC}
|C_i^{\alpha}|\leq\,C,\quad\,i=1,2,~\alpha=1,2,\dots,6,
\end{equation}
and
\begin{equation}\label{estC1121}
|C_1^{\alpha}-C_2^{\alpha}|\leq \frac{C}{|\ln\varepsilon|}, ~\alpha=1,2,5,6,\quad |C_1^3-C_2^3|\leq C\varepsilon,~\mbox{and}~|C_1^4-C_2^4|\leq C.
\end{equation}
\end{prop}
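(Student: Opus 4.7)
\smallskip

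The plan is to turn \eqref{equ-decompositon} into a symmetric linear system for $(C_i^\alpha)$ via integration by parts, and then exploit a block-diagonal structure under the change of variables $X^\alpha:=C_1^\alpha+C_2^\alpha$, $Y^\alpha:=C_1^\alpha-C_2^\alpha$ to read off \eqref{bddC} and \eqref{estC1121}. First, using $\nabla\cdot\sigma[{\bf u}_i^\alpha,p_i^\alpha]=0$, $\nabla\cdot {\bf u}_j^\beta=0$, and the fact that ${\bf u}_j^\beta={\boldsymbol\psi}_\beta$ on $\partial D_j$ and ${\bf u}_j^\beta=0$ on $\partial D\cup\partial D_{3-j}$, a standard integration by parts (with the outer normal of $\Omega$ being $-\nu$ on $\partial D_i$) yields
\begin{equation*}
a_{ij}^{\alpha\beta}\;:=\;\int_{\partial D_j}{\boldsymbol\psi}_\beta\cdot\sigma[{\bf u}_i^\alpha,p_i^\alpha]\nu\;=\;-2\mu\int_\Omega e({\bf u}_i^\alpha):e({\bf u}_j^\beta)\,dx,
\end{equation*}
which is symmetric under $(i,\alpha)\leftrightarrow(j,\beta)$. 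The same computation gives $b_j^\beta:=\int_{\partial D_j}{\boldsymbol\psi}_\beta\cdot\sigma[{\bf u}_0,p_0]\nu=-2\mu\int_\Omega e({\bf u}_0):e({\bf u}_j^\beta)\,dx$, and since $|e({\bf u}_0)|\le C$ by Proposition \ref{prop1.7} while the pointwise bounds of Propositions \ref{propu113D}--\ref{propu4563D} imply $\int_{\Omega}|e({\bf u}_j^\beta)|\,dx\le C$, we obtain $|b_j^\beta|\le C$.

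Second, summing and subtracting the $j=1$ and $j=2$ equations after substituting $C_1^\alpha=(X^\alpha+Y^\alpha)/2$, $C_2^\alpha=(X^\alpha-Y^\alpha)/2$ produces the two coupled $6\times6$ blocks
\begin{equation*}
\tfrac12\tilde M X+\tfrac12\tilde N Y\;=\;-(b_1+b_2),\qquad \tfrac12\tilde U X+\tfrac12\tilde V Y\;=\;-(b_1-b_2),
\end{equation*}
where, using the bilinear form above,
\begin{align*}
\tilde M^{\alpha\beta}&=-2\mu\!\int_\Omega\! e({\bf u}_1^\alpha+{\bf u}_2^\alpha):e({\bf u}_1^\beta+{\bf u}_2^\beta),\quad
\tilde N^{\alpha\beta}=-2\mu\!\int_\Omega\! e({\bf u}_1^\alpha-{\bf u}_2^\alpha):e({\bf u}_1^\beta+{\bf u}_2^\beta),\\
\tilde U^{\alpha\beta}&=(\tilde N^T)^{\alpha\beta},\qquad
\tilde V^{\alpha\beta}=-2\mu\!\int_\Omega\! e({\bf u}_1^\alpha-{\bf u}_2^\alpha):e({\bf u}_1^\beta-{\bf u}_2^\beta).
\end{align*}
Proposition \ref{prop1.7} gives $|e({\bf u}_1^\alpha+{\bf u}_2^\alpha)|\le C$, so H\"older plus the integrated pointwise bounds force $|\tilde M^{\alpha\beta}|,|\tilde N^{\alpha\beta}|,|\tilde U^{\alpha\beta}|\le C$; moreover $\tilde M$ is negative definite bounded below (the functions ${\bf u}_1^\alpha+{\bf u}_2^\alpha$ form a linearly independent family of smooth vector fields on $\Omega$ independent of $\varepsilon$), so $\tilde M^{-1}$ is a bounded matrix.

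Third, the diagonal entries of $\tilde V$ carry the whole singular scaling. Writing ${\bf u}_i^\alpha={\bf v}_i^\alpha+({\bf u}_i^\alpha-{\bf v}_i^\alpha)$ and using Propositions \ref{propu113D}--\ref{propu4563D} to control the remainder in $L^2(\Omega)$, the energy integral $\int_\Omega |e({\bf v}_1^\alpha-{\bf v}_2^\alpha)|^2\,dx$ can be evaluated explicitly from the formulas \eqref{v1alpha}, \eqref{v133D}, \eqref{v15} using $\delta(x')\sim \varepsilon+|x'|^2$; this delivers
\begin{equation*}
-\tilde V^{\alpha\alpha}\;\sim\;
\begin{cases}
|\ln\varepsilon|,& \alpha=1,2,5,6,\\
\varepsilon^{-1},& \alpha=3,\\
1,& \alpha=4,
\end{cases}
\end{equation*}
while the off-diagonal entries $\tilde V^{\alpha\beta}$ ($\alpha\ne\beta$) are of strictly smaller order thanks to parity (the integrands change sign under $x_i\mapsto -x_i$ in the symmetric neck region, except in matched pairs). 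Consequently the $6\times 6$ Schur complement $\tilde S:=\tilde V-\tilde U\tilde M^{-1}\tilde N$ is asymptotically diagonal with the same diagonal scales.

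Finally, I would invert the block system by Schur complement. Solving the first block gives $X=\tilde M^{-1}(-2(b_1+b_2)-\tilde N Y)$, which on substitution yields $\tilde S\,Y=-2(b_1-b_2)+\tilde U\tilde M^{-1}\cdot 2(b_1+b_2)$, with right-hand side bounded by $C$. Diagonal dominance of $\tilde S$ then yields $|Y^\alpha|\le C/|\tilde S^{\alpha\alpha}|$, reproducing exactly the estimates in \eqref{estC1121}; feeding this back shows $|X^\alpha|\le C$, and \eqref{bddC} follows from $C_i^\alpha=(X^\alpha\pm Y^\alpha)/2$. \textbf{The main technical obstacle} is the last ingredient in the third paragraph: verifying that the off-diagonal entries of $\tilde V$ (and thus of $\tilde S$) are genuinely of smaller order than the diagonal, so that $\tilde S^{-1}$ behaves as advertised. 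This requires a careful inspection of the explicit auxiliary fields ${\bf v}_i^\alpha$ and the parity of $e({\bf v}_1^\alpha-{\bf v}_2^\alpha):e({\bf v}_1^\beta-{\bf v}_2^\beta)$ on the symmetric neck region, which is the step where the precise form of the divergence-free corrections built into \eqref{v1alpha}, \eqref{v133D}, \eqref{v15} really matters.
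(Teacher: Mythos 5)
Your overall architecture (a Gram-matrix linear system built from the energies $\int_\Omega 2\mu\, e({\bf u}_i^\alpha):e({\bf u}_j^\beta)$, diagonal scalings $|\ln\varepsilon|,\varepsilon^{-1},1$, bounded off-diagonal entries, and a linear-algebra inversion) is essentially the paper's strategy, but as written the proposal has genuine gaps exactly where the proof lives. The decisive analytic content — that the off-diagonal energy integrals are $O(1)$ while the diagonal ones have the stated growth — is only flagged as ``the main technical obstacle'' and never carried out; in the paper this is Lemmas \ref{lema113D} and \ref{lema114563D}, whose proofs require explicit computation with the auxiliary fields ${\bf v}_1^\alpha$ together with the remainder bounds of Propositions \ref{propu113D}--\ref{propu4563D}, and the cancellations are \emph{not} all parity in $x'$: for instance the $(1,5)$ and $(2,6)$ entries have a leading integrand that is even in $x'$, and boundedness comes from the oddness of $k(x)$ in $x_3$ (the inner integral $\int_{-\delta/2}^{\delta/2}x_3\,\mathrm{d}x_3=0$), while the $(1,3)$ and $(3,5)$ entries do use oddness in $x_1$. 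Without these computations the claimed structure of $\tilde V$ (and hence of $\tilde S$) is unproved.

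Two further points in your reduction need repair. First, your route requires $\tilde M^{-1}$ to be bounded uniformly in $\varepsilon$; the assertion that ${\bf u}_1^\alpha+{\bf u}_2^\alpha$ is ``a linearly independent family independent of $\varepsilon$'' is not a proof, since these functions (and $\Omega$ itself) depend on $\varepsilon$, and a uniform lower bound on the smallest eigenvalue of $\tilde M$ would need a compactness/convergence argument of the type the paper develops only in Section \ref{sec5} for other purposes. The paper avoids this entirely: \eqref{bddC} is obtained directly from the trace theorem (as in \cite{BLL2}), and then only the single block $a_{11}(C_1-C_2)=b_1-(a_{11}+a_{21})C_2$ from \eqref{systemC} is inverted, the right-hand side being bounded by \eqref{bddC} and $|a_{11}^{\alpha\beta}+a_{21}^{\beta\alpha}|\leq C$. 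Second, ``diagonal dominance'' of $\tilde S$ fails in the $\alpha=4$ row, whose diagonal is only $O(1)$, the same order as its off-diagonal couplings; the conclusion $|Y^4|\leq C$ and the $1/|\ln\varepsilon|$, $\varepsilon$ bounds for the other components still come out, but only after the determinant/cofactor bookkeeping (Cramer's rule with $\det A\sim\varepsilon^{-1}|\ln\varepsilon|^4$ and the cofactor estimates) that the paper performs, not from dominance as you state it. So the skeleton matches the paper, but the key lemmas, the invertibility of $\tilde M$, and the final inversion step all still need to be supplied or replaced by the paper's simpler route.
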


\subsection{The Completion of the Proof of Theorem \ref{mainthm}}\label{subsec-mainthm}
Now, we are in a position to complete the proof of Theorem \ref{mainthm}.

\begin{proof}[Proof of Theorem \ref{mainthm}]
We  rewrite \eqref{udecom} as
\begin{equation*}
\nabla{{\bf u}}=\sum_{\alpha=1}^{6}\left(C_{1}^{\alpha}-C_{2}^{\alpha}\right)\nabla{\bf u}_{1}^{\alpha}
+\nabla {\bf u}_{b}\quad\mbox{and }~
p=\sum_{\alpha=1}^{6}\left(C_{1}^{\alpha}-C_{2}^{\alpha}\right)p_{1}^{\alpha}+p_{b},\quad\mbox{in}~\Omega,
\end{equation*}
where
\begin{equation*}
{\bf u}_{b}:=\sum_{\alpha=1}^{6}C_{2}^{\alpha}({\bf u}_{1}^{\alpha}+{\bf u}_{2}^{\alpha})+{\bf u}_{0},\quad p_{b}:=\sum_{\alpha=1}^{6}C_{2}^{\alpha}(p_{1}^{\alpha}+p_{2}^{\alpha})+p_{0},
\end{equation*}
whose gradients are obviously bounded, due to Propositions \ref{prop1.7}  and \ref{lemCialpha3D}. 

(i) By virtue of the above estimates of $|\nabla{\bf u}_{1}^{\alpha}(x)|$, established in Propositions \ref{propu113D}--\ref{lemCialpha3D}, we obtain
\begin{align}\label{upper-u3D}
|\nabla{\bf u}(x)|&\leq\sum_{\alpha=1}^{6}\left|(C_{1}^{\alpha}-C_{2}^{\alpha}\right)\nabla{\bf u}_{1}^{\alpha}(x)|+C\nonumber\\
&\leq\sum_{\alpha=1,2,5,6}\left|(C_{1}^{\alpha}-C_{2}^{\alpha}\right)\nabla{\bf u}_{1}^{\alpha}(x)|+\left|(C_{1}^{3}-C_{2}^{3}\right)\nabla{\bf u}_{1}^{3}(x)|+C|\nabla{\bf u}_{1}^{4}(x)|+C\nonumber\\
&\leq\frac{C}{|\ln\varepsilon|\delta(x')}+C\varepsilon\left(\frac{1}{\delta(x')}+\frac{|x'|}{\delta^2(x')}\right)+C\left(\frac{|x'|}{\delta(x')}+1\right)\nonumber\\
&\leq \frac{C(1+|\ln\varepsilon||x'|)}{|\ln\varepsilon|(\varepsilon+|x'|^2)}.
\end{align}
Taking into account $q_{1}^{\alpha}=p_{1}^{\alpha}-\bar{p}_{1}^{\alpha}$, let us denote
$$q:=\sum_{\alpha=1}^{6}\left(C_{1}^{\alpha}-C_{2}^{\alpha}\right)q_{1}^{\alpha}+q_{b},\quad\mbox{where}~q_{b}:=\sum_{\alpha=1}^{6}C_{2}^{\alpha}(q_{1}^{\alpha}+q_{2}^{\alpha})+q_{0}.$$
Choose $(q)_{R}:=\sum_{\alpha=1}^{6}(C_{1}^{\alpha}-C_{2}^{\alpha})(q_{1}^\alpha)_{R}$, where $(q_{1}^\alpha)_{R}$ is defined in \eqref{defqialpha}. Then by means of the estimates of $p_{1}^{\alpha}(x)$ in Propositions \ref{propu113D}--\ref{lemCialpha3D}, we have 
\begin{align}\label{upper-p3D}
|p(x)-(q)_{R}|&\leq\sum_{\alpha=1}^{6}|(C_{1}^{\alpha}-C_{2}^{\alpha})(p_{1}^{\alpha}(x)-(q_{1}^\alpha)_{R})|+C\nonumber\\
&\leq\sum_{\alpha=1,2}\left|(C_{1}^{\alpha}-C_{2}^{\alpha}\right)(p_{1}^{\alpha}(x)-(q_{1}^\alpha)_{R})|\nonumber\\
&\quad+\left|(C_{1}^{3}-C_{2}^{3}\right)(p_{1}^{3}(x)-(q_{1}^3)_{R})|+C|(p_{1}^{4}(x)-(q_{1}^4)_{R})|\nonumber\\
&\quad+\sum_{\alpha=5,6}\left|(C_{1}^{\alpha}-C_{2}^{\alpha}\right)(p_{1}^{\alpha}(x)-(q_{1}^\alpha)_{R})|+C\nonumber\\
&\leq\frac{C}{\varepsilon}+\frac{C}{|\ln\varepsilon|\varepsilon^{3/2}}+C\leq \frac{C}{|\ln\varepsilon|\varepsilon^{3/2}},
\end{align}
we conclude that \eqref{thm_p3D} holds. 

(ii) With the use of Propositions \ref{propu113D}--\ref{lemCialpha3D} again, we obtain
\begin{align*}
|\nabla^2{\bf u}(x)|
&\leq\sum_{\alpha=1,2}\left|(C_{1}^{\alpha}-C_{2}^{\alpha}\right)\nabla^2{\bf u}_{1}^{\alpha}(x)|+\left|(C_{1}^{3}-C_{2}^{3}\right)\nabla^2{\bf u}_{1}^{3}(x)|+C|\nabla^2{\bf u}_{1}^{4}(x)|C\nonumber\\
&\quad+\sum_{\alpha=5,6}\left|(C_{1}^{\alpha}-C_{2}^{\alpha}\right)\nabla^2{\bf u}_{1}^{\alpha}(x)|+C\nonumber\\
&\leq\frac{C}{|\ln\varepsilon|}\left(\frac{1}{\delta(x')}+\frac{|x'|}{\delta^2(x')}\right)+C\varepsilon\left(\frac{1}{\delta^{2}(x')}+\frac{|x'|}{\delta^3(x')}\right)+\frac{C}{\delta(x')}+\frac{C}{|\ln\varepsilon|\delta^2(x')}\nonumber\\
&\leq\frac{C(1+|\ln\varepsilon||x'|)}{|\ln\varepsilon|(\varepsilon+|x'|^2)^{2}},
\end{align*}
and 
\begin{align*}
|\nabla p(x)|
&\leq\sum_{\alpha=1,2}\left|(C_{1}^{\alpha}-C_{2}^{\alpha}\right)\nabla p_{1}^{\alpha}(x)|+\left|(C_{1}^{3}-C_{2}^{3}\right)\nabla p_{1}^{3}(x)|+C|\nabla p_{1}^{4}(x)|\nonumber\\
&\quad+\sum_{\alpha=5,6}\left|(C_{1}^{\alpha}-C_{2}^{\alpha}\right)\nabla p_{1}^{\alpha}(x)|+C\nonumber\\
&\leq\frac{C}{|\ln\varepsilon|}\left(\frac{1}{\delta(x')}+\frac{|x'|}{\delta^{2}(x')}\right)+C\varepsilon\left(\frac{1}{\delta^{2}(x')}+\frac{|x'|}{\delta^3(x')}\right)+\frac{C}{\sqrt{\delta(x')}}+\frac{C}{|\ln\varepsilon|\delta^2(x')}\nonumber\\
&\leq \frac{C(1+|\ln\varepsilon||x'|)}{|\ln\varepsilon|(\varepsilon+|x'|^2)^{2}}.
\end{align*}
The proof of Theorem \ref{mainthm} is completed.
\end{proof}

\section{Proofs for the Main Estimates}\label{sec_estimate3D}

This section is devoted to proving the main estimates listed in Section \ref{sec2outline3D}. 

\subsection{Estimates of $|\nabla{\bf u}_1^\alpha|$ and $|p_1^\alpha|$,  $\alpha=1,2$} \label{subsec3.1}

Recalling the definition of ${\bf v}_{1}^{\alpha}$ in \eqref{v1alpha} and using direct calculations, for the first order derivatives,
\begin{align}
\partial_{x_j}({\bf v}_{1}^{\alpha})^{(\alpha)}&=-\frac{2x_j}{\delta(x')}k(x),
\quad\partial_{x_3}({\bf v}_{1}^{{\alpha}})^{(\alpha)}=\frac{1}{\delta(x')},~\alpha,j=1,2;\label{v11-1}\\
 \nabla({\bf v}_{1}^{{\alpha}})^{(\beta)}&=0,\quad\quad~\alpha,\beta=1,2,~\alpha\neq\beta;\label{v11-0}\\
|\partial_{x'}({\bf v}_{1}^{\alpha})^{(3)}|&\leq C,\quad\partial_{x_3}({\bf v}_{1}^{\alpha})^{(3)}=\frac{2x_{\alpha}}{\delta(x')}k(x),~\alpha=1,2.\label{v11-3}
\end{align}
Clearly, we have
\begin{equation*}
\nabla\cdot{\bf v}_{1}^{\alpha}=0,\quad\mbox{for}~\alpha=1,2.
\end{equation*}
Furthermore, concerning the second order derivatives,
\begin{align}
|\partial_{x_1x_1}({\bf v}^{\alpha}_{1})^{(\alpha)}|,|\partial_{x_2x_2}({\bf v}^{\alpha}_{1})^{(\alpha)}|&\leq\frac{C}{\delta(x')},\quad\partial_{x_3x_3}({\bf v}^{\alpha}_{1})^{(\alpha)}=0;\label{estv1alphaxx0}\\
|\partial_{x_1x_1}({\bf v}^{\alpha}_{1})^{(3)}|,|\partial_{x_2x_2}({\bf v}^{\alpha}_{1})^{(3)}|&\leq\frac{C|x_{\alpha}|}{\delta(x')},\quad
\partial_{x_3x_3}({\bf v}^{\alpha}_{1})^{(3)}= \frac{2x_{\alpha}}{\delta^{2}(x')}.\nonumber
\end{align}

In order to control the right hand side of \eqref{w1}, $|{\bf f}_{1}^{\alpha}|$, we choose $\bar{p}_1^{\alpha}\in C^{1}(\Omega)$ such that
\begin{equation*}
\bar{p}_1^{\alpha}=\frac{2\mu x_{\alpha}}{\delta(x')}k(x),\quad\mbox{in}~\Omega_{2R},
\end{equation*}
and $
\|\bar{p}_1^{\alpha}\|_{C^{1}(\Omega\setminus\Omega_{R})}\leq C$.
Luckily, it is easy to observe that
\begin{equation}\label{v11-p}
\mu\partial_{x_3}({\bf v}^{\alpha}_{1})^{(3)}-\bar{p}_1^{\alpha}=0.
\end{equation}
We emphasize that this observation is crucial, because it not only does make the possible biggest term  $\partial_{x_3x_3}({\bf v}^{\alpha}_{1})^{(3)}= \frac{2x_{\alpha}}{\delta^{2}(x_{1})}$, among the Hessian matrix of ${\bf v}^{\alpha}_{1}$, not appear in $|{\bf f}_{1}^{\alpha}|$, but also does not cause the other terms in $|{\bf f}_{1}^{\alpha}|$ to become larger. To be specific, since 
\begin{equation*}
|\partial_{x'}\bar{p}_1^{\alpha}|\leq\frac{C}{\delta(x')}, 
\end{equation*}
it follows from \eqref{estv1alphaxx0}  and \eqref{v11-0} that
\begin{align*}
|({\bf f}_{1}^{\alpha})^{(\alpha)}|&=\Big|\mu\partial_{x_1x_1}({\bf v}^{\alpha}_{1})^{(\alpha)}+\mu\partial_{x_2x_2}({\bf v}^{\alpha}_{1})^{(\alpha)}-\partial_{x_\alpha}\bar{p}_1^\alpha\Big|\leq \frac{C}{\delta(x')},\quad\alpha=1,2;\\
|({\bf f}_{1}^{\alpha})^{(\beta)}|&=|-\partial_{x_\beta}\bar{p}_1^{\alpha}|\leq \frac{C}{\delta(x')},\quad\alpha,\beta=1,2,~\alpha\neq\beta;
\end{align*}
while, in view of \eqref{v11-p},
$$|({\bf f}_{1}^{\alpha})^{(3)}|=\Big|\mu\partial_{x_1x_1}({\bf v}^{\alpha}_{1})^{(3)}+\mu\partial_{x_2x_2}({\bf v}^{\alpha}_{1})^{(3)}\Big|\leq \frac{C|x'|}{\delta(x')},\quad\alpha=1,2.$$
The above estimates yield
\begin{align}\label{fLinfty}
|{\bf f}_{1}^{\alpha}|=\left|\mu\Delta{\bf v}_{1}^{\alpha}-\nabla\bar{p}_1^{\alpha}\right|\leq \frac{C}{\delta(x')}.
\end{align}

Let us denote the total energy in $\Omega$ by
\begin{align}\label{wialpha}
E_{i}^{\alpha}:=\int_{\Omega}|\nabla{\bf w}_{i}^{\alpha}|^{2}\mathrm{d}x.
\end{align}
The above properties of ${\bf v}_{1}^{\alpha}$ and $\bar{p}_1^{\alpha}$ enables us to obtain the boundedness of $E_{i}^{\alpha}$, $\alpha=1,2$, which is a very important step to employ our iteration approach.

For reader's convenience, we present the following lemma in \cite{LX2}.
\begin{lemma}\label{lemmaenergy}
Let $({\bf w},q)$ be the solution to \eqref{w1}. Then if ${\bf v}\in C^{2}(\Omega;\mathbb R^d)$ and $\bar{p}\in C^{1}(\Omega)$ satisfy
\begin{equation*}
\|{\bf v}\|_{C^{2}(\Omega\setminus\Omega_R)}\leq\,C,\quad\|\bar{p}\|_{C^1(\Omega\setminus\Omega_{R})}\leq C,
\end{equation*}
and 
\begin{align}\label{int-fw}
\Big| \int_{\Omega_{R}}\sum_{j=1}^{d}{\bf f}^{(j)}{\bf w}^{(j)}\mathrm{d}x\Big|\leq\,C\left(\int_{\Omega}|\nabla {\bf w}|^2\mathrm{d}x\right)^{1/2},
\end{align}
then
\begin{align*}
\int_{\Omega}|\nabla {\bf w}|^{2}\mathrm{d}x\leq C.
\end{align*}
\end{lemma}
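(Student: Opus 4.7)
The plan is to test the momentum equation by a divergence-free modification of ${\bf w}$, chosen so that the pressure term drops out, and then to bound the resulting right-hand side by splitting integrals into the neck region $\Omega_R$ (where the hypothesis \eqref{int-fw} is used directly) and its complement (where the $C^2$/$C^1$ bounds on ${\bf v}$ and $\bar p$ give $|{\bf f}|\leq C$).

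First I would construct a Bogovskii-type corrector. Since $\nabla\cdot{\bf w}=0$ in $\Omega_{2R}$ while $\nabla\cdot{\bf w}=-\nabla\cdot{\bf v}$ in $\Omega\setminus\Omega_R$, consistency forces $\nabla\cdot{\bf v}\equiv 0$ on the overlap $\Omega_{2R}\setminus\Omega_R$, so the ``non-solenoidal'' part of ${\bf v}$ is supported in $\Omega\setminus\Omega_{2R}$. The boundary data of ${\bf v}$ and the divergence theorem give the solvability condition $\int_{\Omega\setminus\Omega_{2R}}\nabla\cdot{\bf v}\,dx=0$, so by Bogovskii's theorem on the bounded Lipschitz domain $\Omega\setminus\Omega_{2R}$, there exists ${\bf W}\in H^1_0(\Omega\setminus\Omega_{2R})$ with $\nabla\cdot{\bf W}=-\nabla\cdot{\bf v}$ there and
$$\|\nabla{\bf W}\|_{L^2(\Omega\setminus\Omega_{2R})}\leq C\|\nabla\cdot{\bf v}\|_{L^2(\Omega\setminus\Omega_{2R})}\leq C.$$
Extending ${\bf W}$ by zero to $\Omega_{2R}$, I set $\tilde{\bf w}:={\bf w}-{\bf W}\in H^1_0(\Omega)$. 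A direct check shows $\nabla\cdot\tilde{\bf w}=0$ in all of $\Omega$, piece by piece.

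Next I would multiply $-\mu\Delta{\bf w}+\nabla q={\bf f}$ by $\tilde{\bf w}$ and integrate by parts. Since $\tilde{\bf w}\in H^1_0$ is divergence-free, the pressure contribution $\int_\Omega q\,\nabla\cdot\tilde{\bf w}\,dx$ vanishes and I obtain
$$\mu\int_\Omega|\nabla{\bf w}|^2\,dx = \mu\int_\Omega\nabla{\bf w}:\nabla{\bf W}\,dx+\int_{\Omega_R}{\bf f}\cdot{\bf w}\,dx+\int_{\Omega\setminus\Omega_R}{\bf f}\cdot{\bf w}\,dx-\int_{\Omega\setminus\Omega_{2R}}{\bf f}\cdot{\bf W}\,dx.$$
I would then bound the four terms separately: the first by $C\|\nabla{\bf w}\|_{L^2}$ via Cauchy--Schwarz and the control $\|\nabla{\bf W}\|_{L^2}\leq C$; the second by $C\|\nabla{\bf w}\|_{L^2}$ directly from the hypothesis \eqref{int-fw}; the third by using $|{\bf f}|\leq C$ on $\Omega\setminus\Omega_R$ (a consequence of $\|{\bf v}\|_{C^2}\leq C$ and $\|\bar p\|_{C^1}\leq C$ there) together with Poincaré's inequality applied to ${\bf w}\in H^1_0(\Omega)$, yielding $C\|\nabla{\bf w}\|_{L^2}$; and the fourth by the same $|{\bf f}|\leq C$ bound and $\|{\bf W}\|_{L^2}\leq C\|\nabla{\bf W}\|_{L^2}\leq C$, giving a pure constant. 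Combining, $\mu\|\nabla{\bf w}\|_{L^2}^2\leq C+C\|\nabla{\bf w}\|_{L^2}$, and Young's inequality finishes the proof.

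The main technical obstacle is the pressure-divergence coupling: a naive test by ${\bf w}$ alone produces a term $\int_{\Omega\setminus\Omega_{2R}}q\,\nabla\cdot{\bf v}\,dx$, which we have no way of controlling without an a priori bound on $\|q\|_{L^2}$. The Bogovskii corrector ${\bf W}$ is precisely what removes this obstruction, and what makes the construction work is that $\nabla\cdot{\bf v}$ is supported in the ``regular'' far region $\Omega\setminus\Omega_{2R}$ where everything is uniformly bounded in $\varepsilon$. A minor point to verify is that the corrector can be taken to vanish on $\partial\Omega_{2R}\cap\Omega$, so that ${\bf W}$ does not interact with the potentially singular ${\bf f}$ in $\Omega_R$; this is automatic in Bogovskii's construction on $\Omega\setminus\Omega_{2R}$ as long as the domain is Lipschitz, which holds because $\partial D_1,\partial D_2,\partial D$ are $C^3$.
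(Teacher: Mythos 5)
Your proof is correct, and it is worth noting that the paper itself does not prove Lemma \ref{lemmaenergy}: it only quotes the statement from \cite{LX2}, so there is no in-paper argument to compare against line by line; your Bogovski\u{i}-corrector argument is a valid, self-contained proof. Its merit is precisely that testing with the divergence-free field ${\bf w}-{\bf W}$, with ${\bf W}$ supported in $\Omega\setminus\Omega_{2R}$ and $\nabla\cdot{\bf W}=-\nabla\cdot{\bf v}$ there, removes the pressure without any a priori control of $q$, which is the only delicate point; the rest is Cauchy--Schwarz, Poincar\'e, the hypothesis \eqref{int-fw}, and Young's inequality (the a priori finiteness of $\int_\Omega|\nabla{\bf w}|^2$ for each fixed $\varepsilon$ is clear since the solutions are classical and ${\bf v}$ is explicit). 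Three small points to tighten. First, the solvability condition $\int_{\Omega\setminus\Omega_{2R}}\nabla\cdot{\bf v}\,\mathrm{d}x=0$ is better derived from \eqref{w1} itself: since ${\bf w}\in H^1_0(\Omega)$, $0=\int_{\Omega}\nabla\cdot{\bf w}\,\mathrm{d}x=-\int_{\Omega\setminus\Omega_{2R}}\nabla\cdot{\bf v}\,\mathrm{d}x$; the lemma as stated prescribes no boundary data for ${\bf v}$, so your appeal to it is only justified in the application to ${\bf v}_i^\alpha$ (where it does hold, because the flux of a rigid displacement ${\boldsymbol\psi}_\alpha$ across the closed surface $\partial D_1$ vanishes). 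Second, your remark that the non-solenoidal part of ${\bf v}$ is supported in $\Omega\setminus\Omega_{2R}$ does not follow from \eqref{w1} (nothing is assumed about $\nabla\cdot{\bf v}$ inside $\Omega_R$), but this is harmless: your verification that $\nabla\cdot({\bf w}-{\bf W})=0$ only uses the second line of \eqref{w1} in $\Omega_{2R}$ and the corrector in the complement. Third, you should state explicitly that the Bogovski\u{i} and Poincar\'e constants for $\Omega\setminus\Omega_{2R}$ can be chosen independent of $\varepsilon$ (the region is uniformly Lipschitz as $\varepsilon\to0$), since otherwise the bound $\|\nabla{\bf W}\|_{L^2}\leq C$ would not be $\varepsilon$-uniform and the conclusion would lose its point.
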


\begin{lemma}\label{lem3.0}
Let $({\bf w}_i^\alpha,q_i^\alpha)$ be the solution to \eqref{w1}, for $\alpha=1,2$. Then
\begin{align}\label{w1alpha}
E_{i}^{\alpha}=\int_{\Omega}|\nabla{\bf w}_{i}^{\alpha}|^{2}\mathrm{d}x\leq\,C,\quad i=1,2.
\end{align}
\end{lemma}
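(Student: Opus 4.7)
The plan is to apply Lemma \ref{lemmaenergy} directly with ${\bf w} = {\bf w}_i^\alpha$, ${\bf v} = {\bf v}_i^\alpha$, $\bar{p} = \bar{p}_i^\alpha$, and ${\bf f} = {\bf f}_i^\alpha$. The hypotheses $\|{\bf v}_1^\alpha\|_{C^2(\Omega\setminus\Omega_R)}\leq C$ and $\|\bar p_1^\alpha\|_{C^1(\Omega\setminus\Omega_R)}\leq C$ are built into the construction of these auxiliary functions, and the same properties hold for $i=2$ by the symmetry argument sketched after \eqref{v15}. Thus the only nontrivial task is to verify the integral condition \eqref{int-fw}, after which Lemma \ref{lemmaenergy} delivers \eqref{w1alpha} immediately.

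To set up a one-dimensional Poincar\'e inequality in the narrow direction $x_3$, I would first observe that ${\bf w}_1^\alpha$ vanishes on the entire boundary $\partial\Omega$. Indeed, on $\partial D_1$ one has $k(x)=\tfrac12$, so ${\bf v}_1^\alpha={\boldsymbol\psi}_\alpha+{\boldsymbol\psi}_3 x_\alpha\cdot 0={\boldsymbol\psi}_\alpha={\bf u}_1^\alpha$; on $\partial D_2$ one has $k(x)=-\tfrac12$, giving ${\bf v}_1^\alpha=0={\bf u}_1^\alpha$; and on $\partial D$ both functions vanish by construction. Since in addition $\nabla\cdot{\bf v}_1^\alpha=0$ in $\Omega_{2R}$ (this is exactly the reason for including the auxiliary term $\boldsymbol\psi_3 x_\alpha(k^2-\tfrac14)$ in \eqref{v1alpha}), the inhomogeneity in the divergence of ${\bf w}_1^\alpha$ is supported away from the neck, in the region where all quantities are uniformly controlled.

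Now, fixing $x'$ with $|x'|<R$, the one-dimensional Poincar\'e inequality on the interval $(-\tfrac{\varepsilon}{2}-h_2(x'),\tfrac{\varepsilon}{2}+h_1(x'))$ of length $\delta(x')$, together with the vanishing of ${\bf w}_1^\alpha$ at both endpoints, yields
\begin{equation*}
\int_{-\tfrac{\varepsilon}{2}-h_2(x')}^{\tfrac{\varepsilon}{2}+h_1(x')}|{\bf w}_1^\alpha(x',x_3)|^{2}\,\mathrm{d}x_3\ \leq\ \delta(x')^{2}\int_{-\tfrac{\varepsilon}{2}-h_2(x')}^{\tfrac{\varepsilon}{2}+h_1(x')}|\partial_{x_3}{\bf w}_1^\alpha|^{2}\,\mathrm{d}x_3.
\end{equation*}
Combined with Cauchy--Schwarz in $x_3$, this gives $\int_{x_3}|{\bf w}_1^\alpha|\,\mathrm{d}x_3\leq \delta(x')^{3/2}\bigl(\int_{x_3}|\partial_{x_3}{\bf w}_1^\alpha|^{2}\,\mathrm{d}x_3\bigr)^{1/2}$. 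Using the pointwise bound $|{\bf f}_1^\alpha|\leq C/\delta(x')$ from \eqref{fLinfty} and then Cauchy--Schwarz in $x'$, I can estimate
\begin{align*}
\Bigl|\int_{\Omega_R}{\bf f}_1^\alpha\cdot {\bf w}_1^\alpha\,\mathrm{d}x\Bigr|
&\leq C\int_{|x'|<R}\frac{1}{\delta(x')}\int_{x_3}|{\bf w}_1^\alpha|\,\mathrm{d}x_3\,\mathrm{d}x'\\
&\leq C\int_{|x'|<R}\delta(x')^{1/2}\Bigl(\int_{x_3}|\partial_{x_3}{\bf w}_1^\alpha|^{2}\,\mathrm{d}x_3\Bigr)^{1/2}\mathrm{d}x'\\
&\leq C\Bigl(\int_{|x'|<R}\delta(x')\,\mathrm{d}x'\Bigr)^{1/2}\Bigl(\int_{\Omega}|\nabla{\bf w}_1^\alpha|^{2}\,\mathrm{d}x\Bigr)^{1/2}.
\end{align*}
Since $\int_{|x'|<R}\delta(x')\,\mathrm{d}x'=\int_{|x'|<R}(\varepsilon+|x'|^2)\,\mathrm{d}x'\leq C$ independently of $\varepsilon$, the hypothesis \eqref{int-fw} is verified, and Lemma \ref{lemmaenergy} yields \eqref{w1alpha}.

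The substantive work was therefore front-loaded into Subsection~\ref{subsec-constuction}: the sharp choice of $\bar p_1^\alpha$ in \eqref{defp113D} that produces the cancellation \eqref{v11-p} is what reduces the size of ${\bf f}_1^\alpha$ from the generic $O(|x'|/\delta^{2})$ one would get from $\partial_{x_3x_3}({\bf v}_1^\alpha)^{(3)}$ down to the integrable $O(1/\delta)$ bound in \eqref{fLinfty}. Once this cancellation is in hand, the global energy estimate is a routine Poincar\'e-plus-Cauchy--Schwarz argument; the only potential pitfall is to make sure the non-divergence-free part of ${\bf w}_1^\alpha$, which lives in $\Omega\setminus\Omega_R$, is absorbed into the bounded terms of Lemma \ref{lemmaenergy} rather than appearing in the delicate $\Omega_R$ integral.
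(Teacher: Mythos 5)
Your proof is correct, but it verifies the key condition \eqref{int-fw} by a genuinely different route than the paper. The paper never uses a Poincar\'e inequality in the thin direction at this stage: it fixes a slice $r_{0}\in(R,\tfrac32R)$ via the mean value theorem (see \eqref{w1Dw1}), integrates by parts in the tangential variables $x'$ so that only $\nabla_{x'}{\bf v}_{1}^{\alpha}$ and $\bar p_{1}^{\alpha}$ appear, and then exploits the smallness $|\nabla_{x'}({\bf v}_{1}^{\alpha})^{(j)}|,|\bar p_{1}^{\alpha}|\leq C|x'|/\delta(x')$ from \eqref{estv113D}, which is square-integrable over the neck; a boundary term on $\{|x'|=r_{0}\}$ is absorbed using \eqref{w1Dw1}. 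You instead keep ${\bf f}_{1}^{\alpha}$ intact, use only the crude pointwise bound $|{\bf f}_{1}^{\alpha}|\leq C/\delta(x')$ of \eqref{fLinfty}, and compensate by the one-dimensional Poincar\'e inequality in $x_{3}$ (legitimate, since ${\bf w}_{1}^{\alpha}=0$ on $\partial D_{1}\cup\partial D_{2}$ by \eqref{w1}), which yields $\int|{\bf w}_{1}^{\alpha}|\,\mathrm{d}x_{3}\leq\delta^{3/2}\bigl(\int|\partial_{x_{3}}{\bf w}_{1}^{\alpha}|^{2}\,\mathrm{d}x_{3}\bigr)^{1/2}$; since $\delta^{-1}\cdot\delta^{3/2}=\delta^{1/2}$ is square-integrable in $x'$, Cauchy--Schwarz closes the estimate. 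Both arguments hinge on the same cancellation \eqref{v11-p} coming from the choice \eqref{defp113D}, which is what brings $|{\bf f}_{1}^{\alpha}|$ down from $O(|x'|/\delta^{2})$ to $O(1/\delta)$; what your slicewise-Poincar\'e route buys is simplicity (no integration by parts, no choice of $r_{0}$, no boundary term), and in fact it would also handle the later cases treated in Lemmas \ref{lem_energyw13} and \ref{lem3.4}, since there $|{\bf f}_{1}^{3}|\leq C|x'|/\delta^{2}$ and $|{\bf f}_{1}^{5}|\leq C/\delta$ give $\delta^{-1/2}|x'|$ and $\delta^{1/2}$ respectively after the same Poincar\'e step, both square-integrable; the paper's integration-by-parts scheme (in $x'$, or in $x_{3}$ against explicit antiderivatives for $\alpha=3,5,6$) is the one that transfers a derivative onto ${\bf w}$ and is retained there for uniformity with the rest of the iteration machinery.
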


\begin{proof}
Here and throughout this paper, we only prove the case of $i=1$  for instance, since the case $i=2$ is the same. From Lemma  \ref{lemmaenergy},  the proof of inequality \eqref{w1alpha} is reduced to proving condition \eqref{int-fw}, i.e.
\begin{align*}
\Big| \int_{\Omega_{R}}\sum_{j=1}^{3}{\bf f}^{(j)}{\bf w}^{(j)}\mathrm{d}x\Big|\leq\,C\left(\int_{\Omega}|\nabla {\bf w}|^2\mathrm{d}x\right)^{1/2}.
\end{align*}
For this, first as in \cite{BLL}, by mean value theorem and Poincar\'e inequality, there exists $r_{0}\in(R,\frac{3}{2}R)$ such that
\begin{align}\label{w1Dw1}
\int_{\substack{|x'|=r_{0},\\-\epsilon/2-h_{2}(x')<x_{3}<\epsilon/2+h_{1}(x')}}|{\bf w}_1^\alpha|\mathrm{d}x_{3}
&\leq C\left(\int_{\Omega}|\nabla {\bf w}_1^\alpha|^2\mathrm{d}x\right)^{1/2}.
\end{align}

For $j=1,2$, applying integration by parts with respect to $x'$, using \eqref{estv1alphaxx0} and \eqref{v11-p},
\begin{align*}
\left|\int_{\Omega_{r_{0}}} {\bf f}^{(j)}({\bf w}_1^\alpha)^{(j)}\mathrm{d}x\right|
&=\left|\int_{\Omega_{r_{0}}} ({\bf w}_1^\alpha)^{(j)}(\mu\Delta_{x'}({\bf v}_1^\alpha)^{(j)}-\partial_{x_j}\bar{p}_1^\alpha)\mathrm{d}x\right|\nonumber\\
&\leq\,C\int_{\Omega_{r_{0}}}|\nabla_{x'}({\bf w}_1^\alpha)^{(j)}||\nabla_{x'}({\bf v}_1^\alpha)^{(j)}|+|\partial_{x_j}({\bf w}_1^\alpha)^{(j)}||\bar{p}_1^\alpha|\mathrm{d}x\nonumber\\
&\quad+C\,\int_{\substack{|x'|=r_{0},\\-\epsilon/2-h_{2}(x')<x_{3}<\epsilon/2+h_{1}(x')}}|({\bf w}_1^\alpha)^{(j)}|\mathrm{d}x_{3}=:\mathrm{I}_1+C\,\mathrm{I}_2^{(j)};
\end{align*}
and for $j=3$,
\begin{align*}
&\left|\int_{\Omega_{r_{0}}} {\bf f}^{(3)}({\bf w}_1^\alpha)^{(3)}\mathrm{d}x\right|=\left|\mu\int_{\Omega_{r_{0}}} ({\bf w}_1^\alpha)^{(3)}\Delta_{x'}({\bf v}_1^\alpha)^{(3)}\mathrm{d}x\right|\nonumber\\
\leq&\,C\int_{\Omega_{r_{0}}}|\nabla_{x'}({\bf w}_1^\alpha)^{(3)}||\nabla_{x'}({\bf v}_1^\alpha)^{(3)}|\mathrm{d}x+C\int_{\substack{|x'|=r_{0},\\-\epsilon/2-h_{2}(x')<x_{3}<\epsilon/2+h_{1}(x')}}|({\bf w}_1^\alpha)^{(3)}|\mathrm{d}x_{3}\nonumber\\
=&\,:\mathrm{I}_2+C\,\mathrm{I}_2^{(3)}.
\end{align*}
In view of  \eqref{v11-0}, \eqref{defp113D} and \eqref{v11-1}, 
\begin{equation}\label{estv113D}
|\nabla_{x'}({\bf v}_1^\alpha)^{(j)}|,|\bar{p}_1^\alpha|\leq\,\frac{C|x'|}{\delta(x')},~\,j=1,2,\quad\mbox{and}~|\nabla_{x'}({\bf v}_1^\alpha)^{(3)}|\leq\,C,\quad
\mbox{ in} ~\Omega_{2R}.
\end{equation}
Applying H\"{o}lder's inequality and using \eqref{estv113D}, 
\begin{align*}
|\mathrm{I}_1|&\leq C\left(\int_{\Omega_{r_{0}}}|\nabla_{x'}({\bf v}_1^\alpha)^{(j)}|^{2}+|\bar{p}_1^\alpha|^2\mathrm{d}x\right)^{1/2}
\left(\int_{\Omega} |\nabla_{x'}({\bf w}_1^\alpha)^{(j)}|^2\mathrm{d}x\right)^{1/2}\nonumber\\
&\leq C \left(\int_{\Omega} |\nabla {\bf w}_1^\alpha|^2\mathrm{d}x\right)^{1/2},
\end{align*}
and
\begin{align*}
|\mathrm{I}_2|&\leq C\left(\int_{\Omega_{r_{0}}}|\nabla_{x'}({\bf v}_1^\alpha)^{(3)}|^{2}\mathrm{d}x\right)^{1/2}
\left(\int_{\Omega} |\nabla_{x'}({\bf w}_1^\alpha)^{(3)}|^2\mathrm{d}x\right)^{1/2}\leq C \left(\int_{\Omega} |\nabla {\bf w}_1^\alpha|^2\mathrm{d}x\right)^{1/2}.
\end{align*}
Together with \eqref{w1Dw1}, \eqref{int-fw} is proved. Then, thanks to Lemma  \ref{lemmaenergy}, the proof of Lemma \ref{lem3.0} is finished.
\end{proof}

\begin{remark}
With \eqref{w1alpha}, as mentioned before, we can directly apply classical elliptic estimates, see \cite{ADN1959,ADN1964,Solonni1966}, to obtain $\|\nabla{\bf w}_{i}^{\alpha}\|_{L^{\infty}(\Omega\setminus\Omega_{R})}\leq\,C$. So we only focus on the estimates in $\Omega_{R}$.
\end{remark}

Integrating \eqref{fLinfty}, we have
\begin{align}\label{f1alpha}
\int_{\Omega_{s}(z')}|{\bf f}_{1}^{\alpha}|^{2}\mathrm{d}x\leq\frac{Cs^2}{\delta(z')}.
\end{align}
This and \eqref{w1alpha} are good enough to employ the adapted iteration technique to obtain the following local energy estimates.

\begin{lemma}\label{lem3.1}
Let $({\bf w}_i^\alpha,q_i^\alpha)$ be the solution to \eqref{w1}, for $\alpha=1,2$. Then
\begin{align}\label{estw11narrow3D}
\int_{\Omega_{\delta}(z')}|\nabla {\bf w}_i^\alpha|^{2}\mathrm{d}x\leq C\delta^3(z'),\quad\alpha=1,2,~i=1,2.
\end{align}
\end{lemma}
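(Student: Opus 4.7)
Fix $|z'|\leq R$, abbreviate $\delta=\delta(z')$, and set $F(t):=\int_{\Omega_{t}(z')}|\nabla {\bf w}_{i}^{\alpha}|^{2}\,dx$ for $\delta\leq t\leq R$. The plan is to run an energy iteration on nested thin cylinders $\Omega_{t_{k}}(z')$ shrinking down to $\Omega_{\delta}(z')$, starting from the global bound $F(R)\leq C$ supplied by Lemma \ref{lem3.0}, and driven by the forcing bound $\int_{\Omega_{t}(z')}|{\bf f}_{1}^{\alpha}|^{2}\,dx\leq Ct^{2}/\delta$ recorded in \eqref{f1alpha}. A hole-filling contraction of the form $F(s)\leq \theta F(t)+C\delta\cdot t^{2}$ over steps of size $t-s\sim\delta$, iterated $O(1/\delta)$ times, will extract $F(\delta)\leq C\delta^{3}$: the initial energy decays exponentially through the $\theta^{N}$ factor, while the geometric sum of the forcing contributions produces precisely the $\delta^{3}$.

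\textbf{Local energy inequality.} The first ingredient is a Caccioppoli estimate for \eqref{w1}. Take a cutoff $\eta\in C_{c}^{\infty}(\Omega_{t}(z'))$ with $\eta\equiv 1$ on $\Omega_{s}(z')$ and $|\nabla\eta|\leq C/(t-s)$. Testing the Stokes equation against ${\bf w}\eta^{2}$ and using $\nabla\cdot{\bf w}=0$ in $\Omega_{2R}$ to rewrite the pressure pairing as $\int 2(q-c)\,\eta\,{\bf w}\cdot\nabla\eta\,dx$ for any constant $c$, one obtains, after Cauchy--Schwarz,
\begin{align*}
F(s)&\leq \frac{C}{(t-s)^{2}}\int_{\Omega_{t}\setminus\Omega_{s}}|{\bf w}|^{2}\,dx+C(t-s)^{2}\int_{\Omega_{t}}|{\bf f}|^{2}\,dx\\
&\quad +\frac{C}{(t-s)^{2}}\int_{\Omega_{t}\setminus\Omega_{s}}|q-c|^{2}\,dx.
\end{align*}
Inside $\Omega_{t}(z')$ the slab thickness in $x_{3}$ is comparable to $\delta$ (as long as $t\lesssim\sqrt\delta$, so that $\delta(y')\sim\delta$ throughout); since ${\bf w}$ vanishes on the top and bottom, Poincar\'e in $x_{3}$ yields $\int_{\Omega_{t}\setminus\Omega_{s}}|{\bf w}|^{2}\leq C\delta^{2}\int_{\Omega_{t}\setminus\Omega_{s}}|\nabla{\bf w}|^{2}$.

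\textbf{Pressure term.} Controlling $\|q-c\|_{L^{2}}$ is the essential new difficulty compared with the Lam\'e analysis of \cite{BLL2}. I take $c$ as the mean of $q$ over $\Omega_{t}\setminus\Omega_{s}$ and, for each $g\in L^{2}(\Omega_{t}\setminus\Omega_{s})$ of zero average, build a compactly supported ${\bf \Phi}$ with $\nabla\cdot{\bf \Phi}=g$ and $\|\nabla{\bf \Phi}\|_{L^{2}}\leq C\|g\|_{L^{2}}$ via Bogovskii's operator on the annular shell. Pairing $\nabla q=\mu\Delta{\bf w}+{\bf f}$ with ${\bf \Phi}$, integrating by parts, and using the thin-slab Poincar\'e estimate $\|{\bf \Phi}\|_{L^{2}}\leq C\delta\,\|\nabla{\bf \Phi}\|_{L^{2}}$ gives
$$\|q-c\|_{L^{2}(\Omega_{t}\setminus\Omega_{s})}\leq C\bigl(\|\nabla{\bf w}\|_{L^{2}(\Omega_{t})}+\delta\,\|{\bf f}\|_{L^{2}(\Omega_{t})}\bigr).$$
The Bogovskii constant is universal because the rescaling $x_{3}\mapsto x_{3}/\delta$ maps the shell onto a domain of unit geometry.

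\textbf{Closing the iteration.} Plugging Poincar\'e and the pressure bound into the Caccioppoli inequality yields
$$F(s)\leq \frac{C\delta^{2}}{(t-s)^{2}}\bigl(F(t)-F(s)\bigr)+\frac{C(t-s)^{2}t^{2}}{\delta}.$$
Choosing $t-s=\kappa\delta$ for a sufficiently large universal $\kappa$ absorbs the $F(s)$ on the left and gives $F(s)\leq \theta F(t)+C\delta\cdot t^{2}$ with $\theta:=\kappa^{2}/(\kappa^{2}+C)<1$. Iterating along $t_{k}:=\delta+k\kappa\delta$ for $k=0,1,\dots,N$ with $N\kappa\delta\sim R$, and using $F(t_{N})\leq F(R)\leq C$, one arrives at
$$F(\delta)\leq \theta^{N}F(R)+C\delta\sum_{k=1}^{N}\theta^{k-1}t_{k}^{2}\leq Ce^{-c/\delta}+C\delta^{3}\sum_{k\geq 0}(k+1)^{2}\theta^{k}\leq C\delta^{3},$$
which is the claim. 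The \emph{main obstacle} is the pressure step: without the Bogovskii-based localization the iteration would either pick up a logarithmic loss or produce a term of the wrong order in $\delta$, and the rescaling that makes all Bogovskii and Poincar\'e constants universal is what keeps the bound sharp. A small auxiliary point is the restriction $t\lesssim\sqrt\delta$ needed for uniform slab thickness; the complementary regime $t\in[\sqrt\delta,R]$ is handled by a direct estimate using Lemma \ref{lem3.0} to feed the first step of the iteration.
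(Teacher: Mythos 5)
Your overall skeleton is the same as the paper's: start from the global bound of Lemma \ref{lem3.0}, use the forcing bound \eqref{f1alpha}, and run a hole-filling iteration on cylinders shrinking in steps of size $\sim\delta$ down to $\Omega_{\delta}(z')$, stopping the Caccioppoli regime at lateral scale $\sim\sqrt{\delta}$. The difference is that the paper does not prove the local energy inequality at all — it imports \eqref{iterating1} from \cite[Lemma 3.10]{LX2} — whereas you attempt to prove it, and that is where your argument has a genuine gap, concentrated in the pressure step. First, a bookkeeping problem: your displayed pressure bound reads $\|q-c\|_{L^{2}(\Omega_t\setminus\Omega_s)}\leq C\big(\|\nabla{\bf w}\|_{L^{2}(\Omega_t)}+\delta\|{\bf f}\|_{L^{2}(\Omega_t)}\big)$, with the gradient over the \emph{full} cylinder and no factor of $\delta$; inserted into your Caccioppoli term $\frac{C}{(t-s)^{2}}\int_{\Omega_t\setminus\Omega_s}|q-c|^{2}$ with $t-s\sim\kappa\delta$ it produces $\frac{C}{\kappa^{2}\delta^{2}}F(t)$, which is neither of the form $\frac{C\delta^{2}}{(t-s)^{2}}\big(F(t)-F(s)\big)$ nor absorbable, so the claimed contraction $F(s)\leq\theta F(t)+C\delta t^{2}$ does not follow from what you wrote. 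To close the step you must (i) localize the pressure estimate to the annulus (the duality/Bogovskii argument does give $\|\nabla{\bf w}\|_{L^{2}(\text{annulus})}$ if the divergence problem is solved there) and (ii) estimate the pairing as $\frac{C\delta}{t-s}\|q-c\|_{L^{2}(\text{annulus})}\|\nabla{\bf w}\|_{L^{2}(\text{annulus})}$ before applying Young's inequality.

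Second, and more fundamentally, the constant in your Bogovskii solve is not universal for the reason you give. The shell $\{s<|x'-z'|<t\}\cap\Omega$ is, in dimension three, a thin \emph{ring}: its circumference is of order $|z'|+t$, which along your iteration reaches $\sim\sqrt{\delta}\gg\delta$, while its radial width is $\sim\kappa\delta$ and its height $\sim\delta$. Rescaling $x_{3}\mapsto x_{3}/\delta$ (or all variables by $\delta$) does not produce a domain of unit geometry; it produces a ring corridor of length up to $\delta^{-1/2}$ with bounded cross-section, and for such elongated domains the inf-sup/Bogovskii constant degenerates with the length (test with data equal to $+1$ on one half of the ring and $-1$ on the other: any $H^{1}_{0}$ vector field with this divergence must carry a flux proportional to the length through a bounded cross-section). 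So the constant $C$ in your pressure estimate grows along the iteration, and with it the contraction factor deteriorates, destroying the sharp $\delta^{3}$. This is exactly the new three-dimensional difficulty that a correct proof of \eqref{iterating1} must address (e.g.\ by exploiting the specific structure of $\nabla\cdot(\eta^{2}{\bf w})$, or by local divergence corrections with explicit flux bookkeeping rather than a single mean subtraction over the whole ring); note that in the two-dimensional setting of \cite{LX2} the corresponding set is just two boxes of size $\sim\delta\times\delta$, where your rescaling argument would be fine. The remaining parts of your proposal (Poincar\'e in the thin direction, the restriction $t\lesssim\sqrt{\delta}$ with the outer regime fed by Lemma \ref{lem3.0}, and the summation $\sum_k (k+1)^{2}\theta^{k}$ giving $\delta^{3}$) agree with the paper's iteration and are fine.
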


\begin{proof} The iteration scheme we use in the proof is in
spirit similar  to that used in \cite{LLBY,BLL2}. Let us set
$$E(t):=\int_{\Omega_{t}(z')}|\nabla {\bf w}_1^\alpha|^{2}\mathrm{d}x.$$
Recalling Caccioppoli-type inequality in \cite[Lemma 3.10]{LX2}: for $0<t<s\leq R$,
\begin{align}\label{iterating1}
\int_{\Omega_{t}(z')}|\nabla {\bf w}|^{2}\mathrm{d}x\leq\,&
\frac{C\delta^2(z')}{(s-t)^{2}}\int_{\Omega_{s}(z')}|\nabla {\bf w}|^2\mathrm{d}x+C\left((s-t)^{2}+\delta^{2}(z')\right)\int_{\Omega_{s}(z')}|{\bf f}|^{2}\mathrm{d}x.
\end{align}
Then combining with \eqref{f1alpha}, we have 
\begin{align}\label{iteration3D}
E(t)\leq \left(\frac{c_{0}\delta(z')}{s-t}\right)^2 E(s)+C\left((s-t)^{2}+\delta^2(z')\right)\frac{s^2}{\delta(z')},
\end{align}
where $c_{0}$ is a constant and we fix it now. Let $k_{0}=\left[\frac{1}{4c_{0}\sqrt{\delta(z')}}\right]$ and $t_{i}=\delta(z')+2c_{0}i\delta(z'), i=0,1,2,\dots,k_{0}$. 
So, applying \eqref{iteration3D} with $s=t_{i+1}$ and $t=t_{i}$, we have the following iteration formula:
\begin{align*}
E(t_{i})\leq \frac{1}{4}E(t_{i+1})+C(i+1)^2\delta^3(z').
\end{align*}
After $k_{0}$ iterations, and by virtue of \eqref{w1alpha}, we obtain
\begin{align*}
E(t_0)&\leq \left(\frac{1}{4}\right)^{k}E(t_{k})
+C\delta^3(z')\sum\limits_{l=0}^{k-1}\left(\frac{1}{4}\right)^{l}(l+1)^2\nonumber\\
&\leq \left(\frac{1}{4}\right)^{k}E_{1}^{\alpha}+C\delta^3(z')\sum\limits_{l=0}^{k-1}\left(\frac{1}{4}\right)^{l}(l+1)^2\leq C\delta^3(z'),
\end{align*}
for sufficiently small $\varepsilon$ and $|z'|$.  As a consequence,  \eqref{estw11narrow3D} is proved.
\end{proof}

We are now in a position to prove Proposition \ref{propu113D}.
\begin{proof}[Proof of Proposition \ref{propu113D}.]
Recalling \cite[Proposition 3.6]{LX2}, we have
\begin{align}\label{W2pstokes}
\|\nabla {\bf w}_1^\alpha\|_{L^{\infty}(\Omega_{\delta/2}(z'))}
\leq
\,C\left(\delta^{-\frac{3}{2}}\|\nabla {\bf w}_1^\alpha\|_{L^{2}(\Omega_{\delta}(z'))}+\delta\|{\bf f}_1^\alpha\|_{L^{\infty}(\Omega_{\delta}(z'))} \right)
\end{align}
and 
\begin{align}\label{Wmpstokes}
&\|\nabla^2 {\bf w}_1^\alpha\|_{L^{\infty}(\Omega_{\delta/2}(z'))}+\|\nabla q_1^\alpha\|_{L^{\infty}(\Omega_{\delta/2}(z'))}\nonumber\\
&\leq
C\Big(\delta^{-\frac{5}{2}}\|\nabla {\bf w}_1^\alpha\|_{L^{2}(\Omega_{\delta}(z'))}+\|{\bf f}_1^\alpha\|_{L^{\infty}(\Omega_{\delta}(z'))}+\delta\|\nabla{\bf f}_1^\alpha\|_{L^{\infty}(\Omega_{\delta}(z'))}\Big).
\end{align}
Substituting \eqref{estw11narrow3D} and \eqref{fLinfty} into \eqref{W2pstokes}, we have
\begin{align*}
\|\nabla {\bf w}_1^\alpha\|_{L^{\infty}(\Omega_{\delta/2}(z'))}
\leq C.
\end{align*}
This together with ${\bf w}_1^\alpha={\bf u}_1^\alpha-{\bf v}_1^\alpha$ and \eqref{v11-1}--\eqref{v11-3} gives
\begin{align*}
|\nabla {\bf u}_1^\alpha(z)|&\leq|\nabla {\bf v}_1^\alpha(z)|+|\nabla {\bf w}_1^\alpha(z)|\leq\frac{C}{\delta(z')},\quad z\in\Omega_{R},\\
|\nabla {\bf u}_1^\alpha(0',z_{3})|&\geq|\nabla {\bf v}_1^\alpha(0',z_{3})|-C\geq\frac{1}{C\delta(z')},\quad |z_{3}|\leq\frac{\varepsilon}{2}.
\end{align*}
By \eqref{fLinfty} and using a direct calculation, we have for $z\in\Omega_R$,
\begin{equation*}
|{\bf f}_1^\alpha(z)|+\delta(z')|\nabla {\bf f}_1^\alpha(z)|\leq\frac{C}{\delta(z')}.
\end{equation*}
Then combining with \eqref{estw11narrow3D}, and applying \eqref{Wmpstokes}, we obtain
\begin{align}\label{nablaq}
\|\nabla^2 {\bf w}_1^\alpha\|_{L^{\infty}(\Omega_{\delta/2}(z'))}+\|\nabla q_1^\alpha\|_{L^{\infty}(\Omega_{\delta/2}(z'))}\leq\frac{C}{\delta(z')}.
\end{align}

Recalling
$q_{1}^{\alpha}=p_{1}^{\alpha}-\bar{p}_{1}^{\alpha}$, $p_{1}^{\alpha}$ and $\bar{p}_{1}^{\alpha}$ are of class $C^{1}(\overline{\Omega})$, so $p_{1}^{\alpha}(x',x_{3})|_{|x'|=R,x\in\Omega}$ and $\bar{p}_{1}^{\alpha}(x',x_{3})|_{|x'|=R,x\in\Omega}$ are bounded. Hence, by using \eqref{nablaq}, we know that $(q_{1}^{\alpha})_{R}$ is bounded, independent of $\varepsilon$. 
Applying the mean value theorem and \eqref{nablaq} yields that, for any $z\in\Omega_R$,
\begin{align}\label{est-q}
|q_1^{\alpha}(z)-(q_{1}^{\alpha})_{R}|&=\Big|\frac{1}{\Omega_R\setminus\Omega_{R/2}}\int_{\Omega_R\setminus\Omega_{R/2}}(q_1^{\alpha}(z)-q_1^{\alpha}(y))\ dy\Big|\nonumber\\
&\leq C\int_{\Omega_R}|q_1^{\alpha}(z)-q_1^{\alpha}(y)|\ dy\leq\,C\|\nabla q_1^\alpha\|_{L^{\infty}(\Omega_R)}\leq\frac{C}{\varepsilon}.
\end{align}
In virtue of \eqref{defp113D} and \eqref{est-q}, we deduce
\begin{align*}
|p_{1}^{\alpha}(z)-(q_{1}^\alpha)_{\Omega_R}|\leq |q_1^{\alpha}(z)-(q_{1}^{\alpha})_{\Omega_R}|+|\bar{p}_1^{\alpha}(z)|\leq\frac{C}{\varepsilon}+\frac{C|z'|}{\delta(z')}\leq\frac{C}{\varepsilon}.
\end{align*}
Using  ${\bf w}_1^\alpha={\bf u}_1^\alpha-{\bf v}_1^\alpha$ and \eqref{v1alpha}, the estimate of $|\nabla^2 {\bf u}_1^\alpha|$ follows.
Therefore, Proposition \ref{propu113D} is proved. 
\end{proof}

\begin{remark}
From the above calculations, our method works well for general $h_{1}(x')$ and $h_{2}(x')$. For example, if $h_{1}=h_{2}$ satisfies \eqref{h1-h2}--\eqref{h1h14}, then we take
$${\bf v}_{1}^{\alpha}=\boldsymbol\psi_{\alpha}\Big(k(x)+\frac{1}{2}\Big)+\frac{\partial_{x_{\alpha}}(h_{1}+h_{2})(x')}{2}\boldsymbol\psi_{3}\Big(k^2(x)-\frac{1}{4}\Big),\quad\alpha=1,2,\quad\hbox{in}\ \Omega_{2R},
$$
which satisfies $\nabla\cdot{\bf v}_{1}^{\alpha}=0$ in $\Omega_{2R}$. 
\end{remark}

\subsection{Estimates of $|\nabla{\bf u}_1^3|$ and $|p_1^3|$}\label{subsec3.2}
Recalling the definition of ${\bf v}_{1}^{3}$ in $\Omega_{2R}$, \eqref{v133D}, by direct calculations, we deduce
\begin{align}
\partial_{x_{j}}({\bf v}_{1}^{3})^{(j)}&=\left(\frac{3}{\delta(x')}-\frac{18x_{j}^{2}}{\delta^{2}(x')}\right)k^{2}(x)-\frac{1}{4}\left(\frac{3}{\delta(x')}-\frac{6x_{j}^{2}}{\delta^{2}(x')}\right),\quad\,j=1,2;\label{estv131}\\
\partial_{x_{l}}({\bf v}_{1}^{3})^{(j)}&=\frac{18x_{1}x_{2}}{\delta^{2}(x')}k^{2}(x)+\frac{3}{2}\frac{x_{1}x_{2}}{\delta^{2}(x')},~ \partial_{x_3}({\bf v}_{1}^{3})^{(j)}=\frac{6x_{j}}{\delta^2(x')}k(x),~j,l=1,2,j\neq\,l;\label{estv132}
\end{align}
while, for $j=3$,
\begin{equation}\label{estv133}
\partial_{x_l}({\bf v}_{1}^{3})^{(3)}:=A^{33}_{1}(x')x_{3}+A^{33}_{3}(x')x_{3}^{3},\quad\mbox{and}\quad|\partial_{x_l}({\bf v}_{1}^{3})^{(3)}|\leq\frac{C|x_l|}{\delta(x')},\quad l=1,2,
\end{equation}
where the coefficients of $x_{3}^{k}$, $A^{33}_{k}(x')$, are just some polynomials of $x'$, and
\begin{equation}\label{estv1333}
\partial_{x_3}({\bf v}_{1}^{3})^{(3)}=\left(\frac{18|x'|^{2}}{\delta^{2}(x')}-\frac{6}{\delta(x')}\right)k^{2}(x)-\frac{3}{2}\left(\frac{|x'|^{2}}{\delta^{2}(x')}-\frac{1}{\delta(x')}\right).
\end{equation}
Combining \eqref{estv131} and \eqref{estv1333} implies that such ${\bf v}_{1}^{3}$ is incompressible in $\Omega_{2R}$, that is,
\begin{equation*}
\nabla\cdot{\bf v}_{1}^{3}=0,\quad\hbox{in}\ \Omega_{2R}.
\end{equation*}
Consequently, it follows from \eqref{estv131}--\eqref{estv1333} that
\begin{equation*}
|\nabla{\bf v}_{1}^{3}|\leq\,C\left(\frac{1}{\delta(x')}+\frac{|x'|}{\delta^{2}(x')}\right),\quad|\nabla{\bf v}_{1}^{3}(0',x_{3})|\geq\,\frac{1}{C}\frac{1}{\delta(x')}.
\end{equation*}
Notice that
\begin{equation}\label{v13-0}
\int_{\Omega}|\nabla {\bf v}_{1}^{3}|^{2}\mathrm{d}x\leq C\int_{\Omega}\left(\frac{1}{\delta(x')}+\frac{|x'|}{\delta^{2}(x')}\right)^{2}\mathrm{d}x\leq\frac{C}{\varepsilon},
\end{equation}
it is impossible prove the boundedness of the global energy of ${\bf w}_{1}^{3}$ by directly applying Lemma \ref{lem3.0}. So we must improve our technique later. 

In order to control ${\bf f}_{1}^3=\mu\Delta{\bf v}_{1}^3-\nabla\bar{p}_1^3$, we need to check the second order derivatives of ${\bf v}_{1}^3$. It is easy to see from \eqref{estv1333} that $|\partial_{x_3x_3} ({\bf v}_{1}^3)^{(3)}|\leq\frac{C}{\delta^2(x')}$, which is the biggest term in $\partial_{x_{l}}^{2}({\bf v}_{1}^3)^{(j)}$. So we construct a $\bar{p}_1^3\in C^{1}(\Omega)$, defined in \eqref{p133D}, 
$$\bar{p}_1^3=-\frac{3}{2}\frac{\mu}{\delta^2(x')}+\frac{18\mu}{\delta(x')}\left(\frac{|x'|^{2}}{\delta(x')}-\frac{1}{3}\right)k^{2}(x),\quad\mbox{in}~\Omega_{2R}.$$
We notice that at this moment we can not directly get $\mu\partial_{x_3}({\bf v}_{1}^{3})^{(3)}-\bar{p}_1^3=0$ as we dealt with ${\bf v}_{1}^{1}$ in Subsection \ref{subsec3.1}. Indeed,
\begin{equation}\label{eqv13-p}
\mu\partial_{x_3}({\bf v}_{1}^{3})^{(3)}-\bar{p}_1^3=\frac{3}{2}\frac{\mu}{\delta^2(x')}-\frac{3}{2}\mu\left(\frac{|x'|^{2}}{\delta^{2}(x')}-\frac{1}{\delta(x')}\right).
\end{equation}
However, by observation, we find that the right hand side of \eqref{eqv13-p} is independent of $x_{3}$. After differentiating it with respect to $x_{3}$, we still have
\begin{align}\label{estv133p13}
\mu\partial_{x_3x_3} ({\bf v}_{1}^3)^{(3)}-\partial_{x_3}\bar{p}_1^3=0.
\end{align}
Thus, the biggest term $\partial_{x_3x_3} ({\bf v}_{1}^3)^{(3)}$ vanishes.

Meanwhile, 
\begin{equation*}
\partial_{x_j}\bar{p}_1^3=\frac{6\mu x_j}{\delta^3(x')}+\frac{36\mu }{\delta^2(x')}\left(x_j+x_l-\frac{4x_{j}|x'|^{2}}{\delta(x')}\right)k^{2}(x), \quad j,l=1,2,~j\neq l,
\end{equation*}
can also cancel the other big terms
$$\partial_{x_3x_{3}}({\bf v}_{1}^{3})^{(j)}=\frac{6x_{j}}{\delta^3(x')},$$
which implies, for $j,l=1,2$ and $j\neq l$,
\begin{align}\label{estv131p13}
\Big|\mu\partial_{x_3x_3} ({\bf v}_{1}^3)^{(j)}-\partial_{x_j}\bar{p}_1^3\Big|=\left|-\frac{36\mu}{\delta^2(x')}\left(x_j+x_l-\frac{4x_{j}|x'|^{2}}{\delta(x')}\right)k^{2}(x)\right|\leq\frac{C|x'|}{\delta^2(x')}.
\end{align}
For other terms,
\begin{align}\label{estv131p132}
|\partial_{x_1x_1}({\bf v}^{3}_{1})^{(j)}|,|\partial_{x_2x_2}({\bf v}^{3}_{1})^{(j)}|\leq\frac{C|x_j|}{\delta^2(x')},\quad |\partial_{x_1x_1}({\bf v}^{3}_{1})^{(3)}|,|\partial_{x_2x_2}({\bf v}^{3}_{1})^{(3)}|\leq\frac{C}{\delta(x')}.
\end{align}
To summarise, we have
\begin{align}\label{estf13}
|{\bf f}_{1}^3|=\left|\mu\Delta{\bf v}_{1}^3-\nabla\bar{p}_1^3\right|\leq\frac{C|x'|}{\delta^{2}(x')},\quad \mbox{in}~\Omega_{2R}.
\end{align}
As a consequence, we obtain
\begin{align}\label{L2_f13}
\int_{\Omega_{s}(z')}|{\bf f}_{1}^{3}|^{2}\mathrm{d}x\leq\frac{Cs^2}{\delta^{3}(z')}(s^2+|z'|^2),
\end{align}
which is to prepare for applying \eqref{iterating1} to prove Proposition \ref{propu133D}.

\begin{lemma}\label{lem_energyw13}
Let $({\bf w}_i^3,q_i^3)$ be the solution to \eqref{w1}, for $\alpha=3$. Then
\begin{align}\label{energyw13}
\int_{\Omega}|\nabla {\bf w}_{i}^{3}|^{2}\mathrm{d}x\leq C,\quad\,i=1,2.
\end{align}
\end{lemma}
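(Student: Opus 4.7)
The plan is to verify condition \eqref{int-fw} from Lemma \ref{lemmaenergy} and then invoke that lemma, since the regularity hypotheses $\|{\bf v}_1^3\|_{C^2(\Omega\setminus\Omega_R)}\leq C$ and $\|\bar{p}_1^3\|_{C^1(\Omega\setminus\Omega_R)}\leq C$ are built into the construction. The main difficulty, compared with Lemma \ref{lem3.0}, is that $\nabla{\bf v}_1^3$ is no longer uniformly $L^2$-bounded on $\Omega_{2R}$; by \eqref{v13-0} its $L^2$-norm only has the bound $|\ln\varepsilon|^{1/2}$. Hence the integration-by-parts argument of Lemma \ref{lem3.0}, which moved the $x'$-derivatives onto ${\bf v}_1^\alpha$ and used the uniform $L^2$-bound on $\nabla_{x'}{\bf v}_1^\alpha$, cannot be reused. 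My substitute is a Poincar\'e-type argument that keeps the derivative on ${\bf w}_1^3$ and exploits the thinness of the channel to recover extra powers of $\delta(x')$.

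First I would sharpen the pointwise control of ${\bf f}_1^3$ component by component, using the cancellations already built into the choice of $\bar{p}_1^3$. For the third component, \eqref{estv133p13} reduces $({\bf f}_1^3)^{(3)}$ to $\mu\Delta_{x'}({\bf v}_1^3)^{(3)}$, so \eqref{estv131p132} yields $|({\bf f}_1^3)^{(3)}|\leq C/\delta(x')$. For $j=1,2$, combining \eqref{estv131p13} with \eqref{estv131p132} gives $|({\bf f}_1^3)^{(j)}|\leq C|x'|/\delta^2(x')$.

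Next, since ${\bf w}_1^3$ vanishes on $\partial D_1\cup\partial D_2$, for each fixed $x'$ the one-dimensional Poincar\'e inequality on the interval $(-\varepsilon/2-h_2(x'),\,\varepsilon/2+h_1(x'))$ of length $\delta(x')$, followed by Cauchy--Schwarz in $x_3$, yields
\begin{equation*}
\int|({\bf w}_1^3)^{(j)}(x',x_3)|\,dx_3\leq C\,\delta^{3/2}(x')\left(\int|\partial_{x_3}({\bf w}_1^3)^{(j)}(x',x_3)|^2\,dx_3\right)^{1/2}.
\end{equation*}
Plugging in the bounds on ${\bf f}_1^3$ and applying Cauchy--Schwarz in $x'$, the $j=1,2$ contributions are controlled by
\begin{equation*}
\int_{\Omega_R}|({\bf f}_1^3)^{(j)}||({\bf w}_1^3)^{(j)}|\,dx \leq C\left(\int_{|x'|<R}\frac{|x'|^2}{\delta(x')}\,dx'\right)^{1/2}\|\nabla{\bf w}_1^3\|_{L^2(\Omega)},
\end{equation*}
and the prefactor is uniformly bounded because $|x'|^2/\delta(x')\leq 1$. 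The $j=3$ contribution is handled identically, producing the factor $\int_{|x'|<R}\delta(x')\,dx'$, which is also trivially uniform.

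Summing over $j$ verifies \eqref{int-fw}, and Lemma \ref{lemmaenergy} then delivers the global energy bound \eqref{energyw13}. The main obstacle is precisely the breakdown of the direct scheme from Lemma \ref{lem3.0}; the key insight for overcoming it is that the quadratic vanishing $\delta(x')\sim\varepsilon+|x'|^2$ of the channel width supplies, via the channel-Poincar\'e inequality applied to ${\bf w}_1^3$ in the $x_3$-variable, enough extra powers of $\delta(x')$ to absorb the $|x'|/\delta^2(x')$ singularity of ${\bf f}_1^3$.
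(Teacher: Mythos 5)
Your proposal is correct, and it reaches \eqref{energyw13} by a genuinely different verification of \eqref{int-fw} than the paper's. The paper exploits the fact that each component of ${\bf f}_1^3=\mu\Delta{\bf v}_1^3-\nabla\bar p_1^3$ is a polynomial in $x_3$ with coefficients depending on $x'$: for $j=1,2$ it writes $({\bf f}_1^3)^{(j)}=\partial_{x_3}\big(A^{3j}_0(x')x_3+\tfrac13A^{3j}_2(x')x_3^3\big)$, checks that this explicit antiderivative has uniformly bounded $L^2$-norm on the neck (the extra factor $|x_3|\le\delta$ converts $|x'|/\delta^2$ into $|x'|/\delta$), and integrates by parts in $x_3$ using ${\bf w}_1^3=0$ on $\partial D_1\cup\partial D_2$; for $j=3$ it keeps the integration by parts in $x'$ as in Lemma \ref{lem3.0}, since $|\nabla_{x'}({\bf v}_1^3)^{(3)}|\le C|x'|/\delta$ is square-integrable. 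You instead keep ${\bf f}_1^3$ in pointwise form ($C|x'|/\delta^2$ for $j=1,2$ and the sharper $C/\delta$ for $j=3$, both of which follow from \eqref{estv131p13}--\eqref{estv131p132} and \eqref{estv133p13}) and apply the one-dimensional Poincar\'e inequality in the $x_3$-variable to ${\bf w}_1^3$, which vanishes on both channel walls, gaining the factor $\delta^{3/2}(x')$ before Cauchy--Schwarz in $x'$; the resulting prefactors $\int_{|x'|<R}|x'|^2/\delta\,dx'$ and $\int_{|x'|<R}\delta\,dx'$ are uniformly bounded. The two mechanisms extract the same gain from the thinness of the channel and the Dirichlet condition on its walls (the paper's antiderivative-plus-integration-by-parts step is essentially dual to your channel-Poincar\'e step), but your route is more elementary and more robust: it needs only the $L^\infty_{x_3}$ bounds on ${\bf f}_1^3$ rather than its polynomial structure in $x_3$, it treats all three components uniformly, and it incidentally records the componentwise refinement $|({\bf f}_1^3)^{(3)}|\le C/\delta(x')$, which is more accurate near $x'=0$ than the aggregate bound \eqref{estf13}.
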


\begin{proof}
Similarly as in Lemma \ref{lem3.0}, by virtue of Lemma \ref{lemmaenergy}, it suffice to proving 
\begin{align}\label{energyDw13}
\Big| \int_{\Omega_{r_{0}}}\sum_{j=1}^{3}({\bf f}_{1}^{3})^{(j)}({\bf w}_{1}^{3})^{(j)}\mathrm{d}x\Big|\leq\,C\left(\int_{\Omega}|\nabla {\bf w}_{1}^{3}|^2\mathrm{d}x\right)^{1/2},
\end{align}
where $r_{0}\in(R,\frac{3}{2}R)$ is fixed by \eqref{w1Dw1}.

First, for $j=1,2$, in view of \eqref{estv131p13}, 
\begin{align}\label{fw12}
\int_{\Omega_{r_{0}}} ({\bf f}_{1}^{3})^{(j)}({\bf w}_{1}^{3})^{(j)}\mathrm{d}x
=\int_{\Omega_{r_{0}}} ({\bf w}_1^3)^{(j)}(\mu\Delta({\bf v}^{3}_{1})^{(j)}-\partial_{x_{j}}\bar{p}_1^3)\mathrm{d}x.
\end{align}
Note that here we can not use the integration by parts with the respect to $x'$ any more as in Lemma \ref{lem3.0}, since after that the terms $|\partial_{x_1}({\bf v}^{3}_{1})^{(j)}|\leq\frac{C}{\delta(x')}$ are still too large to prove \eqref{energyDw13}, see for example \eqref{v13-0}. Observing from \eqref{estv131}, \eqref{estv132} and \eqref{estv131p132}, we can write $({\bf f}_{1}^{3})^{(j)}$ in the polynomial form
$$\mu\Delta({\bf v}^{3}_{1})^{(j)}-\partial_{x_{j}}\bar{p}_1^3:=A^{3j}_{0}(x')+A^{3j}_{2}(x')x_{3}^{2},$$
where $A^{3j}_{l}(x')$ are rational functions of $x_{1}$ and $x_{2}$, and in view of \eqref{estv131p13} and \eqref{estv131p132},  
$$\Big|A^{3j}_{0}(x')+A^{3j}_{2}(x')x_{3}^{2}\Big|\leq\frac{C|x'|}{\delta^{2}(x')}.$$
However, by simple differentiation, we know
$$\partial_{x_{3}}\Big(A^{3j}_{0}(x')x_{3}+\frac{1}{3}A^{3j}_{2}(x')x_{3}^{3}\Big)=A^{3j}_{0}(x')+A^{3j}_{2}(x')x_{3}^{2},$$
while at this moment,  
$$\int_{\Omega_{r_{0}}}\Big|A^{3j}_{0}(x')x_{3}+\frac{1}{3}A^{3j}_{2}(x')x_{3}^{3}\Big|^{2}\leq\,C\int_{\Omega_{r_{0}}}\frac{|x'|^{2}}{\delta^{2}(x')}\leq\,C.$$
So, we rewrite \eqref{fw12} as follows:
\begin{align*}
\int_{\Omega_{r_{0}}} ({\bf f}_{1}^{3})^{(j)}({\bf w}_{1}^{3})^{(j)}\mathrm{d}x
=\int_{\Omega_{r_{0}}} ({\bf w}_{1}^{3})^{(j)}\partial_{x_{3}}\Big(A^{3j}_{0}(x')x_{3}+\frac{1}{3}A^{3j}_{2}(x')x_{3}^{3}\Big)\mathrm{d}x.
\end{align*}
Since ${\bf w}=0$ on $\partial D_{1}\cup \partial D_{2}$, we apply the integration by parts with respect to $x_{3}$, instead of $x'$ used in Lemma \ref{lem3.0}. By H\"older inequality, we deduce 
\begin{align*}
\left|\int_{\Omega_{r_{0}}} ({\bf f}_{1}^{3})^{(j)}({\bf w}_{1}^{3})^{(j)}\mathrm{d}x\right|&\leq\int_{\Omega_{r_{0}}}|\partial_{x_{3}}({\bf w}_{1}^{3})^{(j)}|\Big|A^{3j}_{0}(x')x_{3}+\frac{1}{3}A^{3j}_{2}(x')x_{3}^{3}\Big|\mathrm{d}x\nonumber\\
&\leq\,C\left(\int_{\Omega}|\nabla {\bf w}_{1}^{3}|^2\mathrm{d}x\right)^{1/2}.
\end{align*}
Hence,
\begin{equation}\label{estf13w13}
\left|\sum_{j=1}^2\int_{\Omega_{r_{0}}} ({\bf f}_{1}^{3})^{(j)}({\bf w}_{1}^{3})^{(j)}\mathrm{d}x\right|\leq C\left(\int_{\Omega}|\nabla {\bf w}_{1}^{3}|^2\mathrm{d}x\right)^{1/2}.
\end{equation}

For $j=3$, in view of \eqref{estv133p13}, and by means of \eqref{estv133}, 
$$\int_{\Omega_{r_{0}}}|\nabla_{x'}({\bf v}^{3}_{1})^{(3)}|^{2}\mathrm{d}x\leq\,C\int_{\Omega_{r_{0}}}\frac{|x'|^{2}}{\delta^{2}(x')}\leq\,C,$$
so we can still use the integration by parts with respect to $x'$, similarly as in Lemma \ref{lemmaenergy}, and  use \eqref{w1Dw1} again to obtain 
\begin{align}\label{estf1333}
\left|\int_{\Omega_{r_{0}}} ({\bf f}_{1}^{3})^{(3)}({\bf w}_{1}^{3})^{(3)}\mathrm{d}x\right|
&=\left|\int_{\Omega_{r_{0}}} ({\bf w}_{1}^{3})^{(3)}(\mu\partial_{x_1x_1}({\bf v}^{3}_{1})^{(3)}+\mu\partial_{x_2x_2}({\bf v}^{3}_{1})^{(3)})\mathrm{d}x\right|\nonumber\\
&\leq\int_{\Omega_{r_{0}}}|\nabla_{x'}({\bf w}_{1}^{3})^{(3)}||\mu\partial_{x_1}({\bf v}^{3}_{1})^{(3)}+\mu\partial_{x_2}({\bf v}^{3}_{1})^{(3)}|\mathrm{d}x\nonumber\\
&\quad+\int_{\substack{|x'|=r_{0},\\-\epsilon/2-h_{2}(x')<x_{3}<\epsilon/2+h_{1}(x')}}|({\bf w}_{1}^{3})^{(3)}|\mathrm{d}x'\nonumber\\
&\leq C\left(\int_{\Omega}|\nabla ({\bf w}_{1}^{3})|^2\mathrm{d}x\right)^{1/2}.
\end{align}
Taking into account \eqref{estf13w13} and \eqref{estf1333}, we obtain \eqref{energyDw13}, so \eqref{int-fw}. Thanks to Lemma \ref{lemmaenergy}, the proof of Lemma \ref{lem_energyw13} is finished. 
\end{proof}

With \eqref{L2_f13} and \eqref{energyw13}, we may argue as before to complete the proof of Proposition \ref{propu133D}.

\begin{proof}[Proof of Proposition \ref{propu133D}.]
Substituting \eqref{L2_f13} into  \eqref{iterating1}, instead of \eqref{iteration3D}, 
\begin{align*}
\int_{\Omega_{t}(z')}|\nabla {\bf w}_{1}^{3}|^{2}\mathrm{d}x\leq\,&
\frac{C\delta^2(z')}{(s-t)^{2}}\int_{\Omega_{s}(z')}|\nabla {\bf w}_{1}^{3}|^2\mathrm{d}x+C\left((s-t)^{2}+\delta^{2}(z')\right)\frac{s^2}{\delta^3(z')}(s+|z'|^2).
\end{align*}
By the same iteration process used in Lemma \ref{lem3.1}, we deduce 
\begin{align*}
\int_{\Omega_{\delta}(z')}|\nabla {\bf w}_1^3|^{2}\mathrm{d}x\leq C\delta^2(z').
\end{align*}
Substituting this, together with \eqref{estf13}, into \eqref{W2pstokes} and  \eqref{W2pstokes}  yields
\begin{align*}
\|\nabla {\bf w}_1^3\|_{L^{\infty}(\Omega_{\delta/2}(z'))}
&\leq
\,C\left(\delta^{-\frac{3}{2}}\|\nabla {\bf w}_1^3\|_{L^{2}(\Omega_{\delta}(z'))}+\delta\|{\bf f}_1^3\|_{L^{\infty}(\Omega_{\delta}(z'))} \right)\nonumber\\
&\leq\,C\left(\delta^{1-\frac{3}{2}}(z')+\frac{|z'|}{\delta(z')}\right)
\leq\,\frac{C}{\sqrt{\delta(z')}},
\end{align*}
and 
\begin{align*}
&\|\nabla^2 {\bf w}_1^3\|_{L^{\infty}(\Omega_{\delta/2}(z'))}+\|\nabla q_1^3\|_{L^{\infty}(\Omega_{\delta/2}(z'))}\nonumber\\
&\leq
\,C\Big(\delta^{-\frac{5}{2}}\|\nabla {\bf w}_1^3\|_{L^{2}(\Omega_{\delta}(z'))}+\|{\bf f}_1^3\|_{L^{\infty}(\Omega_{\delta}(z'))}+\delta\|\nabla{\bf f}_1^3\|_{L^{\infty}(\Omega_{\delta}(z'))}\Big)\leq\,\frac{C}{\delta^{3/2}(z')},
\end{align*}
Recalling  ${\bf w}_1^3={\bf u}_1^3-{\bf v}_1^3$, $q_1^3=p_1^3-\bar p_1^3$, and using \eqref{v13upper1}--\eqref{p133D}, Proposition \ref{propu133D} is proved. 
\end{proof}

\subsection{Estimates of $|\nabla{\bf u}_1^\alpha|$ and $|p_1^\alpha|$, $\alpha=4,5,6$.}\label{subsec4.3}

For ${\bf u}_{1}^{4}$, because ${\boldsymbol\psi}_{4}$ does not include the component $x_{3}$, it is easier to construct ${\bf v}_{1}^{4}\in C^{2}(\Omega;\mathbb R^3)$. Set  
\begin{align*}
{\bf v}_{1}^{4}=
{\boldsymbol\psi}_{4}\left(k(x)+\frac{1}{2}\right),\quad \mbox{in}~\Omega_{2R},
\end{align*}
and $\|{\bf v}_{1}^{4}\|_{C^{2}(\Omega\setminus\Omega_{R})}\leq\,C$. Then, a simple calculation shows that
\begin{align*}
\begin{split}
\partial_{x_1}({\bf v}_{1}^{4})^{(1)}&=-\frac{2x_1x_2}{\delta(x')}k(x),\quad |\partial_{x_2}({\bf v}_{1}^{4})^{(1)}|\leq C,\quad\partial_{x_3}({\bf v}_{1}^{4})^{(1)}=\frac{x_2}{\delta(x')};\\
|\partial_{x_1}({\bf v}_{1}^{4})^{(2)}|&\leq C,\quad\partial_{x_2}({\bf v}_{1}^{4})^{(2)}=\frac{2x_1x_2}{\delta(x')}k(x),\quad\partial_{x_3}({\bf v}_{1}^{4})^{(2)}=-\frac{x_1}{\delta(x')},
\end{split}
\end{align*}
which directly gives
\begin{equation*}
\nabla\cdot{\bf v}_1^4=0,\quad\hbox{in}\ \Omega_{2R}.
\end{equation*}
Further, for the second order derivatives
\begin{equation*}
|\partial_{x_1x_1}({\bf v}_{1}^{4})^{(\alpha)}|,|\partial_{x_2x_2}({\bf v}_{1}^{4})^{(\alpha)}|\leq \frac{C|x'|}{\delta(x')},\quad\alpha=1,2,
\end{equation*}
so that
\begin{equation*}
\int_{\Omega_{R}}|\mu\Delta{\bf v}_1^4|^{2}\leq\,C \int_{\Omega_{R}}\frac{|x'|^{2}}{\delta^{2}(x')}\leq\,C.
\end{equation*}

\begin{proof}[Proof of Proposition \ref{propu4563D} for $\alpha=4$]
Since the above bounds for ${\bf v}_{1}^{4}$ are much better than the analogues of the case $\alpha=1,2$, and satisfy the conditions in Lemma \ref{lemmaenergy} with taking $\bar{p}_1^{4}=0$, the rest of the Proof of Proposition \ref{propu4563D}  for $\alpha=4$ is the 
same as that for Proposition \ref{propu113D}. We omit the details here.
\end{proof}

For ${\bf u}_{1}^{5}$ and ${\bf u}_{1}^{6}$, we construct  ${\bf v}_{1}^{5}\in C^{2}(\Omega;\mathbb R^3)$ in \eqref{v15}. By a careful calculation and using \eqref{partial_kx}, we have
\begin{align*}
\partial_{x_1}({\bf v}_{1}^{5})^{(1)}=-2x_{1}k^{2}(x)-&\frac{3}{5}\Big(k^2(x)-\frac{1}{4}\Big)\Big(\frac{8x_{1}}{\delta(x')}-\frac{8x_{1}^{3}}{\delta^{2}(x')}-\frac{50x_{1}}{3}k^{2}(x)\Big)\\
-&\frac{3}{5}k^2(x)\Big(\frac{4x_{1}}{\delta(x')}-\frac{16x_{1}^{3}}{\delta^{2}(x')}-\frac{100x_{1}}{3}k^{2}(x)\Big);\\
\partial_{x_2}({\bf v}_{1}^{5})^{(2)}=-\frac{3}{5}\Big(k^2(x)-&\frac{1}{4}\Big)\Big(\frac{4x_{1}}{\delta(x')}-\frac{8x_{1}x_{2}^{2}}{\delta^{2}(x')}\Big)+\frac{3}{5}\frac{16x_1x_2^2}{\delta^2(x')}k^{2}(x);
\end{align*}
and
\begin{align*}
\partial_{x_3}({\bf v}_{1}^{5})^{(3)}=-\frac{x_1}{\delta(x')}&+\frac{3}{5}\Big(k^2(x)-\frac{1}{4}\Big)\left(\frac{16x_{1}}{3\delta(x')}-\frac{8x_{1}|x'|^{2}}{\delta^{2}(x')}-30x_{1}k^{2}(x)\right)\\
&+\frac{3}{5}k^2(x)\left(\frac{32x_{1}}{3\delta(x')}-\frac{16x_{1}|x'|^{2}}{\delta^{2}(x')}-20x_{1}k^{2}(x)\right).
\end{align*}
The above calculation shows that
\begin{equation*}
\nabla\cdot{\bf v}_1^5=\partial_{x_1}({\bf v}_{1}^{5})^{(1)}+\partial_{x_2}({\bf v}_{1}^{5})^{(2)}+\partial_{x_3}({\bf v}_{1}^{5})^{(3)}=0,\quad\hbox{in}\ \Omega_{2R}.
\end{equation*}
For the other terms,
\begin{equation}\label{v1511}
|\partial_{x_2}({\bf v}_{1}^{5})^{(1)}|\leq \frac{C|x_2|}{\delta(x')},~~
\partial_{x_3}({\bf v}_{1}^{5})^{(1)}=\frac{9}{2}k(x)+\frac{6}{5}\left(\frac{1}{\delta(x')}-\frac{4x_{1}^2}{\delta^{2}(x')}\right)k(x)-20k^{3}(x);
\end{equation}
\begin{equation}\label{v1512}
|\partial_{x_1}({\bf v}_{1}^{5})^{(2)}|\leq \frac{C|x_2|}{\delta(x')},\quad
\partial_{x_3}({\bf v}_{1}^{5})^{(2)}=-\frac{24}{5}\frac{x_{1}x_2}{\delta^2(x')}k(x);
\end{equation}
and
\begin{equation}\label{v1513}
|\partial_{x_j}({\bf v}_{1}^{5})^{(j)}|\leq\frac{C|x'|}{\delta(x')},\quad|\partial_{x_1}({\bf v}_{1}^{5})^{(3)}|,|\partial_{x_2}({\bf v}_{1}^{5})^{(3)}|\leq C.
\end{equation}

For the second derivatives,
\begin{align*}
&\partial_{x_3x_3}({\bf v}_{1}^{5})^{(1)}=\frac{9}{2\delta(x')}+\frac{6}{5}\left(\frac{1}{\delta^{2}(x')}-\frac{4x_{1}^2}{\delta^{3}(x')}\right)-\frac{60}{\delta(x')}k^{2}(x);\\
&\partial_{x_3x_3}({\bf v}_{1}^{5})^{(2)}=-\frac{24}{5}\frac{x_{1}x_2}{\delta^3(x')};
\end{align*}
and
\begin{align*}
\partial_{x_3x_3}({\bf v}_{1}^{5})^{(3)}=\frac{48}{5}\left(\frac{2x_{1}}{\delta^{2}(x')}-\frac{3x_{1}|x'|^{2}}{\delta^{3}(x')}\right)k(x)-\frac{120x_{1}}{\delta(x')}k^{3}(x)
+\frac{9x_{1}}{\delta(x')}k(x).
\end{align*}
Note that all of these three terms above are large,
\begin{align}\label{laplacev15}
|\partial_{x_3x_3}({\bf v}_{1}^{5})^{(j)}|\leq\frac{C}{\delta^{2}(x')},~j=1,2,
\quad|\partial_{x_3x_3}({\bf v}_{1}^{5})^{(3)}|\leq\frac{C|x'|}{\delta^{2}(x')}.
\end{align}

Here we choose $\bar{p}_1^5\in C^{1}(\Omega)$ such that
\begin{equation*}
\bar{p}_1^5=\frac{6\mu}{5}\frac{ x_1}{\delta^2(x')}+\frac{72\mu}{5}\frac{ x_1}{\delta(x')}\left(\frac{2}{3}-\frac{|x'|^{2}}{\delta(x')}\right)k^{2}(x),\quad\mbox{in}~\Omega_{2R},
\end{equation*}
and $
\|\bar{p}_1^5\|_{C^{1}(\Omega\setminus\Omega_{R})}\leq C$.
A straightforward calculation gives
$$\partial_{x_1}\bar{p}_1^5=\frac{6\mu}{5}\left(\frac{1}{\delta^{2}(x')}-\frac{4x_{1}^2}{\delta^{3}(x')}\right)+\frac{72\mu}{5}\left(\frac{2}{3\delta(x')}-\frac{6x_{1}^{2}}{\delta^{2}(x')}-\frac{|x'|^{2}}{\delta^{2}(x')}+\frac{8x_{1}^{2}|x'|^{2}}{\delta^{3}(x')}\right)k^{2}(x);$$
$$\partial_{x_2}\bar{p}_1^5=-\frac{24\mu}{5}\frac{x_{1}x_{2}}{\delta^{3}(x')}+\frac{72\mu}{5}\left(-\frac{4}{3}\frac{x_{1}x_{2}}{\delta(x')}+\frac{2x_{1}x_{2}|x'|^{2}}{\delta^{2}(x')}-\frac{2x_{1}x_{2}}{\delta^{2}(x')}+\frac{2x_{1}x_{2}|x'|^{2}}{\delta^{3}(x')}\right)k^{2}(x);$$
and
$$\partial_{x_3}\bar{p}_1^5=\mu\left(\frac{96}{5}\frac{x_{1}}{\delta^{2}(x')}-\frac{144}{5}\frac{x_{1}|x'|^{2}}{\delta^{3}(x')}\right)k(x).$$

Consequently,
\begin{align}
&\mu\partial_{x_3x_3} ({\bf v}_{1}^5)^{(1)}-\partial_{x_1}\bar{p}_1^5=\frac{9\mu}{2\delta(x')}-\frac{72\mu}{5}\left(\frac{29}{6\delta(x')}-\frac{(6x_{1}^{2}+|x'|^{2})}{\delta^{2}(x')}+\frac{8x_{1}^{2}|x'|^{2}}{\delta^{3}(x')}\right)k^{2}(x);\label{estv151-p151}\\
&\mu\partial_{x_3x_3} ({\bf v}_{1}^5)^{(2)}-\partial_{x_2}\bar{p}_1^5=-\frac{72\mu x_{1}x_{2}}{5}\left(-\frac{4}{3\delta(x')}+\frac{2(|x'|^{2}-1)}{\delta^{2}(x')}+\frac{2|x'|^{2}}{\delta^{3}(x')}\right)k^{2}(x);\label{estv152-p152}
\end{align}
and
\begin{align}\label{estv153-p153}
\mu\partial_{x_3x_3} ({\bf v}_{1}^5)^{(3)}-\partial_{x_3}\bar{p}_1^5=\frac{9\mu x_{1}}{\delta(x')}k(x)-\frac{120\mu x_{1}}{\delta(x')}k^{3}(x).
\end{align}
Obviously, they all do not vanish, but they make $|{\bf f}_{1}^5|$ become smaller. 

In summary,
\begin{align*}
|{\bf f}_{1}^5|=\left|\mu\Delta{\bf v}_{1}^5-\nabla\bar{p}_1^5\right|\leq\frac{C}{\delta(x')},\quad\mbox{in}~\Omega_{2R}.
\end{align*}
This is much better than \eqref{laplacev15}. Moreover,
\begin{align}\label{L2_f15}
\int_{\Omega_{s}(z')}|{\bf f}_{1}^{5}|^{2}\mathrm{d}x\leq\frac{Cs^2}{\delta(z')}.
\end{align}
Notice that this bound is the same as in \eqref{f1alpha} for ${\bf f}_{1}^{1}$. So, in order to apply \eqref{iteration3D} and the iteration process as in Lemma \ref{lem3.1}, it remains to prove the boundedness of $E_{1}^{\alpha}$, $\alpha=5,6$.

\begin{lemma}\label{lem3.4}
Let $({\bf w}_i^\alpha,q_i^\alpha)$ be the solution to \eqref{w1}, for $\alpha=5,6$. Then
\begin{align}\label{energy_w15}
E_{1}^{\alpha}=\int_{\Omega}|\nabla{\bf w}_{1}^{\alpha}|^{2}\mathrm{d}x\leq\,C,\quad\,\alpha=5,6.
\end{align}
\end{lemma}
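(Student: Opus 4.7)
The plan is to mirror the structure of Lemma \ref{lem_energyw13}. By Lemma \ref{lemmaenergy}, together with the mean value trick \eqref{w1Dw1} producing some $r_{0}\in(R,3R/2)$, the energy bound \eqref{energy_w15} reduces to verifying the bilinear estimate \eqref{int-fw}, i.e.
$$\Big|\int_{\Omega_{r_0}} \sum_{j=1}^{3} ({\bf f}_1^5)^{(j)}({\bf w}_1^5)^{(j)}\,\mathrm{d}x\Big|\leq C \Big(\int_{\Omega}|\nabla {\bf w}_1^5|^2\,\mathrm{d}x\Big)^{1/2}.$$
The case $\alpha=6$ follows by the symmetry $x_1\leftrightarrow x_2$, so I focus on $\alpha=5$. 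A direct Cauchy--Schwarz fails here, because the bound $|{\bf f}_1^5|\leq C/\delta(x')$ only gives $\int_{\Omega_{r_0}}|{\bf f}_1^5|^2\lesssim|\ln\varepsilon|$. I would therefore split
$${\bf f}_1^5 = \mu\Delta_{x'}{\bf v}_1^5 + \bigl(\mu\partial_{x_3 x_3}{\bf v}_1^5 - \nabla \bar{p}_1^5\bigr),$$
and handle the two pieces by integration by parts in $x'$ and in $x_3$, respectively.

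For the tangential Laplacian piece, estimates \eqref{v1511}--\eqref{v1513} yield $|\nabla_{x'}{\bf v}_1^5|\leq C(1+|x'|/\delta(x'))$, and a quick change of variables gives $\int_{\Omega_{r_0}}|x'|^2/\delta^2(x')\,\mathrm{d}x\leq C$. Thus integration by parts in $x'$, followed by H\"older's inequality, produces a contribution bounded by $C\|\nabla{\bf w}_1^5\|_{L^2(\Omega)}$, the lateral boundary term at $\{|x'|=r_{0}\}$ being absorbed via \eqref{w1Dw1}. For the remaining piece, the crucial observation from \eqref{estv151-p151}--\eqref{estv153-p153} is that each component
$$g^{(j)}(x) := \mu\partial_{x_3 x_3}({\bf v}_1^5)^{(j)}-\partial_{x_j}\bar{p}_1^5$$
is a polynomial in $x_3$ (even in $x_3$ for $j=1,2$, odd for $j=3$) whose coefficients are rational functions of $x'$. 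Writing $g^{(j)}=\partial_{x_3}G^{(j)}$ with $G^{(j)}$ the primitive in $x_3$, the pointwise bound $|x_3|\leq\delta(x')$ in $\Omega$ combines with the cancellations built into $\bar{p}_1^5$ to yield $|G^{(j)}(x)|\leq C$ on $\Omega_{r_0}$. Since ${\bf w}_1^5=0$ on $\partial D_1\cup\partial D_2$, integration by parts in $x_3$ then gives
$$\int_{\Omega_{r_0}} g^{(j)}({\bf w}_1^5)^{(j)}\,\mathrm{d}x = -\int_{\Omega_{r_0}} G^{(j)}\,\partial_{x_3}({\bf w}_1^5)^{(j)}\,\mathrm{d}x \leq C\|\nabla{\bf w}_1^5\|_{L^2(\Omega)}.$$

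Summing over $j=1,2,3$ establishes \eqref{int-fw}, after which Lemma \ref{lemmaenergy} delivers \eqref{energy_w15}. The hard part is the $L^{\infty}$-bookkeeping for the primitives $G^{(j)}$: one must expand each $g^{(j)}$ in powers of $x_3/\delta(x')$, then use the identity $|x_3|/\delta(x')\leq 1$ repeatedly to cancel singular coefficients up to order $1/\delta^{3}(x')$ that appear after antidifferentiation. Concretely, a term of the form $B(x')\,x_3^2/\delta^2(x')$ with $|B|\leq C/\delta(x')$ produces a primitive $B(x')\,x_3^3/(3\delta^2(x'))$, whose modulus is bounded by $C|x_3|^{3}/\delta^{3}(x')\leq C$; analogous matching is exactly what the specific $k^{2}(x)$-corrective term in $\bar{p}_1^5$ was engineered to guarantee. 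Without that correction, a leading $1/\delta^{2}$ contribution in $g^{(j)}$ would survive antidifferentiation in $x_3$ and $|G^{(j)}|$ would blow up, breaking the argument.
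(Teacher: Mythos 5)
Your proposal is correct and follows essentially the same route as the paper: split ${\bf f}_1^5$ into the tangential Laplacian part (integration by parts in $x'$, with the lateral boundary term absorbed via \eqref{w1Dw1}) and the part $\mu\partial_{x_3x_3}({\bf v}_1^5)^{(j)}-\partial_{x_j}\bar p_1^5$, which the paper likewise writes as $\partial_{x_3}$ of a bounded primitive (the polynomials $A^{5j}_k(x')x_3^k$) and integrates by parts in $x_3$ using ${\bf w}=0$ on $\partial D_1\cup\partial D_2$, before invoking Lemma \ref{lemmaenergy}; the case $\alpha=6$ is handled by the same symmetry observation.
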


\begin{proof}

Similarly as in the proof of Lemma \ref{lem_energyw13}, we rewrite \eqref{estv151-p151}--\eqref{estv153-p153} into the form of polynomials as follows:
\begin{align*}
\mu\partial_{x_3x_3} ({\bf v}_{1}^5)^{(1)}-\partial_{x_1}\bar{p}_1^5&:=A^{51}_{0}(x')+A^{51}_{2}(x')x_{3}^{2}=\partial_{x_{3}}\Big(A^{51}_{0}(x')x_{3}+\frac{1}{3}A^{51}_{2}(x')x_{3}^{3}\Big),\\
\mu\partial_{x_3x_3} ({\bf v}_{1}^5)^{(2)}-\partial_{x_2}\bar{p}_1^5&:=A^{52}_{2}(x')x_{3}^{2}=\partial_{x_{3}}\Big(\frac{1}{3}A^{52}_{2}(x')x_{3}^{3}\Big),\\
\mu\partial_{x_3x_3} ({\bf v}_{1}^5)^{(3)}-\partial_{x_3}\bar{p}_1^5&:=A^{53}_{1}x_{3}+A^{53}_{3}x_3^{3}=\partial_{x_{3}}\Big(\frac{1}{2}A^{53}_{1}x_{3}^{2}+\frac{1}{4}A^{53}_{3}x_3^{4}\Big),
\end{align*}
where $A^{5j}_{k}(x')$ are polynomials of $x'$. Moreover, 
\begin{align*}
\Big|A^{51}_{0}(x')x_{3}+\frac{1}{3}A^{51}_{2}(x')x_{3}^{3}\Big|,\Big|\frac{1}{3}A^{52}_{2}(x')x_{3}^{3}\Big|,\Big|\frac{1}{2}A^{53}_{1}x_{3}^{2}+\frac{1}{4}A^{53}_{3}x_3^{4}\Big|\leq\,C.
\end{align*}
Applying the integration by parts with respect to $x_{3}$, and using ${\bf w}=0$ on $\partial D_{1}\cup \partial D_{2}$ and H\"older inequality, gives
\begin{align*}
&\left|\int_{\Omega_{r_{0}}} (\mu\partial_{x_3x_3} ({\bf v}_{1}^5)^{(j)}-\partial_{x_j}\bar{p}_1^5)({\bf w}_{1}^{5})^{(j)}\mathrm{d}x\right|\\
\leq&\,C\int_{\Omega_{r_{0}}}|\partial_{x_{3}}({\bf w}_{1}^{5})^{(j)}|\mathrm{d}x
\leq\,C\left(\int_{\Omega}|\nabla {\bf w}_{1}^{5}|^2\mathrm{d}x\right)^{1/2},\quad\,j=1,2,3.
\end{align*}
For other terms $\Delta_{x'}({\bf v}_{1}^5)^{(j)}$, since \eqref{v1511}--\eqref{v1513}, we still use the integration by parts with respect to $x'$ as  \eqref{estf1333} in Lemma \ref{lem_energyw13}.
\begin{align*}
\left|\int_{\Omega_{r_{0}}} ({\bf w}_{1}^{5})^{(j)}(\mu\partial_{x_1x_1}({\bf v}^{5}_{1})^{(j)}+\mu\partial_{x_2x_2}({\bf v}^{5}_{1})^{(j)})\mathrm{d}x\right|\leq C\left(\int_{\Omega}|\nabla ({\bf w}_{1}^{5})|^2\mathrm{d}x\right)^{1/2}.
\end{align*}
Thus,
$$\Big| \int_{\Omega_{r_{0}}}\sum_{j=1}^{3}({\bf f}_{1}^{5})^{(j)}({\bf w}_{1}^{5})^{(j)}\mathrm{d}x\Big|\leq\,C\left(\int_{\Omega}|\nabla {\bf w}_{1}^{5}|^2\mathrm{d}x\right)^{1/2}.
$$
Thanks to Lemma \ref{lemmaenergy}, we obtain \eqref{energy_w15} for $\alpha=5$.

While,  ${\bf v}_{1}^{6}\in C^{2}(\Omega;\mathbb R^3)$ is very similar to ${\bf v}_{1}^{5}$, namely, in $\Omega_{2R}$,
\begin{align*}
{\bf v}_{1}^{6}={\boldsymbol\psi}_{6}\left(k(x)+\frac{1}{2}\right)
+\frac{3}{5}\Big(k^2(x)-\frac{1}{4}\Big)(G(x),F_2(x),H_2(x))^{\mathrm T},
\end{align*}
and $\|{\bf v}_{1}^{6}\|_{C^{2}(\Omega\setminus\Omega_{R})}\leq\,C$, where $F_2(x),G(x)$, and $H_2(x)$ are defined in \eqref{defFGH}. Construct $\bar{p}_1^6\in C^{1}(\Omega)$ such that
\begin{equation*}
\bar{p}_1^6=\frac{6\mu}{5}\frac{ x_2}{\delta^2(x')}+\frac{72\mu}{5}\frac{ x_2}{\delta(x')}\left(\frac{2}{3}-\frac{|x'|^{2}}{\delta(x')}\right)k^{2}(x),\quad\mbox{in}~\Omega_{2R},
\end{equation*}
and $
\|\bar{p}_1^6\|_{C^{1}(\Omega\setminus\Omega_{R})}\leq C$.
Thus, the desired estimates on ${\bf v}_{1}^{6}$ and $\bar{p}_1^6$ are all the same with those of ${\bf v}_{1}^{5}$ and $\bar{p}_1^5$. We omit the details and finish the proof of Lemma \ref{lem3.4}. 
\end{proof}

\begin{proof}[Proof of Proposition \ref{propu4563D} for $\alpha=5,6$]
Notice that the above bounds for ${\bf v}_{1}^{\alpha}$ and $\bar{p}_1^{\alpha}$, for example, \eqref{L2_f15} and \eqref{energy_w15}, are the same as those of the case $\alpha=1,2$. Following exactly the same proof as for Proposition \ref{propu113D}, 
we have Proposition \ref{propu4563D} holds.
\end{proof}

 \subsection{Estimates of $C_i^\alpha$ in 3D}\label{subsecCi}
Define
\begin{align}\label{aijbj}
a_{ij}^{\alpha\beta}:=-
\int_{\partial D_j}{\boldsymbol\psi}_\beta\cdot\sigma[{\bf u}_{i}^\alpha,p_{i}^{\alpha}]\nu,\quad
b_{j}^{\beta}:=
\int_{\partial D_j}{\boldsymbol\psi}_\beta\cdot\sigma[{\bf u}_{0},p_{0}]\nu.
\end{align}
Then using the integration by parts, \eqref{equ_v1}, and \eqref{equ_v3}, we have 
\begin{align}\label{defaij}
a_{ij}^{\alpha\beta}=\int_{\Omega} \left(2\mu e({\bf u}_{i}^{\alpha}), e({\bf u}_{j}^{\beta})\right)\mathrm{d}x,\quad
b_{j}^{\beta}=-\int_{\Omega} \left(2\mu e({\bf u}_{0}),e({\bf u}_{j}^\beta)\right)\mathrm{d}x,
\end{align}
and thus, \eqref{equ-decompositon} can be rewritten as
\begin{align}\label{systemC}
\begin{cases}
\sum\limits_{\alpha=1}^{6}C_{1}^{\alpha}a_{11}^{\alpha\beta}
+\sum\limits_{\alpha=1}^{6}C_{2}^{\alpha}a_{21}^{\alpha\beta}
-b_{1}^{\beta}=0,&\\\\
\sum\limits_{\alpha=1}^{6}C_{1}^{\alpha}a_{12}^{\alpha\beta}
+\sum\limits_{\alpha=1}^{6}C_{2}^{\alpha}a_{22}^{\alpha\beta}
-b_{2}^{\beta}=0.
\end{cases}\quad \beta=1,\dots,6.
\end{align}
To prove Proposition \ref{lemCialpha3D}, we need to prove the estimates of  $a_{11}^{\alpha\beta}$ and $b_1^\beta$, $\alpha,\beta=1,\dots,6$.
 
 First, for the diagonal elements $a_{11}^{\alpha\alpha}$, $\alpha=1,\dots,6$.
 \begin{lemma}\label{lema113D}
 We have
 \begin{align}
\frac{1}{C}|\ln\varepsilon|&\leq a_{11}^{\alpha\alpha}\leq C|\ln\varepsilon|,\quad\alpha=1,2,5,6;\label{esta11113D}\\
 \frac{1}{C\varepsilon}&\leq a_{11}^{33}\leq \frac{C}{\varepsilon},
 \quad\mbox{and}\quad\frac{1}{C}\leq a_{11}^{44}\leq C.\label{esta11333D}
 \end{align}
 \end{lemma}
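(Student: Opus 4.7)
The plan is to express $a_{11}^{\alpha\alpha}$ via the energy identity \eqref{defaij}, namely $a_{11}^{\alpha\alpha}=2\mu\int_\Omega |e({\bf u}_1^\alpha)|^2\,dx$, and attack the upper and lower bounds separately. The upper bounds should follow routinely from $|e(\cdot)|\leq|\nabla(\cdot)|$ combined with the pointwise gradient bounds supplied by Propositions \ref{propu113D}--\ref{propu4563D}. Integrating these against the volume element (writing $\delta(x')\sim\varepsilon+|x'|^2$), one gets $\int_{\Omega_R}\delta(x')^{-2}\,dx\sim|\ln\varepsilon|$, handling $\alpha=1,2,5,6$, and $\int_{\Omega_R}|x'|^2\delta(x')^{-4}\,dx\sim\varepsilon^{-1}$, which is the dominant contribution for $\alpha=3$; the bounded pointwise estimates give $\int_{\Omega_R}(1+|x'|^2\delta^{-2})\,dx\sim 1$ for $\alpha=4$. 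The $\Omega\setminus\Omega_R$ part is $O(1)$ by the standard elliptic estimates invoked throughout the paper.

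For the lower bounds I would use the decomposition ${\bf u}_1^\alpha={\bf v}_1^\alpha+{\bf w}_1^\alpha$ together with the $L^2$ triangle inequality
\[
\|e({\bf u}_1^\alpha)\|_{L^2(\Omega)}\geq\|e({\bf v}_1^\alpha)\|_{L^2(\Omega)}-\|e({\bf w}_1^\alpha)\|_{L^2(\Omega)}.
\]
Since Lemmas \ref{lem3.0}, \ref{lem_energyw13}, and \ref{lem3.4} provide $\|\nabla {\bf w}_1^\alpha\|_{L^2(\Omega)}\leq C$, the task reduces to producing a matching lower bound on $\|e({\bf v}_1^\alpha)\|_{L^2(\Omega)}$ from the explicit formulas \eqref{v1alpha}, \eqref{v133D}, \eqref{v15} and their $\alpha=6$ analogue. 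I would locate a subregion of the form $\{|x'|<R_0,\ |x_3|\in[\delta/8,\delta/4]\}$ in which one off-diagonal component of $e({\bf v}_1^\alpha)$ retains its leading singular term after symmetrization: for $\alpha=1,2$, $|e_{\alpha 3}({\bf v}_1^\alpha)|\gtrsim 1/\delta(x')$ because $\partial_{x_3}({\bf v}_1^\alpha)^{(\alpha)}=1/\delta$ dominates the bounded cross term $\partial_{x_\alpha}({\bf v}_1^\alpha)^{(3)}$; for $\alpha=5,6$, $|e_{13}({\bf v}_1^\alpha)|\gtrsim 1/\delta(x')$ in the same strip because the singular $\tfrac{6}{5\delta}k$ piece of $\partial_{x_3}({\bf v}_1^\alpha)^{(1)}$ is not matched by $\partial_{x_1}({\bf v}_1^\alpha)^{(3)}$; and for $\alpha=3$ one extracts a lower bound $|e_{13}({\bf v}_1^3)|\gtrsim|x'||x_3|/\delta^3(x')$ from the explicit coefficient $-6x_1\varepsilon/\delta^3$ of $x_3$ in $\partial_{x_1}({\bf v}_1^3)^{(3)}$. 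Integration yields respectively $|\ln\varepsilon|$ and $\varepsilon^{-1}$. As these dominate the $O(1)$ bound on $\|e({\bf w}_1^\alpha)\|_{L^2}$ for $\varepsilon$ small, the triangle inequality gives $\|e({\bf u}_1^\alpha)\|_{L^2}\geq\tfrac12\|e({\bf v}_1^\alpha)\|_{L^2}$ and the lower estimates follow.

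The main obstacle I expect is the verification of non-cancellation in the symmetrizations $e_{ij}({\bf v}_1^\alpha)=\tfrac12(\partial_i({\bf v}_1^\alpha)^{(j)}+\partial_j({\bf v}_1^\alpha)^{(i)})$: although $|\nabla{\bf v}_1^\alpha|$ carries the singularity, the divergence-free corrections (the $\boldsymbol\psi_3x_\alpha(k^2-\tfrac14)$, $F_i,G,H_i$ etc.\ terms) introduce second derivative contributions that must be checked to not kill the leading order. This needs a component-by-component expansion near the chosen strip, but because the cutoff in $|x_3|$ is kept strictly below $\delta/2$ the factor $(k^2-\tfrac14)$ stays uniformly away from $0$, so each cancellation computation is a short Taylor expansion. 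The genuinely different case is $\alpha=4$, where $\|\nabla{\bf v}_1^4\|_{L^2}=O(1)$ and no singularity is available. For this case I would argue by contradiction and compactness: a sequence $\varepsilon_n\to 0$ with $a_{11}^{44}(\varepsilon_n)\to 0$ would force $e({\bf u}_1^4)\to 0$ in $L^2$, and passing to the touching limit together with the boundary conditions ${\bf u}_1^4=\boldsymbol\psi_4$ on $\partial D_1$ and $0$ on $\partial D_2\cup\partial D$ would produce a rigid motion achieving both, which is impossible. This yields a uniform $a_{11}^{44}\geq 1/C$ for $\varepsilon$ small.
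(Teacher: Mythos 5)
Your proposal is correct and, for most of the lemma, follows the same route as the paper: the upper bounds are obtained exactly as you describe, by integrating the pointwise bounds of Propositions \ref{propu113D}--\ref{propu4563D}, and for $\alpha=1,2,3,5,6$ the lower bounds isolate a singular component of $\nabla{\bf v}_1^\alpha$ and show the remainder cannot cancel it; the only methodological difference there is that the paper controls the remainder through the pointwise estimates of Propositions \ref{propu113D}--\ref{propu133D} (e.g.\ for $\alpha=3$ the cross term $\int_{\Omega_R}|x'|\,\delta^{-5/2}\le C$), while you use only the global energy bound $\|\nabla{\bf w}_1^\alpha\|_{L^2(\Omega)}\le C$ and the $L^2$ triangle inequality, which works equally well since the main term's $L^2$ norm diverges. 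Two details in your plan need adjusting, though neither is a real gap: for $\alpha=3$ the singular strain comes from $\partial_{x_3}({\bf v}_1^3)^{(1)}=6x_1x_3/\delta^3$ in \eqref{estv132}, not from $\partial_{x_1}({\bf v}_1^3)^{(3)}$; the small coefficient $-6x_1\varepsilon/\delta^3$ in \eqref{estv133} only serves to show the symmetrization does not kill it, and on your strip $|x_3|\in[\delta/8,\delta/4]$ one must also check the cubic-in-$x_3$ part of \eqref{estv133} (it is harmless since $6t-24t^3=6t(1-4t^2)>0$ for $t\le 1/4$); and for $\alpha=5,6$ the putatively singular factor in \eqref{v1511} is $\tfrac{6}{5}\big(\tfrac1{\delta}-\tfrac{4x_1^2}{\delta^2}\big)k$, which changes sign where $4x_1^2\sim\delta$, so the pointwise bound $|e_{13}({\bf v}_1^5)|\gtrsim 1/\delta$ holds only after also restricting $x'$ to a fixed sub-cone such as $\{4x_1^2\le\delta/2\}$ — this still yields the $|\ln\varepsilon|$ lower bound after integration. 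The genuinely different step is the lower bound for $a_{11}^{44}$: the paper gets it directly and quantitatively by working away from the neck, $a_{11}^{44}\gtrsim\int_{\Omega_R\setminus\Omega_{R/2}}|\partial_{x_3}({\bf v}_1^4)^{(2)}|^2\ge 1/C$, whereas your compactness/contradiction argument (vanishing strain along $\varepsilon_n\to0$ forces a rigid limit equal to ${\boldsymbol\psi}_4$ on $\partial D_1^0$ and $0$ on $\partial D$, which is impossible) is also legitimate but softer: it requires uniform $H^1$ bounds and passage of traces on the $\varepsilon$-dependent domains and produces no explicit constant, while the paper's direct estimate is one line and quantitative.
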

 
\begin{proof}
By means of Proposition \ref{propu113D} and \eqref{defaij}, 
\begin{align*}
a_{11}^{11}\leq C\int_{\Omega}|\nabla {\bf u}_1^1|^2\mathrm{d}x\leq\,C\int_{\Omega_R}\frac{\mathrm{d}x}{\delta^2(x')}\ +C\leq C|\ln\varepsilon|.
\end{align*}
On the other hand, thanks to \eqref{v11-1} and Proposition \ref{propu113D},
\begin{align*}
a_{11}^{11}\geq &\,\frac{1}{C}\int_{\Omega_{R}}|e({\bf u}_1^1)|^2\mathrm{d}x\geq \frac{1}{C}\int_{\Omega_{R}}|\partial_{x_{3}} ({\bf v}_1^1)^{(1)}|^2\mathrm{d}x-C
\geq\, \frac{1}{C}\int_{\Omega_{R}} \frac{\mathrm{d}x}{\delta^{2}(x')} -C\geq\frac{|\ln\varepsilon|}{C}.
\end{align*}
For $\alpha=2,5,6$, the proof is the same and so is omitted. Thus, \eqref{esta11113D} is proved. 

For $a_{11}^{33}$, from Proposition \ref{propu133D}, we have
\begin{align*}
a_{11}^{33}\leq C\int_{\Omega}|\nabla {\bf u}_1^3|^2\mathrm{d}x\leq C\int_{\Omega_R}\left(\frac{1}{\delta(x')}+\frac{|x'|}{\delta^2(x')}\right)^2\mathrm{d}x+C\leq \frac{C}{\varepsilon}.
\end{align*}
For the lower bound, 
\begin{align*}
a_{11}^{33}&\geq\frac{1}{C}\int_{\Omega_R}|e({\bf u}_1^3)|^2\mathrm{d}x\geq\frac{1}{C}\int_{\Omega_R}\sum_{j=1}^{2}|\partial_{x_{3}}({\bf v}_1^3)^{(j)}+\partial_{x_{3}}({\bf w}_1^3)^{(j)})|^2\mathrm{d}x\\
&\geq\frac{1}{C}\int_{\Omega_R}\sum_{j=1}^{2}|\partial_{x_{3}}({\bf v}_1^3)^{(j)}|^2\mathrm{d}x-\frac{2}{C}\int_{\Omega_R}\sum_{j=1}^{2}|\partial_{x_{3}}({\bf v}_1^3)^{(j)}||\partial_{x_{3}}({\bf w}_1^3)^{(j)}|\mathrm{d}x.
\end{align*}
With the aid of \eqref{estv132},
\begin{align*}
&\int_{\Omega_R}\sum_{j=1}^{2}|\partial_{x_{3}}({\bf v}_1^3)^{(j)}|^2 \mathrm{d}x=\int_{\Omega_R}\frac{36|x'|^2x_3^2}{\delta^6(x')}\mathrm{d}x\geq \frac{1}{C}\int_{|x'|<R}\frac{|x'|^2}{\delta^3(x')}\mathrm{d}x'\\
&\geq\frac{1}{C}\int_{0}^R\frac{r^3}{(\varepsilon+r^2)^3}\ dr\geq\frac{1}{C\varepsilon}\int_0^{\frac{R}{\sqrt{\varepsilon}}}\frac{r^3}{(1+r^2)^3}\ dr\geq\frac{1}{C\varepsilon},
\end{align*}
and by Proposition \ref{propu133D},
\begin{equation*}
\int_{\Omega_R}\sum_{j=1}^{2}|\partial_{x_{3}}({\bf v}_1^3)^{(j)}||\partial_{x_{3}}({\bf w}_1^3)^{(j)}|\mathrm{d}x\leq\int_{\Omega_R}\frac{|x'|}{\delta^{5/2}(x')}\mathrm{d}x\leq C.
\end{equation*}
Consequently, $a_{11}^{33}\geq\frac{1}{C\varepsilon}$. Estimate \eqref{esta11333D} for $a_{11}^{33}$ is proved.

On account of Proposition \ref{propu4563D}, we readily have
\begin{align*}
a_{11}^{44}\leq C\int_{\Omega}|\nabla {\bf u}_1^4|^2\mathrm{d}x\leq C\int_{\Omega_R}\frac{|x'|^2}{\delta^2(x')}\, \mathrm{d}x+C\leq C,
\end{align*}
while
\begin{align*}
a_{11}^{44}\geq \int_{\Omega_R\setminus\Omega_{R/2}}|\partial_{x_3}({\bf v}_{1}^{4})^{(2)}|^{2}\geq\int_{\Omega_R\setminus\Omega_{R/2}}\frac{|x'|^{2}}{\delta^{2}(x')}\mathrm{d}x\geq \frac{1}{C}.
\end{align*}
So that estimate \eqref{esta11333D} is proved. 
\end{proof}

For the other terms of $(a_{11}^{\alpha\beta})_{6\times6}$, and $b_1^\beta$, we have
 \begin{lemma}\label{lema114563D}
We have
\begin{align*}
&|a_{11}^{\alpha\beta}|=|a_{11}^{\beta\alpha}|\leq C,~\alpha,\beta=1,\dots,6,~\alpha\neq\beta;\\
&|a_{11}^{\alpha\beta}+a_{21}^{\beta\alpha}|\leq C,\quad\alpha,\beta=1,\dots,6;
\end{align*}
and
\begin{equation*}
|b_1^\beta|\leq C,\quad\beta=1,\dots,6.
\end{equation*}
\end{lemma}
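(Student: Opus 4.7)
The plan is to establish the three claims by exploiting the $L^\infty$-$L^1$ duality in combination with the key pointwise and local energy estimates from Propositions \ref{propu113D}--\ref{propu4563D} and, crucially, the \emph{cancellation} identity from Proposition \ref{prop1.7} that $\|\nabla({\bf u}_{1}^{\alpha}+{\bf u}_{2}^{\alpha})\|_{L^{\infty}(\Omega)}\leq C$ and $\|\nabla{\bf u}_{0}\|_{L^{\infty}(\Omega)}\leq C$.

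I would begin with the sum bound $|a_{11}^{\alpha\beta}+a_{21}^{\beta\alpha}|\leq C$, since it is the simplest. Using the symmetry of the bilinear form $e(\cdot):e(\cdot)$ and the representation \eqref{defaij}, I rewrite
$$
a_{11}^{\alpha\beta}+a_{21}^{\beta\alpha}=\int_{\Omega}2\mu\,e({\bf u}_{1}^{\alpha}):e({\bf u}_{1}^{\beta}+{\bf u}_{2}^{\beta})\,dx,
$$
so that H\"older's inequality in $L^{\infty}\cdot L^{1}$ reduces the claim to $\|\nabla{\bf u}_{1}^{\alpha}\|_{L^{1}(\Omega)}\leq C$. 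From the pointwise estimates in Propositions \ref{propu113D}--\ref{propu4563D}, the worst case is $|\nabla{\bf u}_{1}^{\alpha}(x)|\leq C(\delta(x')^{-1}+|x'|\delta(x')^{-2})$ on $\Omega_{R}$ (attained at $\alpha=3$). Integrating in $x_3$ first contributes a factor $\delta(x')$, giving
$$
\int_{\Omega_{R}}\!\!\left(\tfrac{1}{\delta(x')}+\tfrac{|x'|}{\delta^{2}(x')}\right)\!dx\leq C\!\int_{|x'|<R}\!\!\left(1+\tfrac{|x'|}{\delta(x')}\right)\!dx'\leq C,
$$
and the contribution from $\Omega\setminus\Omega_{R}$ is trivially bounded. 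The estimate for $b_{1}^{\beta}$ is identical, with ${\bf u}_{0}$ playing the role of ${\bf u}_{1}^{\beta}+{\bf u}_{2}^{\beta}$.

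For the off-diagonal bound $|a_{11}^{\alpha\beta}|\leq C$, $\alpha\neq\beta$, the symmetry $a_{11}^{\alpha\beta}=a_{11}^{\beta\alpha}$ is immediate from \eqref{defaij}. For the bound itself, the naive Cauchy--Schwarz would yield $\sqrt{a_{11}^{\alpha\alpha}\,a_{11}^{\beta\beta}}$, which is of order $|\ln\varepsilon|$ by Lemma \ref{lema113D}, hence too large. Instead, I decompose ${\bf u}_{1}^{\gamma}={\bf v}_{1}^{\gamma}+{\bf w}_{1}^{\gamma}$ and expand
$$
a_{11}^{\alpha\beta}=\int_{\Omega}2\mu\,e({\bf v}_{1}^{\alpha}):e({\bf v}_{1}^{\beta})\,dx+\text{(cross terms)}+\int_{\Omega}2\mu\,e({\bf w}_{1}^{\alpha}):e({\bf w}_{1}^{\beta})\,dx.
$$
The cross and $ww$-contributions are controlled by the worst-case bound $\|\nabla{\bf w}_{1}^{\gamma}\|_{L^{\infty}(\Omega_{\delta/2}(z'))}\leq C\delta^{-1/2}(z')$ from Propositions \ref{propu133D}, \ref{propu4563D}, which together with the pointwise bounds on $\nabla{\bf v}_{1}^{\gamma}$ yields integrands of size at most $|x'|\delta^{-3/2}+\delta^{-1}$, integrating to $O(1)$. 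The main $vv$-term is treated case by case using the explicit formulas \eqref{v11-1}--\eqref{v11-3}, \eqref{estv131}--\eqref{estv1333}, and \eqref{v1511}--\eqref{v1513}: the structural observation is that the dominant singular factor $1/\delta(x')$ of $\nabla{\bf v}_{1}^{\alpha}$ sits in one fixed component determined by $\alpha$, and when $\alpha\neq\beta$, the scalar product $e({\bf v}_{1}^{\alpha}):e({\bf v}_{1}^{\beta})$ never pairs two such $1/\delta$ factors; instead each $1/\delta$ meets a factor of size at most $|x'|/\delta$, giving integrands dominated by $|x'|^{2}/\delta^{2}$, which integrates to $O(1)$.

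The main obstacle is the off-diagonal bound, since it requires verifying the structural cancellation for each of the fifteen pairs $(\alpha,\beta)$, and the formulas \eqref{v15}, \eqref{defFGH} for ${\bf v}_{1}^{5},{\bf v}_{1}^{6}$ make the computation particularly long. The pressure functions $\bar{p}_{1}^{\gamma}$ do not enter the argument, since $a_{11}^{\alpha\beta}$ is expressed purely in terms of the strain tensor via \eqref{defaij}.
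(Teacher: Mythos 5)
Your treatment of the second and third claims is fine and is essentially the standard route: writing $a_{11}^{\alpha\beta}+a_{21}^{\beta\alpha}=\int_{\Omega}2\mu\,e({\bf u}_1^{\alpha}):e({\bf u}_1^{\beta}+{\bf u}_2^{\beta})\,dx$ and $b_1^{\beta}=-\int_{\Omega}2\mu\,e({\bf u}_0):e({\bf u}_1^{\beta})\,dx$, and pairing the $L^{\infty}$ bound of Proposition \ref{prop1.7} with $\|\nabla{\bf u}_1^{\alpha}\|_{L^1(\Omega)}\leq C$ (the worst case $C(\delta^{-1}+|x'|\delta^{-2})$ is indeed in $L^1(\Omega_R)$) does close those parts, as does your handling of the $vw$- and $ww$-terms in the off-diagonal expansion.

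However, the key "structural observation" on which your off-diagonal bound rests is false, and this is a genuine gap. Take $(\alpha,\beta)=(1,5)$: by \eqref{v11-1} the singular entry of $\nabla{\bf v}_1^{1}$ is $\partial_{x_3}({\bf v}_1^{1})^{(1)}=\delta(x')^{-1}$, and by \eqref{v1511} the same entry of $\nabla{\bf v}_1^{5}$ contains $\frac{6}{5}\big(\delta(x')^{-1}-4x_1^2\delta(x')^{-2}\big)k(x)$, which on the set $\{|x_3|\sim\delta(x')\}$ is genuinely of size $\delta(x')^{-1}$, not $|x'|/\delta(x')$. So two $1/\delta$ factors do pair in $e({\bf v}_1^{1}):e({\bf v}_1^{5})$, the integrand is of size $\delta^{-2}$, and $\int_{\Omega_R}\delta^{-2}\,dx\sim|\ln\varepsilon|$; a magnitude-only estimate can therefore give at best $|a_{11}^{15}|\leq C|\ln\varepsilon|$, which is not the statement and, more importantly, would destroy the cofactor estimates used later in Proposition \ref{lemCialpha3D}. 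Similarly, for $(\alpha,\beta)=(1,3)$ the dominant product $\partial_{x_3}({\bf v}_1^{1})^{(1)}\partial_{x_3}({\bf v}_1^{3})^{(1)}=\frac{6x_1}{\delta^3(x')}k(x)$ has $\int_{\Omega_R}\frac{|x_1||x_3|}{\delta^4}\,dx\sim\int_{|x'|<R}\frac{|x_1|}{\delta^2}\,dx'\sim\varepsilon^{-1/2}$, so it is not absolutely integrable uniformly in $\varepsilon$ either (the same happens for $(3,5)$). What saves these integrals in the paper's proof is not size but \emph{parity}: the dominant products are odd in $x_1$ (or $x_2$), respectively odd in $x_3$, and the neck region $\Omega_R$ (with $h_1=h_2=\frac12|x'|^2$) is symmetric under the corresponding reflections, so their integrals vanish exactly — e.g.\ for $a_{11}^{15}$ one uses $\int_{-\frac{\varepsilon+|x'|^2}{2}}^{\frac{\varepsilon+|x'|^2}{2}}x_3\,dx_3=0$. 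Your proposal never invokes any such cancellation, so the case-by-case verification you describe would fail precisely at the pairs $(1,3)$, $(2,3)$, $(3,5)$, $(3,6)$, $(1,5)$, $(2,6)$. To repair it you should isolate the dominant $vv$-products explicitly from \eqref{v11-1}--\eqref{v11-3}, \eqref{estv131}--\eqref{estv1333}, \eqref{v1511}--\eqref{v1513} and kill them by oddness before estimating the remainder in absolute value.
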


\begin{proof}
We shall estimate $a_{11}^{12}$, $a_{11}^{13}$, $a_{11}^{15}$, and $a_{11}^{35}$, for examples. It follows from Proposition \ref{propu113D} and \eqref{defaij} that 
\begin{align*}
a_{11}^{12}=\int_{\Omega_R}(2\mu e({\bf v}_{1}^{1}), e({\bf v}_{1}^{2}))\mathrm{d}x+C.
\end{align*}
From \eqref{v11-1}--\eqref{v11-3}, one can see that 
\begin{align*}
\left|\int_{\Omega_R}(2\mu e({\bf v}_{1}^{1}), e({\bf v}_{1}^{2}))\mathrm{d}x\right|\leq \int_{\Omega_R}\frac{C|x'|}{\delta(x')}\mathrm{d}x\leq C.
\end{align*}
Thus, we obtain $|a_{11}^{12}|\leq C$. 

For $a_{11}^{13}$, by using \eqref{defaij}, Propositions \ref{propu113D} and \ref{propu133D}, we have 
\begin{align*}
a_{11}^{13}=\int_{\Omega_R}(2\mu e({\bf v}_{1}^{1}), e({\bf v}_{1}^{3}))\mathrm{d}x+C.
\end{align*}
In view of \eqref{v11-1}--\eqref{v11-3} and \eqref{estv131}--\eqref{estv133}, we find that the biggest term in $(2\mu e({\bf v}_{1}^{1}), e({\bf v}_{1}^{3}))$ is 
\begin{equation*}
\partial_{x_3}({\bf v}_{1}^{1})^{(1)}\cdot\partial_{x_3}({\bf v}_{1}^{3})^{(1)}=\frac{6x_{1}}{\delta^3(x')}k(x),
\end{equation*}
which is an odd function with respect to $x_1$, and thus the  integral is $0$. The integral of the rest terms is bounded by a constant. Hence, $|a_{11}^{13}|\leq C$. Similarly, from \eqref{estv131}--\eqref{estv133} and \eqref{v1511}--\eqref{v1513}, it follows that the biggest term in $(2\mu e({\bf v}_{1}^{3}), e({\bf v}_{1}^{5}))$ is 
\begin{equation*}
\partial_{x_3}({\bf v}_{1}^{3})^{(1)}\cdot\partial_{x_3}({\bf v}_{1}^{5})^{(1)}=\frac{6x_{1}}{\delta^2(x')}k(x)\Big(\frac{9}{2}k(x)+\frac{6}{5}\left(\frac{1}{\delta(x')}-\frac{4x_{1}^2}{\delta^{2}(x')}\right)k(x)-20k^{3}(x)\Big),
\end{equation*}
which is also an odd function with respect to $x_1$. Combining Propositions \ref{propu133D} and \ref{propu4563D}, the rest terms is bounded by a constant and thus, $|a_{11}^{35}|\leq C$. 

Finally, for $a_{11}^{15}$,  by Propositions \ref{propu113D} and \ref{propu4563D}, we have 
\begin{align*}
a_{11}^{15}=\int_{\Omega_R}(2\mu e({\bf v}_{1}^{1}), e({\bf v}_{1}^{5}))\mathrm{d}x+C.
\end{align*}
Note that the biggest term in $(2\mu e({\bf v}_{1}^{1}), e({\bf v}_{1}^{5}))$ is 
\begin{equation*}
\partial_{x_3}({\bf v}_{1}^{1})^{(1)}\cdot\partial_{x_3}({\bf v}_{1}^{5})^{(1)}=\frac{1}{\delta(x')}\Big(\frac{9}{2}k(x)+\frac{6}{5}\left(\frac{1}{\delta(x')}-\frac{4x_{1}^2}{\delta^{2}(x')}\right)k(x)-20k^{3}(x)\Big).
\end{equation*}
A direct calculation yields 
\begin{align*}
&\left|\int_{\Omega_R}\partial_{x_3}({\bf v}_{1}^{1})^{(1)}\cdot\partial_{x_3}({\bf v}_{1}^{5})^{(1)}\mathrm{d}x\right|\\
&\leq\frac{6}{5}\int_{\Omega_R}\frac{1}{\delta(x')}\left(\frac{1}{\delta(x')}-\frac{4x_{1}^2}{\delta^{2}(x')}\right)k(x)\mathrm{d}x+C\\
&=\frac{6}{5}\int_{|x'|\leq R}\frac{1}{\delta^2(x')}\left(\frac{1}{\delta(x')}-\frac{4x_{1}^2}{\delta^{2}(x')}\right)\mathrm{d}x'\int_{-\frac{\varepsilon+x_1^2}{2}}^{\frac{\varepsilon+x_1^2}{2}}x_3\mathrm{d}x_3+C\leq C.
\end{align*}
Hence, $|a_{11}^{15}|\leq C$. The rest terms can be dealt with by the same way, so we omit the details here. 
\end{proof}

Now let us prove Proposition \ref{lemCialpha3D}.

\begin{proof}[Proof of Proposition \ref{lemCialpha3D}.] By using the trace theorem, we can prove the boundedness of $C_i^\alpha$, similarly as in \cite{BLL2}. To solve $|C_1^\alpha-C_2^\alpha|$, we use the first six equations in \eqref{systemC}:
\begin{align*}
a_{11}(C_{1}-C_{2})=f:=b_1-(a_{11}+a_{21})C_{2},
\end{align*}
where $a_{11}:=(a_{11}^{\alpha\beta})_{\alpha,\beta=1}^6$, $C_{i}:=(C_{i}^{1}, \dots, C_i^6)^{\mathrm{T}}$, and  $b_1:=(b_1^1,\dots, b_1^6)^{\mathrm{T}}$. From \cite{BLL2}, the matrix $a_{11}$  is positive definite. Denote $\mathrm{cof}(A)_{\alpha\beta}$ by the cofactor of $A:=(a_{11}^{\alpha\beta})_{6\times6}$ for simplicity of notation. By  Lemma \ref{lema113D} and Lemma \ref{lema114563D}, we have 
\begin{equation*}
\frac{1}{C\varepsilon}|\ln\varepsilon|^4\leq\det{A}\leq\frac{C}{\varepsilon}|\ln\varepsilon|^4;
\end{equation*}
and
\begin{align*}
\frac{1}{C\varepsilon}|\ln\varepsilon|^3&\leq\mathrm{cof}(A)_{\alpha\alpha}\leq \frac{C}{\varepsilon}|\ln\varepsilon|^3,\quad\alpha=1,2,5,6,\\
\frac{1}{C}|\ln\varepsilon|^4&\leq\mathrm{cof}(A)_{33}\leq C|\ln\varepsilon|^4,\quad\frac{1}{C\varepsilon}|\ln\varepsilon|^4\leq\mathrm{cof}(A)_{44}\leq\frac{C}{\varepsilon}|\ln\varepsilon|^4;\\
&|\mathrm{cof}(A)_{\alpha\beta}|\leq \frac{C}{\varepsilon}|\ln\varepsilon|^2,\quad\alpha,\beta=1,2,5,6,~\alpha\neq\beta,\\
&|\mathrm{cof}(A)_{\alpha 3}|, |\mathrm{cof}(A)_{3\alpha}|\leq C|\ln\varepsilon|^3,\quad\alpha=1,2,5,6,\\
&|\mathrm{cof}(A)_{43}|, |\mathrm{cof}(A)_{34}|\leq C|\ln\varepsilon|^4,\\
&|\mathrm{cof}(A)_{\alpha 4}|, |\mathrm{cof}(A)_{4\alpha}|\leq \frac{C}{\varepsilon}|\ln\varepsilon|^3,\quad\alpha=1,2,5,6.
\end{align*}
Thus, by Cramer's rule, we readily obtain
\begin{equation*}
|C_1^\alpha-C_2^\alpha|\leq\frac{C\,\mathrm{cof}(A)_{\alpha\alpha}}{\det{A}}\leq\frac{C}{|\ln\varepsilon|},\quad\alpha=1,2,5,6,
\end{equation*}
and
\begin{equation*}
|C_1^3-C_2^3|\leq\frac{C}{\det{A}}|\ln\varepsilon|^4\leq C\varepsilon,\quad |C_1^4-C_2^4|\leq\frac{C\,\mathrm{cof}(A)_{44}}{\det{A}}\leq C.
\end{equation*}
Proposition \ref{lemCialpha3D} is proved.
\end{proof}

\section{Proof of Theorem \ref{mainthm2} and the estimates of Cauchy stress tensor.}\label{sec5}

In this section, we shall prove the lower bounds of $|\nabla{\bf u}|$ on the segment $\overline{P_{1}P_{2}}$ in dimension three, and then show the proof of the estimates of Cauchy stress tensor $\sigma[{\bf u},p]$. 

We first decompose
$${\bf u}^{*}=\sum_{\alpha=1}^{6}C_{*}^{\alpha}{\bf u}^{*\alpha}+{\bf u}_{0}^{*},\quad\mbox{and}~ p^{*}=\sum_{\alpha=1}^{6}C_{*}^{\alpha}p^{*\alpha}+p_{0}^{*},$$
where $({\bf u}^{*}, p^{*})$ verifies \eqref{maineqn touch}, $({\bf u}^{*\alpha},p^{*\alpha})$ satisfies
\begin{equation}\label{def valpha*}
\begin{cases}
\nabla\cdot\sigma[{\bf u}^{*\alpha},p^{*\alpha}]=0,\quad\nabla\cdot {\bf u}^{*\alpha}=0,&\mathrm{in}~\Omega^{0},\\
{\bf u}^{*\alpha}={\boldsymbol\psi}_{\alpha},&\mathrm{on}~\partial{D}_{1}^{0}\cup\partial{D}_{2}^{0},\\
{\bf u}^{*\alpha}=0,&\mathrm{on}~\partial{D},
\end{cases}
\end{equation}
and $({\bf u}_{0}^{*},p_{0}^{*})$ satisfies
\begin{equation}\label{equ_v3*}
\begin{cases}
\nabla\cdot\sigma[{\bf u}_0^{*},p_0^{*}]=0,\quad\nabla\cdot {\bf u}_0^{*}=0,&\mathrm{in}~\Omega^{0},\\
{\bf u}_{0}^{*}=0,&\mathrm{on}~\partial{D}_{1}^{0}\cup\partial{D_{2}^{0}},\\
{\bf u}_{0}^{*}={\boldsymbol\varphi},&\mathrm{on}~\partial{D}.
\end{cases}
\end{equation}
Then we define
\begin{align}\label{blowupfactor1}
\tilde b_{j}^{\beta}:=
\int_{\partial D_j}{\boldsymbol\psi}_\beta\cdot\sigma[{\bf u}_{b},p_{b}]\nu,
\end{align}
where 
\begin{equation*}
{\bf u}_{b}:=\sum_{\alpha=1}^{6}C_{2}^{\alpha}{\bf u}^{\alpha}+{\bf u}_{0},\quad p_{b}:=\sum_{\alpha=1}^{6}C_{2}^{\alpha}p^{\alpha}+p_{0},
\end{equation*}
and
$${\bf u}^{\alpha}:={\bf u}_{1}^{\alpha}+{\bf u}_{2}^{\alpha},\quad p^{\alpha}:=p_{1}^{\alpha}+p_{2}^{\alpha}$$ satisfies
\begin{equation}\label{def valpha}
\begin{cases}
\nabla\cdot\sigma[{\bf u}^{\alpha},p^{\alpha}]=0,\quad\nabla\cdot {\bf u}^{\alpha}=0,&\mathrm{in}~\Omega,\\
{\bf u}^{\alpha}={\boldsymbol\psi}_{\alpha},&\mathrm{on}~\partial{D}_{1}\cup\partial D_{2},\\
{\bf u}^{\alpha}=0,&\mathrm{on}~\partial{D},
\end{cases}
\end{equation}
and $({\bf u}_{0},p_{0})$ satisfies
\begin{equation*}
\begin{cases}
\nabla\cdot\sigma[{\bf u}_0,p_0]=0,\quad\nabla\cdot {\bf u}_0=0,&\mathrm{in}~\Omega,\\
{\bf u}_{0}=0,&\mathrm{on}~\partial{D}_{1}\cup\partial{D}_{2},\\
{\bf u}_{0}={\boldsymbol\varphi},&\mathrm{on}~\partial{D}.
\end{cases}
\end{equation*}

\begin{prop}\label{protildeb}
For $\beta=1,\dots,6$, we have 
\begin{equation*}
|\tilde b_{1}^{\beta}[{\boldsymbol\varphi}]-\tilde b_{1}^{*\beta}[{\boldsymbol\varphi}]|\leq\frac{C}{|\ln\varepsilon|},
\end{equation*}
where $\tilde b_{1}^{\beta}[{\boldsymbol\varphi}]$ and $\tilde b_{1}^{*\beta}[{\boldsymbol\varphi}]$ are defined in \eqref{blowupfactor1} and \eqref{blowupfactor}, respectively. 
\end{prop}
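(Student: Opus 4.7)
The plan is to decompose $\tilde b_1^\beta-\tilde b_1^{*\beta}$ via the linear representations of ${\bf u}_b$ and ${\bf u}^*$ and reduce the proposition to estimating (a) the differences of constants $|C_2^\alpha-C_*^\alpha|$, and (b) the convergence of non-singular boundary-stress integrals as $\varepsilon\to 0$. Introducing
\begin{align*}
\hat A_1^{\alpha\beta}:=\int_{\partial D_1}{\boldsymbol\psi}_\beta\cdot\sigma[{\bf u}^\alpha,p^\alpha]\nu=-(a_{11}^{\alpha\beta}+a_{21}^{\alpha\beta}),\quad \hat B_1^\beta:=\int_{\partial D_1}{\boldsymbol\psi}_\beta\cdot\sigma[{\bf u}_0,p_0]\nu=b_1^\beta,
\end{align*}
and the evident analogues $\hat A_1^{*\alpha\beta}$, $\hat B_1^{*\beta}$ on $\partial D_1^0$ built from $({\bf u}^{*\alpha},p^{*\alpha})$ and $({\bf u}_0^*,p_0^*)$, the proposition splits as
\begin{align*}
\tilde b_1^\beta-\tilde b_1^{*\beta}=\sum_{\alpha=1}^{6}(C_2^\alpha-C_*^\alpha)\hat A_1^{\alpha\beta}+\sum_{\alpha=1}^{6}C_*^\alpha\bigl(\hat A_1^{\alpha\beta}-\hat A_1^{*\alpha\beta}\bigr)+\bigl(\hat B_1^\beta-\hat B_1^{*\beta}\bigr).
\end{align*}

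The second and third summands are the easier piece. Proposition~\ref{prop1.7} gives $\|\nabla{\bf u}^\alpha\|_{L^\infty(\Omega)}+\|p^\alpha\|_{L^\infty(\Omega)}+\|\nabla{\bf u}_0\|_{L^\infty(\Omega)}+\|p_0\|_{L^\infty(\Omega)}\leq C$, since the boundary data of ${\bf u}^\alpha={\bf u}_1^\alpha+{\bf u}_2^\alpha$ and of ${\bf u}_0$ agree on $\partial D_1$ and on $\partial D_2$, preventing any stress concentration in the narrow neck; the same holds on $\Omega^0$. In particular $|\hat A_1^{\alpha\beta}|$, $|\hat B_1^\beta|$, $|C_*^\alpha|\leq C$ uniformly in $\varepsilon$. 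Extending ${\bf u}^\alpha$ by ${\boldsymbol\psi}_\alpha$ inside $D_1\cup D_2$ (and ${\bf u}^{*\alpha}$ by ${\boldsymbol\psi}_\alpha$ inside $D_1^0\cup D_2^0$, and similarly ${\bf u}_0,{\bf u}_0^*$ by $0$) produces pairs of Lipschitz functions on $D$ whose differences are supported in a shell of volume $O(\varepsilon)$; standard Stokes stability then gives $|\hat A_1^{\alpha\beta}-\hat A_1^{*\alpha\beta}|+|\hat B_1^\beta-\hat B_1^{*\beta}|\leq C\sqrt{\varepsilon}$, far smaller than $C/|\ln\varepsilon|$.

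The essential step is the bound $|C_2^\alpha-C_*^\alpha|\leq C/|\ln\varepsilon|$. Adding the two block-rows of \eqref{systemC} and using the energy identity $a_{i1}^{\alpha\beta}+a_{i2}^{\alpha\beta}=\int_\Omega 2\mu\,e({\bf u}_i^\alpha){:}e({\bf u}^\beta)\,\mathrm{d}x$ (immediate from \eqref{def valpha} by integration by parts) recasts the system as
\begin{align*}
\sum_{\alpha=1}^{6}C_2^\alpha M^{\alpha\beta}=(b_1+b_2)^\beta+E^\beta,\qquad M^{\alpha\beta}:=\int_\Omega 2\mu\,e({\bf u}^\alpha){:}e({\bf u}^\beta)\,\mathrm{d}x,
\end{align*}
where $E^\beta=-\sum_\alpha(C_1^\alpha-C_2^\alpha)\int_\Omega 2\mu\,e({\bf u}_1^\alpha){:}e({\bf u}^\beta)\,\mathrm{d}x$ is controlled via Proposition~\ref{lemCialpha3D} together with the thin-neck cancellation $\int_{\Omega_R}\mathrm{d}x/\delta(x')=O(1)$, yielding $|E^\beta|\leq C/|\ln\varepsilon|$. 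The limit vector $(C_*^\alpha)$ satisfies the parallel system $\sum_\alpha C_*^\alpha M_*^{\alpha\beta}=(b_1+b_2)_*^\beta$ on $\Omega^0$; both Gram matrices $M,M_*$ are symmetric positive definite with spectral bounds uniform in $\varepsilon$ (by Korn's inequality and the linear independence of $\{{\boldsymbol\psi}_\alpha\}$), and the regularity block gives $|M-M_*|+|(b_1+b_2)-(b_1+b_2)_*|=O(\sqrt\varepsilon)$. Matrix inversion then yields the desired $C/|\ln\varepsilon|$ bound on $|C_2^\alpha-C_*^\alpha|$, and substitution into the decomposition of the first step completes the proof.

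The principal obstacle is uniform control of $E^\beta$ across all six values of $\alpha$. For $\alpha\in\{1,2,5,6\}$ the bound $|C_1^\alpha-C_2^\alpha|\leq C/|\ln\varepsilon|$ from Proposition~\ref{lemCialpha3D} combines directly with the $O(1)$ integral to deliver the stated decay; $\alpha=3$ uses the stronger $|C_1^3-C_2^3|\leq C\varepsilon$; the delicate case is $\alpha=4$, where Proposition~\ref{lemCialpha3D} only furnishes $|C_1^4-C_2^4|\leq C$, so the required $1/|\ln\varepsilon|$ decay must be extracted by exploiting the decoupling of the rotational mode ${\boldsymbol\psi}_4$ from the singular translational modes in the thin-neck geometry, which forces the corresponding off-diagonal entries of $M^{\alpha\beta}$ and of $\hat A_1^{4\beta}$ to carry the missing smallness.
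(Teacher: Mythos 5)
Your decomposition of $\tilde b_1^\beta-\tilde b_1^{*\beta}$ is essentially the paper's \eqref{difference bi beta} with the roles of $C_2^\alpha$ and $C_*^\alpha$ interchanged, and the two ``regularity'' pieces are in principle treatable: the paper handles them through Lemmas \ref{es b1 b1* beta=1} and \ref{lem valpha1}, which rest on the boundary convergence Lemma \ref{lem difference v11} (auxiliary limit functions, a maximum modulus argument for Stokes outside a cylinder of radius $\varepsilon^{1/4}$, boundary estimates). Your appeal to ``standard Stokes stability'' with rate $\sqrt{\varepsilon}$ glosses over that work — the actual rates obtained are $\rho_3^\beta(\varepsilon)$, only $\varepsilon^{1/4}$ for $\beta=3$ — but this part is fixable and does not affect the final $1/|\ln\varepsilon|$ bound.

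The genuine gap is in your ``essential step'' $|C_2^\alpha-C_*^\alpha|\le C/|\ln\varepsilon|$, specifically the mode $\alpha=4$. In your reduced system the error $E^\beta$ contains the term $(C_1^4-C_2^4)\int_\Omega 2\mu\,e({\bf u}_1^4){:}e({\bf u}^\beta)\,\mathrm{d}x$; Proposition \ref{lemCialpha3D} only gives $|C_1^4-C_2^4|\le C$, and for $\beta=4$ the pairing equals $a_{11}^{44}+a_{12}^{44}$, whose diagonal part is of order one by Lemma \ref{lema113D}, so $E^4=O(1)$ rather than $O(1/|\ln\varepsilon|)$; likewise the first sum in your decomposition multiplies $C_2^4-C_*^4$ by the non-small entry $\hat A_1^{44}$. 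Your closing sentence concedes this and asserts that a ``decoupling of the rotational mode'' forces the missing smallness onto off-diagonal entries, but no smallness is available on the diagonal, and no argument is given. The paper closes exactly this hole with the symmetry hypotheses $({\rm S_{H}})$ and $({\rm S_{{\boldsymbol\varphi}}})$ (the standing assumptions of Section \ref{sec5}, even though they are not repeated in the statement), which you never invoke: they yield $C_1^\alpha=-C_2^\alpha$ for $\alpha=1,2,3$ and the exact identity $C_1^\alpha=C_2^\alpha$ for $\alpha=4,5,6$ (see \eqref{C1456}), annihilate all off-diagonal entries $a^{\alpha\beta}$ (see \eqref{symaalphabeta}), and reduce \eqref{C1+C2_1} to the diagonal system \eqref{C1+C2_matrix}; Proposition \ref{propC2D} then gives $\big|\frac{C_1^\alpha+C_2^\alpha}{2}-C_*^\alpha\big|\le C/|\ln\varepsilon|$, whence $|C_2^\alpha-C_*^\alpha|\le C/|\ln\varepsilon|$ without ever needing smallness of $C_1^4-C_2^4$. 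Without this symmetry input (or a genuinely new argument for the $\alpha=4$ interactions), your scheme does not deliver the required bound, so the proof is incomplete at its central point.
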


By using \eqref{blowupfactor} and \eqref{blowupfactor1}, we have 
\begin{align}\label{difference bi beta}
\tilde b_{1}^{\beta}[{\boldsymbol\varphi}]-\tilde b_{1}^{*\beta}[{\boldsymbol\varphi}]&=\int_{\partial D_{1}}{\boldsymbol\psi}_\beta\cdot\sigma[{\bf u}_{0},p_{0}]\nu-\int_{\partial D_{1}^{0}}{\boldsymbol\psi}_\beta\cdot\sigma[{\bf u}_{0}^*,p_{0}^*]\nu\nonumber\\
&\quad+\sum_{\alpha=1}^{6}C_{2}^{\alpha}\Bigg(\int_{\partial D_{1}}{\boldsymbol\psi}_\beta\cdot\sigma[{\bf u}^{\alpha},p^{\alpha}]\nu-\int_{\partial D_{1}^{0}}{\boldsymbol\psi}_\beta\cdot\sigma[{\bf u}^{*\alpha},p^{*\alpha}]\nu\Bigg)\nonumber\\
&\quad+\sum_{\alpha=1}^{6}\Big(C_{2}^{\alpha}-C_{*}^{\alpha}\Big)\int_{\partial D_{1}^{0}}\frac{\partial v^{*\alpha}}{\partial \nu}\Big|_{+}\cdot{\boldsymbol\psi}_\beta.
\end{align}
Thus, one can see that  to prove Proposition \ref{protildeb}, it suffices to get the convergence of ${\bf u}^\alpha$, $p^\alpha$, ${\bf u}_0$, $p_0$, and $C_{2}^\alpha$. We shall deal with them one by one in the following subsections.

\subsection{Convergence of ${\bf u}_1^\alpha$ and $p_1^\alpha$} 

For $\alpha=1,\dots,6$, let us define $({\bf u}_{1}^{*\alpha},p_1^{*\alpha})$ satisfying
\begin{equation}\label{equ_ui*alpha}
\begin{cases}
\nabla\cdot\sigma[{\bf u}_{1}^{*\alpha},p_{1}^{*\alpha}]=0,\quad\nabla\cdot {\bf u}_{1}^{*\alpha}=0,&\mathrm{in}~\Omega^0,\\
{\bf u}_{1}^{*\alpha}={\boldsymbol\psi}_{\alpha},&\mathrm{on}~\partial{D}_{1}^0\setminus\{0\},\\
{\bf u}_{1}^{*\alpha}=0,&\mathrm{on}~\partial{{D}_{2}^0}\cup\partial{D}.
\end{cases}
\end{equation}	
For each $\alpha$, we will prove that ${\bf u}_{1}^{\alpha}\rightarrow {\bf u}_{1}^{*\alpha}$ when $\varepsilon\rightarrow 0$, with proper convergence rates.
Define  the auxiliary function $k^{*}(x)$ as the limit of $k(x)$ as $\varepsilon\rightarrow0$. Namely, $k^{*}(x)=\frac{1}{2}$ on $\partial{D}_{1}^{0}$, $k^{*}(x)=-\frac{1}{2}$ on $\partial{D}_{2}^{0}\cup\partial{D}$ and
\begin{equation*}
k^{*}(x)=\frac{x_{3}}{|x'|^2},\quad\hbox{in}\ \Omega_{2R}^{0},\quad \|k^{*}(x)\|_{C^{2}(\Omega^{0}\setminus\Omega_{R}^{0})}\leq\,C,
\end{equation*}
where $\Omega_r^{0}:=\left\{(x',x_{3})\in \mathbb{R}^{3}~\big|~-\frac{|x'|^2}{2}<x_{3}<\frac{|x'|^2}{2},~|x'|<r\right\}$, $r<R$.
Denote
$$\rho_{3}^{\alpha}(\varepsilon):=\begin{cases}
\varepsilon^{1/2},\quad\alpha=1,2,5,6,\\
\varepsilon^{1/4},\quad\alpha=3,\\
\varepsilon^{2/3},\quad\alpha=4.
\end{cases}$$ 
Then we have the following lemma.
\begin{lemma}\label{lem difference v11}
	Let $({\bf u}_{1}^{\alpha},p_1^{\alpha})$ and $({\bf u}_{1}^{*\alpha},p_1^{*\alpha})$ satisfy  \eqref{equ_v1} and \eqref{equ_ui*alpha}, respectively. Then the following boundary estimates hold
\begin{align}\label{boundaryup}
	|\nabla({\bf u}_{1}^{\alpha}-{\bf u}_{1}^{*\alpha})(x)|_{x\in\partial D}+|(p_{1}^{\alpha}-p_{1}^{*\alpha})(x)|_{x\in\partial D}&\leq
	\,C\rho_{3}^{\alpha}(\varepsilon). 
\end{align}
\end{lemma}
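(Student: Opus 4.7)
The plan is to obtain an $H^1$ energy estimate for a suitable difference of ${\bf u}_1^\alpha$ and ${\bf u}_1^{*\alpha}$ with rate $\rho_3^\alpha(\varepsilon)$ on a common region, and then to pass to a pointwise $C^1$ bound on $\partial D$ via standard boundary regularity for the Stokes system. Since $\partial D$ lies at a fixed positive distance from the narrow neck where ${\bf u}_1^\alpha$ and ${\bf u}_1^{*\alpha}$ disagree most, and since both solutions vanish on $\partial D$, the regularity step will be routine once the global energy bound is in hand.

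Because ${\bf u}_1^\alpha$ and ${\bf u}_1^{*\alpha}$ are defined on the distinct domains $\Omega$ and $\Omega^0$, I would first extend ${\bf u}_1^{*\alpha}$ to $\Omega$ by mirroring the Section~\ref{subsec-constuction} construction. Using the limit Keller-type function $k^*$ introduced just before Lemma~\ref{lem difference v11}, build a touching-case auxiliary pair $({\bf v}_1^{*\alpha}, \bar p_1^{*\alpha})$ (divergence free, matching the boundary values of ${\bf u}_1^{*\alpha}$). Then use a smooth cutoff $\chi$, equal to $1$ outside $\Omega_{3R/2}$ and $0$ on $\Omega_R$, to glue $({\bf v}_1^\alpha, \bar p_1^\alpha)$ on the inside with $({\bf u}_1^{*\alpha}, p_1^{*\alpha})$ on the outside, producing an extension $(\mathcal{U}_1^\alpha, \mathcal{P}_1^\alpha)$ of $({\bf u}_1^{*\alpha}, p_1^{*\alpha})$ to $\Omega$ that takes the correct Dirichlet data on $\partial\Omega$. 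The difference $\tilde{\bf u} := {\bf u}_1^\alpha - \mathcal{U}_1^\alpha$, $\tilde p := p_1^\alpha - \mathcal{P}_1^\alpha$ then solves a Stokes problem in $\Omega$ with zero Dirichlet data on $\partial\Omega$ and inhomogeneity $\tilde{\bf f}$ supported in $\Omega_{2R}$. Multiplying by $\tilde{\bf u}$ and integrating by parts yields
$$\|\nabla \tilde{\bf u}\|_{L^2(\Omega)} \leq C \|\tilde{\bf f}\|_{L^2(\Omega_{2R})},$$
where $\tilde{\bf f}$ contains the pointwise difference $(\mu\Delta{\bf v}_1^\alpha - \nabla\bar p_1^\alpha) - (\mu\Delta{\bf v}_1^{*\alpha} - \nabla\bar p_1^{*\alpha})$ together with cutoff-commutator terms. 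These can be estimated using the explicit formulae of Section~\ref{sec_estimate3D} and Taylor expansion in $\varepsilon$ of $k$, $k^*$, and $\delta(x')$; the resulting $L^2$ integral computes to order $\rho_3^\alpha(\varepsilon)^2$, with the exponent reflecting the singularity strength of ${\bf v}_1^\alpha$.

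For the boundary estimate on $\partial D$, consider a tubular neighborhood $\mathcal{N}$ of $\partial D$ disjoint from $\mathrm{supp}\,\tilde{\bf f}$; there $(\tilde{\bf u}, \tilde p)$ solves the homogeneous Stokes system with $\tilde{\bf u} = 0$ on $\partial D$. The Agmon--Douglis--Nirenberg and Solonnikov $W^{2,p}$ interior--boundary estimates combined with the Sobolev embedding $W^{2,p} \hookrightarrow C^1$ for $p > 3$ give
$$\|\nabla \tilde{\bf u}\|_{L^\infty(\partial D)} + \|\tilde p - c\|_{L^\infty(\partial D)} \leq C \|\nabla \tilde{\bf u}\|_{L^2(\mathcal{N})} \leq C \rho_3^\alpha(\varepsilon)$$
for a suitable constant $c$ (absorbed by a compatible normalization of the two pressures). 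The main obstacle is the sharp pointwise comparison of the Section~\ref{sec_estimate3D} auxiliary inhomogeneities with their touching analogs: the $\varepsilon/2$ shift between $D_i$ and $D_i^0$ interacts with the divergence-free cubic corrections in ${\bf v}_1^\alpha$ in a way that is particularly delicate for $\alpha = 3, 5, 6$, and careful cancellations are needed to extract the optimal exponents $\varepsilon^{1/4}$ (for $\alpha = 3$), $\varepsilon^{1/2}$ (for $\alpha = 1, 2, 5, 6$), and $\varepsilon^{2/3}$ (for $\alpha = 4$); this is precisely the step where the distinct behaviours of the six basis elements of $\Psi$ manifest in the final convergence rates.
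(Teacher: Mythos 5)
Your plan hinges on the claim that the difference $\tilde{\bf u}={\bf u}_1^\alpha-\mathcal U_1^\alpha$ solves a Stokes problem whose forcing has $L^2$ norm of order $\rho_3^\alpha(\varepsilon)$, so that a global energy estimate plus boundary regularity near $\partial D$ gives \eqref{boundaryup}. This quantitative step fails. With your gluing, $\mathcal U_1^\alpha={\bf v}_1^\alpha$ in $\Omega_R$, so there the forcing is exactly ${\bf f}_1^\alpha=\mu\Delta{\bf v}_1^\alpha-\nabla\bar p_1^\alpha$, and from \eqref{fLinfty} and \eqref{estf13} one computes $\|{\bf f}_1^\alpha\|_{L^2(\Omega_R)}\sim|\ln\varepsilon|^{1/2}$ for $\alpha=1,2$ and $\sim\varepsilon^{-1/2}$ for $\alpha=3$; nothing of order $\rho_3^\alpha(\varepsilon)$ comes out. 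More fundamentally, no rearrangement can rescue an $H^1$-smallness statement over the neck: near the touching point $k-k^*$ and $\nabla({\bf v}_1^\alpha-{\bf v}_1^{*\alpha})$ are of size $O(1)$ and $O(1/\varepsilon)$ respectively, the domains $\Omega$ and $\Omega^0$ themselves differ there (and on thin crescents $D_i^0\setminus D_i$ along $\partial D_i$, where ${\bf u}_1^{*\alpha}$ is not even defined, so your extension is not well posed and $\tilde{\bf u}$ does not vanish on $\partial D_1\cup\partial D_2$), and in fact $|\nabla({\bf u}_1^\alpha-{\bf u}_1^{*\alpha})|\sim\varepsilon^{-1}$ on a set of volume $\sim\varepsilon^2$, so the energy of the difference over the neck is bounded below by a constant. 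Hence the intermediate bound $\|\nabla\tilde{\bf u}\|_{L^2(\mathcal N)}\le C\rho_3^\alpha(\varepsilon)$ cannot be reached by this route, and the final "careful cancellations" you invoke for $\alpha=3,5,6$ are not an identifiable mechanism.

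The paper's proof avoids energy comparison across the neck altogether. It works in the common region $V=D\setminus\overline{D_1\cup D_2\cup D_1^0\cup D_2^0}$ with a small cylinder $\mathcal C_{\varepsilon^{\theta}}$ around the touching point removed, and bounds $|{\bf u}_1^\alpha-{\bf u}_1^{*\alpha}|$ pointwise on each piece of $\partial(V\setminus\mathcal C_{\varepsilon^\theta})$: $O(\varepsilon)$ on $\partial D_i\setminus D_i^0$ by the mean value theorem (the boundaries are $\varepsilon/2$-translates), $O(\varepsilon^{1-2\theta})$ on $\partial D_i^0$ outside $\mathcal C_{\varepsilon^\theta}$ using the gradient bound $C/(\varepsilon+|x'|^2)$ from Proposition \ref{propu113D}, and $O(\varepsilon^{2\theta}+\varepsilon^{1-2\theta})$ on the lateral boundary $|x'|=\varepsilon^\theta$ via a vertical mean-value argument through the auxiliary functions ${\bf v}_1^\alpha$, ${\bf v}_1^{*\alpha}$. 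Balancing gives $\theta=1/4$; the maximum modulus theorem for the Stokes system then propagates the $O(\rho_3^\alpha(\varepsilon))$ bound into $V\setminus\mathcal C_{\varepsilon^{1/4}}$, and standard boundary estimates near $\partial D$ (where the difference vanishes) upgrade it to the $C^1$ and pressure bounds in \eqref{boundaryup}. The distinct exponents $\rho_3^\alpha(\varepsilon)$ come from this optimization with the different gradient growth rates for each $\alpha$, not from an $L^2$ computation of forcing terms. If you want to salvage your approach you would have to excise the neck in a similar way and replace the energy step by pointwise comparison there; as written, the argument does not go through.
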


\begin{proof}
	Notice that $({\bf u}_{1}^{\alpha}-{\bf u}_{1}^{*\alpha},p_1^{\alpha}-p_1^{*\alpha})$ satisfies
	\begin{equation*}
	\begin{cases}
	\nabla\cdot\sigma[{\bf u}_{1}^{\alpha}-{\bf u}_{1}^{*\alpha},p_{1}^{\alpha}-p_{1}^{*\alpha}]=0,&\mathrm{in}~V:=D\setminus\overline{D_{1}\cup D_{2}\cup D_{1}^{0}\cup D_{2}^{0}},\\
	\nabla\cdot ({\bf u}_{1}^{\alpha}-{\bf u}_{1}^{*\alpha})=0,&\mathrm{in}~V,\\
	{\bf u}_{1}^{\alpha}-{\bf u}_{1}^{*\alpha}={\boldsymbol\psi}_{\alpha}-{\bf u}_{1}^{*\alpha},&\mathrm{on}~\partial{D}_{1}\setminus D_{1}^{0},\\
	{\bf u}_{1}^{\alpha}-{\bf u}_{1}^{*\alpha}=-{\bf u}_{1}^{*\alpha},&\mathrm{on}~\partial{D}_{2}\setminus D_{2}^{0},\\
	{\bf u}_{1}^{\alpha}-{\bf u}_{1}^{*\alpha}={\bf u}_{1}^{\alpha}-{\boldsymbol\psi}_{\alpha},&\mathrm{on}~\partial{D}_{1}^{0}\setminus(D_{1}\cup\{0\}),\\
	{\bf u}_{1}^{\alpha}-{\bf u}_{1}^{*\alpha}={\bf u}_{1}^{\alpha},&\mathrm{on}~\partial{D}_{2}^{0}\setminus D_{2},\\
	{\bf u}_{1}^{\alpha}-{\bf u}_{1}^{*\alpha}=0,&\mathrm{on}~\partial{D}.
	\end{cases}
	\end{equation*}
We will prove the case that $\alpha=1$  for instance, since the other cases are similar. 

If $x\in\partial{D}_{1}\setminus D_{1}^{0}\subset\Omega^{0}\setminus\Omega_{R}^{0}$, then we notice that the point $(x',x_{3}-\varepsilon/2)\in\Omega^{0}\setminus\Omega_{R}^{0}$. By using mean value theorem, 
\begin{align}\label{partial D11}
|({\bf u}_{1}^{1}-{\bf u}_{1}^{*1})(x',x_{3})|&=|(\boldsymbol\psi_{1}-{\bf u}_{1}^{*1})(x',x_{3})|=|{\bf u}_{1}^{*1}(x',x_{3}-\varepsilon/2)-{\bf u}_{1}^{*1}(x',x_{3})|\leq C\varepsilon.
\end{align}
Let
$$\mathcal{C}_{r}:=\left\{x\in\mathbb R^{3}\big| |x'|<r,~-\frac{\varepsilon}{2}-r^2\leq x_{3}\leq\frac{\varepsilon}{2}+r^2\right\},\quad r<R.$$
If $x\in\partial{D}_{1}^{0}\setminus(D_{1}\cup\mathcal{C}_{\varepsilon^{\theta}})$, where $0<\theta<1$ is some constant to be determined later, then by mean value theorem  and Proposition \ref{propu113D}, we have
	\begin{align}\label{partial D11*}
	|({\bf u}_{1}^{1}-{\bf u}_{1}^{*1})(x',x_{3})|&=|({\bf u}_{1}^{1}-\boldsymbol\psi_{1})(x',x_{3})|=|{\bf u}_{1}^{1}(x',x_{3})-{\bf u}_{1}^{1}(x',x_{3}+\varepsilon/2)|\nonumber\\
	&\leq\frac{C\varepsilon}{\varepsilon+|x'|^{2}}\leq C\varepsilon^{1-2\theta}.
	\end{align}
Similarly, for $x\in\partial{D}_{2}\setminus D_{2}^{0}$, 
\begin{align}\label{partial D21}
|({\bf u}_{1}^{1}-{\bf u}_{1}^{*1})(x',x_{3})|\leq C\varepsilon,
\end{align}
and for $x\in\partial{D}_{2}^{0}\setminus(D_{2}\cup\mathcal{C}_{\varepsilon^{\theta}})$, we have
\begin{align}\label{partial D21*}
|({\bf u}_{1}^{1}-{\bf u}_{1}^{*1})(x',x_{3})|\leq\,C\varepsilon^{1-2\theta}.
\end{align}

Define another auxillary function
\begin{align*}
{\bf v}_{1}^{*1}=\boldsymbol\psi_{1}\Big(k^*(x)+\frac{1}{2}\Big)+\boldsymbol\psi_{3}x_{1}\Big((k^*(x))^2-\frac{1}{4}\Big),\quad\hbox{in}\ \Omega_{2R}^*,
	\end{align*}
and $\|{\bf v}_{1}^{*1}\|_{C^{2}(\Omega^0\setminus\Omega_{R}^{0})}\leq\,C$. Recalling \eqref{v1alpha}, we have 
	\begin{equation*}
	\left|\partial_{x_{3}}({\bf v}_{1}^{1}-{\bf v}_{1}^{*1})\right|\leq \frac{C\varepsilon}{|x'|^{2}(\varepsilon+|x'|^{2})}.
	\end{equation*}
	For $x\in\Omega_{R}^{0}$ with $|x'|=\varepsilon^{\theta}$, it follows from Proposition \ref{propu113D} that
	\begin{align}\label{partial x2 u11}
	\left|\partial_{x_{3}}({\bf u}_{1}^{1}-{\bf u}_{1}^{*1})(x',x_{3})\right|
	&=\left|\partial_{x_{3}}({\bf u}_{1}^{1}-{\bf v}_{1}^{1})+\partial_{x_{3}}({\bf v}_{1}^{1}-{\bf v}_{1}^{*1})
	+\partial_{x_{3}}({\bf v}_{1}^{*1}-{\bf u}_{1}^{*1})\right|(x',x_{3})\nonumber\\
	&\leq  C\left(1+\frac{1}{\varepsilon^{4\theta-1}}\right).
	\end{align}
	Thus, for $x\in\Omega_{R}^{0}$ with $|x'|=\varepsilon^{\theta}$, by using the triangle inequality, \eqref{partial D21*}, the mean value theorem, and \eqref{partial x2 u11}, 
\begin{align}\label{es v11*2D}
\left|({\bf u}_{1}^{1}-{\bf u}_{1}^{*1})(x',x_{3})\right|&\leq\left|({\bf u}_{1}^{1}-{\bf u}_{1}^{*1})(x',x_{3})-({\bf u}_{1}^{1}-{\bf u}_{1}^{*1})(x',-h_{2}(x')\right|
+C\varepsilon^{1-2\theta}\nonumber\\
&\leq\left|\partial_{x_{3}}({\bf u}_{1}^{1}-{\bf u}_{1}^{*1})\right|\cdot(h_{1}(x')+h_{2}(x'))\Big|_{|x'|=\varepsilon^{\theta}}+C\varepsilon^{1-2\theta}\nonumber\\
&\leq C\left(1+\frac{1}{\varepsilon^{4\theta-1}}\right)\varepsilon^{2\theta}+C\varepsilon^{1-2\theta}=C\left(\varepsilon^{2\theta}+\varepsilon^{1-2\theta}\right).
\end{align}	
By taking $2\theta=1-2\theta$, we get $\theta=\frac{1}{4}$. Substituting it into \eqref{partial D11*}, \eqref{partial D21*} and \eqref{es v11*2D}, and using \eqref{partial D11}, \eqref{partial D21}, and ${\bf u}_{1}^{1}-{\bf u}_{1}^{*1}=0$ on $\partial D$, we obtain
	\begin{align*}
	|{\bf u}_{1}^{1}-{\bf u}_{1}^{*1}|\leq C\varepsilon^{1/2},\quad\mbox{on}~\partial{(V\setminus\mathcal{C}_{\varepsilon^{1/4}})}.
	\end{align*}
	Applying the maximum modulus for Stokes systems in $V\setminus\mathcal{C}_{\varepsilon^{1/4}}$ (see, for example, the book of Ladyzhenskaya \cite{Lady}), we obtain
	\begin{align*}
	|({\bf u}_{1}^{1}-{\bf u}_{1}^{*1})(x)|\leq C\varepsilon^{1/2},\quad\mbox{in}~ V\setminus \mathcal{C}_{\varepsilon^{1/4}}.
	\end{align*}
In view of the  standard boundary  estimates for Stokes systems with ${\bf u}_{1}^{\alpha}-{\bf u}_{1}^{*\alpha}=0$ on $\partial{D}$ (see, for example \cite{Kratz,Mazya}), we obtain \eqref{boundaryup} with $\alpha=1$. The proof is finished.
\end{proof}

\subsection{Convergence of $\frac{C_{1}^{\alpha}+C_{2}^{\alpha}}{2}-C_{*}^{\alpha}$}
Recalling \eqref{systemC}  and using a rearrangement, from the first line of \eqref{systemC}, we have
\begin{equation}\label{C1C2_1}
\sum_{\alpha=1}^{6}(C_{1}^{\alpha}+C_{2}^{\alpha})(a_{11}^{\alpha\beta}+a_{21}^{\alpha\beta})+\sum_{\alpha=1}^{6}(C_{1}^{\alpha}-C_{2}^{\alpha})(a_{11}^{\alpha\beta}-a_{21}^{\alpha\beta})-2b_{1}^{\beta}=0.
\end{equation}
Similarly, for the second equation of \eqref{systemC}, we have 
\begin{equation}\label{C1C2_22}
\sum_{\alpha=1}^{6}(C_{1}^{\alpha}+C_{2}^{\alpha})(a_{12}^{\alpha\beta}+a_{22}^{\alpha\beta})+\sum_{\alpha=1}^{6}(C_{1}^{\alpha}-C_{2}^{\alpha})(a_{12}^{\alpha\beta}-a_{22}^{\alpha\beta})-2b_{2}^{\beta}=0.
\end{equation}
Then adding \eqref{C1C2_1} and \eqref{C1C2_22} together, and dividing by two, we obtain
\begin{equation}\label{C1+C2_1}
\sum_{\alpha=1}^{6}\frac{C_{1}^{\alpha}+C_{2}^{\alpha}}{2}a^{\alpha\beta}
+\sum_{\alpha=1}^{6}\frac{C_{1}^{\alpha}-C_{2}^{\alpha}}{2}(a_{11}^{\alpha\beta}-a_{22}^{\alpha\beta}+a_{12}^{\alpha\beta}-a_{21}^{\alpha\beta})-(b_{1}^{\beta}+b_{2}^{\beta})=0,
\end{equation}
where 
\begin{equation*}
a^{\alpha\beta}=\sum_{i,j=1}^{2}a_{ij}^{\alpha\beta}=-\left(\int_{\partial{D}_{1}}{\boldsymbol\psi}_\beta\cdot\sigma[{\bf u}^\alpha,p^{\alpha}]\nu+\int_{\partial{D}_{2}}{\boldsymbol\psi}_\beta\cdot\sigma[{\bf u}^\alpha,p^{\alpha}]\nu\right),
\end{equation*}
and ${\bf u}^{\alpha}:={\bf u}_{1}^{\alpha}+{\bf u}_{2}^{\alpha}$, $p^{\alpha}:=p_{1}^{\alpha}+p_{2}^{\alpha}$ satisfy
\eqref{def valpha}.

From the third line of \eqref{maineqn touch}, it follows that
\begin{align}\label{equ_C*alpha}
&\sum_{\alpha=1}^{6}C_{*}^{\alpha}a_{*}^{\alpha\beta}-(b_{1}^{*\beta}+b_{2}^{*\beta})=0,\quad\beta=1,\dots,6,
\end{align}
where
\begin{align}\label{b*jbeta}
a_{*}^{\alpha\beta}=-\int_{\partial{D}_{1}^{0}\cup\partial{D}_{2}^{0}}{\boldsymbol\psi}_\beta\cdot\sigma[{\bf u}^{*\alpha},p^{*\alpha}]\nu,\quad b_{j}^{*\beta}=\int_{\partial{D}_{j}^{0}}{\boldsymbol\psi}_\beta\cdot\sigma[{\bf u}_0^*,p_0^{*}]\nu,\quad j=1,2.
\end{align}

\begin{lemma}\label{es b1 b1* beta=1}
Let $b_{1}^{\beta}$ and $b_{1}^{*\beta}$ be defined in \eqref{aijbj} and \eqref{b*jbeta}, respectively. Then we have 	
\begin{equation}\label{est v0-v0*}
\left|b_{1}^{\beta}[{\boldsymbol\varphi}]-b_{1}^{*\beta}[{\boldsymbol\varphi}]\right|\leq C\,\rho_{3}^{\beta}(\varepsilon)\|{\boldsymbol\varphi}\|_{L^{\infty}(\partial D)},
\end{equation}
where $b_{1}^{\beta}[{\boldsymbol\varphi}]$ is defined by \eqref{aijbj}, and $b_{1}^{*\beta}[{\boldsymbol\varphi}]$  by \eqref{b*jbeta}.
\end{lemma}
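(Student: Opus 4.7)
The plan is to recast both $b_{1}^{\beta}[{\boldsymbol\varphi}]$ and $b_{1}^{*\beta}[{\boldsymbol\varphi}]$ as surface integrals over $\partial D$ via a Green/reciprocity identity for the Stokes system. Once this is done, their difference depends only on the traces of $({\bf u}_{1}^{\beta}-{\bf u}_{1}^{*\beta}, p_{1}^{\beta}-p_{1}^{*\beta})$ on $\partial D$, where the boundary convergence rate $\rho_{3}^{\beta}(\varepsilon)$ already established in Lemma \ref{lem difference v11} will finish the estimate.

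Concretely, I would start from the variational identity \eqref{defaij}, namely
\begin{equation*}
b_{1}^{\beta}[{\boldsymbol\varphi}]=-\int_{\Omega}\big(2\mu\,e({\bf u}_{0}),\, e({\bf u}_{1}^{\beta})\big)\,\mathrm{d}x,
\end{equation*}
and reverse the direction of integration by parts, but this time testing $\sigma[{\bf u}_{1}^{\beta},p_{1}^{\beta}]$ against ${\bf u}_{0}$. Using symmetry of the stress tensor, $\nabla\cdot\sigma[{\bf u}_{1}^{\beta},p_{1}^{\beta}]=0$ and $\nabla\cdot {\bf u}_{0}=0$ in $\Omega$, together with ${\bf u}_{0}=0$ on $\partial D_{1}\cup\partial D_{2}$ and ${\bf u}_{0}={\boldsymbol\varphi}$ on $\partial D$, this yields the reciprocity formula
\begin{equation*}
b_{1}^{\beta}[{\boldsymbol\varphi}]=-\int_{\partial D} {\boldsymbol\varphi}\cdot\sigma[{\bf u}_{1}^{\beta},p_{1}^{\beta}]\nu\,\mathrm{d}S.
\end{equation*}
Repeating the same computation in $\Omega^{0}$ for the limiting problems \eqref{def valpha*} and \eqref{equ_v3*} produces the analogous identity
\begin{equation*}
b_{1}^{*\beta}[{\boldsymbol\varphi}]=-\int_{\partial D} {\boldsymbol\varphi}\cdot\sigma[{\bf u}_{1}^{*\beta},p_{1}^{*\beta}]\nu\,\mathrm{d}S,
\end{equation*}
which lives on the \emph{same} surface $\partial D$, since $\partial D$ is untouched by the perturbation $\varepsilon\to 0$.

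Subtracting these two identities and invoking Lemma \ref{lem difference v11}, which furnishes
\begin{equation*}
|\nabla({\bf u}_{1}^{\beta}-{\bf u}_{1}^{*\beta})(x)|+|(p_{1}^{\beta}-p_{1}^{*\beta})(x)|\leq C\rho_{3}^{\beta}(\varepsilon),\qquad x\in\partial D,
\end{equation*}
I would bound
\begin{equation*}
|b_{1}^{\beta}[{\boldsymbol\varphi}]-b_{1}^{*\beta}[{\boldsymbol\varphi}]|\leq \|{\boldsymbol\varphi}\|_{L^{\infty}(\partial D)}\int_{\partial D}\big|\sigma[{\bf u}_{1}^{\beta}-{\bf u}_{1}^{*\beta},\,p_{1}^{\beta}-p_{1}^{*\beta}]\big|\,\mathrm{d}S\leq C\rho_{3}^{\beta}(\varepsilon)\|{\boldsymbol\varphi}\|_{L^{\infty}(\partial D)},
\end{equation*}
which is exactly \eqref{est v0-v0*}.

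I do not anticipate a serious obstacle here: the reciprocity identity reduces the lemma to a direct application of the boundary estimate in Lemma \ref{lem difference v11}, and $|\partial D|$ is a universal constant. The only routine bookkeeping is in verifying that the contributions from $\partial D_{1}\cup\partial D_{2}$ (respectively $\partial D_{1}^{0}\cup\partial D_{2}^{0}$) vanish in the two integration-by-parts steps and that the outward unit normal on $\partial D$ is oriented consistently between the two formulas so the sign cancels correctly in the difference.
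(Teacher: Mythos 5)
Your proposal is correct and matches the paper's own argument: the paper likewise uses integration by parts to rewrite $b_{1}^{\beta}[{\boldsymbol\varphi}]=-\int_{\partial D}{\boldsymbol\varphi}\cdot\sigma[{\bf u}_1^{\beta},p_1^{\beta}]\nu$ and $b_{1}^{*\beta}[{\boldsymbol\varphi}]=-\int_{\partial D}{\boldsymbol\varphi}\cdot\sigma[{\bf u}_1^{*\beta},p_1^{*\beta}]\nu$, subtracts, and concludes by the boundary estimate of Lemma \ref{lem difference v11}. No gaps.
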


\begin{proof}
Applying  the integration by parts,  \eqref{defaij}, \eqref{equ_v1}, and \eqref{equ_v3}, we have 
\begin{align*}
b_{1}^{\beta}[{\boldsymbol\varphi}]=\int_{\partial D_{1}}{\boldsymbol\psi}_\beta\cdot\sigma[{\bf u}_0,p_0]\nu&=-\int_{\partial D}{\boldsymbol\varphi}\cdot\sigma[{\bf u}_1^\beta,p_1^{\beta}]\nu.
\end{align*}
Similarly, by means of \eqref{def valpha*}, \eqref{equ_v3*}, and \eqref{b*jbeta}, we obtain
\begin{align*}
b_{1}^{*\beta}[{\boldsymbol\varphi}]=-\int_{\partial D}{\boldsymbol\varphi}\cdot\sigma[{\bf u}_1^{*\beta},p_1^{*\beta}]\nu,
\end{align*}
and thus, 
\begin{equation*}
b_{1}^{\beta}[{\boldsymbol\varphi}]-b_{1}^{*\beta}[{\boldsymbol\varphi}]=-\int_{\partial D}{\boldsymbol\varphi}\cdot\sigma[{\bf u}_1^\beta-{\bf u}_1^{*\beta},p_1^{\beta}-p_1^{*\beta}]\nu.
\end{equation*}
Together with Lemma \ref{lem difference v11}, we get \eqref{est v0-v0*}.
\end{proof}

\begin{lemma}\label{lem valpha1}
Let ${\bf u}^{\alpha}$ and ${\bf u}^{*\alpha}$ be defined by \eqref{def valpha} and \eqref{def valpha*}, respectively, $\alpha=1,\dots,6$. Then 
\begin{align}\label{est v11 v11*}
\left|\int_{\partial D_{1}}{\boldsymbol\psi}_\beta\cdot\sigma[{\bf u}^{\alpha},p^{\alpha}]\nu-\int_{\partial D_{1}^{0}}{\boldsymbol\psi}_\beta\cdot\sigma[{\bf u}^{*\alpha},p^{*\alpha}]\nu\right|\leq
C\,\rho_{3}^{\beta}(\varepsilon)\|{\boldsymbol\psi}_{\alpha}\|_{L^{\infty}(\partial D)}.
\end{align}
\end{lemma}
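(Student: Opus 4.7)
The plan is to mirror the proof of Lemma \ref{es b1 b1* beta=1} via a two-step integration by parts that transports each of the boundary integrals all the way out to $\partial D$, where the convergence rate from Lemma \ref{lem difference v11} applies.

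First, I would apply Green's identity for Stokes systems in $\Omega$ with $({\bf u}^\alpha,p^\alpha)$ and the test field ${\bf u}_1^\beta$ (and vice versa), and equate the two representations of the common energy $\int_\Omega 2\mu\,e({\bf u}^\alpha):e({\bf u}_1^\beta)\,dx$. Using the boundary conditions (${\bf u}_1^\beta={\boldsymbol\psi}_\beta$ on $\partial D_1$ and $0$ on $\partial D_2\cup\partial D$, while ${\bf u}^\alpha={\boldsymbol\psi}_\alpha$ on $\partial D_1\cup\partial D_2$ and $0$ on $\partial D$), this yields
\begin{equation*}
\int_{\partial D_1}{\boldsymbol\psi}_\beta\cdot\sigma[{\bf u}^\alpha,p^\alpha]\nu
=\int_{\partial D_1\cup\partial D_2}{\boldsymbol\psi}_\alpha\cdot\sigma[{\bf u}_1^\beta,p_1^\beta]\nu.
\end{equation*}
Second, I would exploit the rigid-motion property of ${\boldsymbol\psi}_\alpha\in\Psi$, namely $e({\boldsymbol\psi}_\alpha)\equiv0$ and $\nabla\cdot{\boldsymbol\psi}_\alpha=0$ in $\mathbb R^3$. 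Testing the Stokes identity for $({\bf u}_1^\beta,p_1^\beta)$ against ${\boldsymbol\psi}_\alpha$ on $\Omega$ produces $\int_\Omega \big(2\mu\,e({\bf u}_1^\beta):e({\boldsymbol\psi}_\alpha)-p_1^\beta\nabla\cdot{\boldsymbol\psi}_\alpha\big)dx=0$, which rearranges to
\begin{equation*}
\int_{\partial D_1\cup\partial D_2}{\boldsymbol\psi}_\alpha\cdot\sigma[{\bf u}_1^\beta,p_1^\beta]\nu
=\int_{\partial D}{\boldsymbol\psi}_\alpha\cdot\sigma[{\bf u}_1^\beta,p_1^\beta]\nu.
\end{equation*}

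Running the identical reduction on the starred problem posed on $\Omega^0$, with $({\bf u}^{*\alpha},p^{*\alpha})$ and test field ${\bf u}_1^{*\beta}$, gives the analogous identity with $\partial D$-integrals involving $({\bf u}_1^{*\beta},p_1^{*\beta})$. Subtracting,
\begin{equation*}
\int_{\partial D_1}{\boldsymbol\psi}_\beta\cdot\sigma[{\bf u}^\alpha,p^\alpha]\nu
-\int_{\partial D_1^0}{\boldsymbol\psi}_\beta\cdot\sigma[{\bf u}^{*\alpha},p^{*\alpha}]\nu
=\int_{\partial D}{\boldsymbol\psi}_\alpha\cdot\sigma\big[{\bf u}_1^\beta-{\bf u}_1^{*\beta},\,p_1^\beta-p_1^{*\beta}\big]\nu.
\end{equation*}
Note that this identity is intrinsic to the pair: because $\int_{\partial D}{\boldsymbol\psi}_\alpha\cdot\nu=\int_D\nabla\cdot{\boldsymbol\psi}_\alpha\,dx=0$ by the divergence theorem, it is insensitive to constant shifts in $p_1^\beta-p_1^{*\beta}$, so the normalization implicit in Lemma \ref{lem difference v11} is compatible. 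Applying that lemma on $\partial D$ to bound $|\sigma[{\bf u}_1^\beta-{\bf u}_1^{*\beta},p_1^\beta-p_1^{*\beta}]\nu|\leq C\rho_3^\beta(\varepsilon)$, together with $|{\boldsymbol\psi}_\alpha|\leq\|{\boldsymbol\psi}_\alpha\|_{L^\infty(\partial D)}$, gives the desired estimate \eqref{est v11 v11*}.

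The only delicate point in executing this plan is the legitimacy of Green's identity in $\Omega^0$, which contains the singular contact point at the origin where $\partial D_1^0$ meets $\partial D_2^0$. I would handle this by truncating the integration domain to $\Omega^0\setminus\mathcal{C}_r$ for small $r>0$, verifying via the interior Stokes regularity of $({\bf u}^{*\alpha},p^{*\alpha})$ away from the origin that the boundary contributions from $\partial\mathcal{C}_r$ vanish as $r\to 0^+$ (the key input being that ${\bf u}_1^{*\beta}$ and ${\bf u}^{*\alpha}$ equal the smooth extension ${\boldsymbol\psi}_\beta$, ${\boldsymbol\psi}_\alpha$ on $\partial D_1^0\cup\partial D_2^0$, so the jump across $\partial\mathcal{C}_r$ is controllable by the same cut-and-pass-to-the-limit argument used in the proof of Lemma \ref{lem difference v11}). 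This is the same technical step that underlies the construction of ${\bf u}^{*\alpha}$ itself, and no new difficulty appears beyond it.
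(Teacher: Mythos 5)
Your proposal is correct and is essentially the paper's own argument: both reduce each boundary integral via Stokes reciprocity (your two steps—reciprocity plus the vanishing of the total traction against a rigid motion—are equivalent to the paper's one-step device of replacing ${\bf u}^{\alpha}$ by ${\bf u}^{\alpha}-{\boldsymbol\psi}_{\alpha}$, which leaves $\sigma$ unchanged since $e({\boldsymbol\psi}_{\alpha})=0$) to obtain $\int_{\partial D_{1}}{\boldsymbol\psi}_\beta\cdot\sigma[{\bf u}^{\alpha},p^{\alpha}]\nu=\int_{\partial D}{\boldsymbol\psi}_\alpha\cdot\sigma[{\bf u}_1^{\beta},p_1^{\beta}]\nu$ and its starred analogue, and then conclude with the boundary estimate of Lemma \ref{lem difference v11}. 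Your extra remarks on the pressure normalization and on justifying the identity in $\Omega^{0}$ near the contact point are sound refinements of the same proof.
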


\begin{proof}
For $\alpha=1,\dots,6$, it follows from \eqref{def valpha} that
\begin{equation*}
\begin{cases}
\nabla\cdot\sigma[{\bf u}^{\alpha}-{\boldsymbol\psi}_{\alpha},p^{\alpha}]=0,\quad\nabla\cdot ({\bf u}^{\alpha}-{\boldsymbol\psi}_{\alpha})=0,&\mbox{in}~\Omega,\\
{\bf u}^{\alpha}-{\boldsymbol\psi}_{\alpha}=0,&\mbox{on}~\partial{D}_{1}\cup\partial{D}_{2},\\
{\bf u}^{\alpha}-{\boldsymbol\psi}_{\alpha}=-{\boldsymbol\psi}_{\alpha},&\mbox{on}~\partial D.
\end{cases}
\end{equation*}
Using the integration by parts, for $\alpha=1,\dots, 6$,
\begin{align*}
\int_{\partial D_{1}}{\boldsymbol\psi}_\beta\cdot\sigma[{\bf u}^{\alpha},p^{\alpha}]\nu=\int_{\partial D_{1}}{\boldsymbol\psi}_\beta\cdot\sigma[{\bf u}^{\alpha}-{\boldsymbol\psi}_{\alpha},p^{\alpha}]\nu=\int_{\partial D}{\boldsymbol\psi}_\alpha\cdot\sigma[{\bf u}_1^{\beta},p_1^{\beta}]\nu.
\end{align*}
Similarly,
\begin{align*}
\int_{\partial D_{1}^{0}}{\boldsymbol\psi}_\beta\cdot\sigma[{\bf u}^{*\alpha},p^{*\alpha}]\nu=\int_{\partial D}{\boldsymbol\psi}_\alpha\cdot\sigma[{\bf u}_1^{*\beta},p_1^{*\beta}]\nu.
\end{align*}
Applying Lemma \ref{lem difference v11}, we obtain  \eqref{est v11 v11*}. 
\end{proof}

\begin{prop}\label{propC2D}
Assume that $D_{1}\cup D_{2}$ and $D$ satisfies $({\rm S_{H}})$, and ${\boldsymbol\varphi}$ satisfies the symmetric condition $({\rm S_{{\boldsymbol\varphi}}})$. Let $C_{1}^{\alpha}, C_{2}^{\alpha}$, and $C_{*}^{\alpha}$ be defined in \eqref{udecom} and \eqref{maineqn touch}, respectively. Then
\begin{equation*}
C_1^\alpha+C_2^\alpha=C_*^\alpha\equiv0,~\alpha=1,2,3,\quad\mbox{and}~\left|\frac{C_1^\alpha+C_2^\alpha}{2}-C_*^\alpha\right|\leq\frac{C}{|\ln\varepsilon|},~\alpha=4,5,6.
\end{equation*}	
\end{prop}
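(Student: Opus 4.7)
The plan has two ingredients: first, a reflection-symmetry reduction pins down half of the coefficients exactly; second, a subtraction of the linear systems \eqref{C1+C2_1} and \eqref{equ_C*alpha}, combined with the convergence lemmas already established and the bounds of Proposition \ref{lemCialpha3D}, yields the logarithmic rate.

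\emph{Step 1 (Symmetry).} Hypothesis $({\rm S_{H}})$ makes $x\mapsto-x$ an involution of $D$ that interchanges $D_1$ and $D_2$. Set $\tilde{\bf u}(x):=-{\bf u}(-x)$ and $\tilde p(x):=p(-x)$. A direct chain-rule check shows $(\tilde{\bf u},\tilde p)$ satisfies the Stokes equations in $\Omega$ (using that $\Delta$ is even and $\nabla p$ is odd), the incompressibility condition, the $e(\tilde{\bf u})=0$ condition on each $D_i$ (rigid motions pull back to rigid motions under this transformation), and the moment conditions (a change of variable on the boundary integral). On $\partial D$ one has $\tilde{\bf u}(x)=-{\boldsymbol\varphi}(-x)={\boldsymbol\varphi}(x)$ by $({\rm S_{{\boldsymbol\varphi}}})$, so by uniqueness $\tilde{\bf u}={\bf u}$, $\tilde p=p$. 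Inserting the rigid-motion decomposition and using ${\boldsymbol\psi}_\alpha(-x)={\boldsymbol\psi}_\alpha$ for $\alpha\le3$ together with ${\boldsymbol\psi}_\alpha(-x)=-{\boldsymbol\psi}_\alpha(x)$ for $\alpha\ge4$ gives
$$C_1^\alpha+C_2^\alpha=0\quad(\alpha=1,2,3),\qquad C_1^\alpha=C_2^\alpha\quad(\alpha=4,5,6).$$
Applying the same reflection to the limit problem \eqref{maineqn touch}, where ${\bf u}^*$ is a \emph{single} rigid motion on $\partial D_1^0\cup\partial D_2^0$, forces $C_*^\alpha=0$ for $\alpha=1,2,3$. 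This establishes the first assertion.

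\emph{Step 2 (Subtraction).} With these identities in hand, equation \eqref{C1+C2_1} restricted to $\beta=4,5,6$ collapses to
$$\sum_{\alpha=4}^{6}\tfrac{C_1^\alpha+C_2^\alpha}{2}\,a^{\alpha\beta}+\sum_{\alpha=1}^{3}C_1^\alpha\bigl(a_{11}^{\alpha\beta}-a_{22}^{\alpha\beta}+a_{12}^{\alpha\beta}-a_{21}^{\alpha\beta}\bigr)=b_1^\beta+b_2^\beta,$$
while \eqref{equ_C*alpha} reduces to $\sum_{\alpha=4}^{6}C_*^\alpha a_*^{\alpha\beta}=b_1^{*\beta}+b_2^{*\beta}$. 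Subtracting yields
$$\sum_{\alpha=4}^{6}\Bigl(\tfrac{C_1^\alpha+C_2^\alpha}{2}-C_*^\alpha\Bigr)a^{\alpha\beta}=\mathcal{R}^\beta,$$
where $\mathcal{R}^\beta$ assembles the $b_j^\beta-b_j^{*\beta}$ terms, the $C_*^\alpha(a^{\alpha\beta}-a_*^{\alpha\beta})$ terms, and the $C_1^\alpha$ cross terms for $\alpha=1,2,3$. Lemma \ref{es b1 b1* beta=1} (and its $j=2$ counterpart, which follows by symmetry) gives $|b_j^\beta-b_j^{*\beta}|\le C\rho_3^\beta(\varepsilon)$; Lemma \ref{lem valpha1} (summed over both boundaries) gives $|a^{\alpha\beta}-a_*^{\alpha\beta}|\le C\rho_3^\beta(\varepsilon)$; the coefficients $a_{ij}^{\alpha\beta}-a_{i'j'}^{\alpha\beta}$ are $O(1)$ by Lemma \ref{lema114563D}; and Step~1 together with \eqref{estC1121} furnishes $|C_1^\alpha|=\tfrac{1}{2}|C_1^\alpha-C_2^\alpha|\le C/|\ln\varepsilon|$ for $\alpha=1,2$ and $|C_1^3|\le C\varepsilon$. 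The boundedness of $C_*^\alpha$ is standard. Collecting, $|\mathcal{R}^\beta|\le C/|\ln\varepsilon|$, which is the dominant of all the contributions.

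\emph{Step 3 (Inversion and the main obstacle).} The remaining task is to invert the $3\times3$ matrix $(a^{\alpha\beta})_{\alpha,\beta=4,5,6}$ uniformly in $\varepsilon$. Proposition \ref{prop1.7} gives the uniform upper bound; for the lower bound I would argue by continuity: Lemma \ref{lem valpha1} implies $a^{\alpha\beta}\to a_*^{\alpha\beta}$, and the limit matrix $(a_*^{\alpha\beta})_{\alpha,\beta=4,5,6}$ is a Dirichlet Gram matrix that vanishes only on rigid modes, hence is positive definite once restricted to the three rotation generators ${\bf u}^{*4},{\bf u}^{*5},{\bf u}^{*6}$ (which are non-trivial in $\Omega^0$). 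Therefore, for $\varepsilon$ small, $(a^{\alpha\beta})_{\alpha,\beta=4,5,6}$ is uniformly invertible, and multiplying $\mathcal{R}^\beta$ by its bounded inverse finishes the proof. The chief technical obstacle is this uniform positive-definiteness of the restricted Gram matrix; everything else is an orchestration of the estimates already produced in Section \ref{sec_estimate3D} and in the preceding lemmas of this section.
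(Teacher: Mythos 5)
Your proposal is correct and reaches the same two conclusions, but it routes around the paper's argument at two points, and the comparison is worth recording. For the exact identities you reflect the \emph{full} solution, $\tilde{\bf u}(x)=-{\bf u}(-x)$, $\tilde p(x)=p(-x)$, and invoke uniqueness, reading off $C_1^\alpha=-C_2^\alpha$ ($\alpha\le3$), $C_1^\alpha=C_2^\alpha$ ($\alpha\ge4$) and $C_*^\alpha=0$ ($\alpha\le3$) directly from the behaviour of ${\boldsymbol\psi}_\alpha$ under $x\mapsto-x$; the paper instead reflects the building blocks ${\bf u}_i^\alpha,{\bf u}_0$, derives the long list of coefficient symmetries \eqref{origin_sym0}--\eqref{25_sym} and \eqref{|symtilb}, and extracts \eqref{C1456} from the linear system by Cramer's rule. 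Your route is shorter for this proposition, but the paper's extra work buys the diagonality \eqref{syma11}--\eqref{symaalphabeta12}, which makes the final inversion trivial (divide by $a^{\alpha\alpha}$, whose positive lower bounds are immediate energy bounds) and is reused in the proof of Theorem \ref{mainthm2}. Your Step 3 replaces diagonality by a continuity argument: $a^{\alpha\beta}\to a_*^{\alpha\beta}$ (Lemma \ref{lem valpha1} plus its $\partial D_2$ counterpart) and uniform invertibility of the limiting $3\times3$ block because it is the Gram matrix of $e({\bf u}^{*4}),e({\bf u}^{*5}),e({\bf u}^{*6})$ and a rigid motion vanishing on $\partial D$ is zero. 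That is a valid alternative, provided you justify that the Gram representation $a_*^{\alpha\beta}=\int_{\Omega^0}\bigl(2\mu e({\bf u}^{*\alpha}),e({\bf u}^{*\beta})\bigr)\mathrm{d}x$ is legitimate on the cusp domain $\Omega^0$ (finite energy of ${\bf u}^{*\alpha}$ near the touching point and integration by parts there) — you correctly flag this as the main technical obstacle; for $\alpha\ge4$ the boundary data coincide on the two inclusions, so this is the analogue of Proposition \ref{prop1.7} in the limit and does go through. Two small caveats: your appeal to Lemma \ref{lema114563D} for "the coefficients $a_{ij}^{\alpha\beta}-a_{i'j'}^{\alpha\beta}$" is a slight overstatement, since that lemma only bounds the $a_{11}$ entries (and the combinations $a_{11}^{\alpha\beta}+a_{21}^{\beta\alpha}$); the $a_{22},a_{12},a_{21}$ analogues needed in the cross terms for $(\alpha,\beta)=(1,5),(2,6)$ must be obtained by the mirrored versions of the same cancellation computations (the paper is equally terse here). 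Also note that \eqref{estC1121} is proved independently in Subsection \ref{subsecCi}, so your use of it is not circular. With these points made explicit, your argument is complete and gives the stated $O(1/|\ln\varepsilon|)$ rate, since $\rho_3^\beta(\varepsilon)\ll1/|\ln\varepsilon|$ for $\beta=4,5,6$ and the dominant remainder is indeed the $(C_1^\alpha-C_2^\alpha)$ cross terms with $\alpha=1,2$.
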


\begin{proof} 
Firstly, by using the symmetry of the domain with respect to the origin, 
\begin{align}
a_{11}^{\alpha\beta}&=a_{22}^{\alpha\beta},\quad a_{12}^{\alpha\beta}=a_{21}^{\alpha\beta},\quad~\alpha,\beta=1,2,3,\label{origin_sym0}\\
a_{11}^{\alpha\beta}&=-a_{22}^{\alpha \beta},\quad a_{12}^{\alpha\beta}=-a_{21}^{\alpha\beta},\quad\alpha=1,2,3,~\beta=4,5,6,\label{origin_sym}
\end{align}
and 
\begin{align}\label{origin_sym1}
a_{11}^{\alpha\beta}=a_{22}^{\alpha \beta},\quad a_{12}^{\alpha\beta}=a_{21}^{\alpha\beta},\quad \alpha,\beta=4,5,6.
\end{align}

Secondly, due to the symmetry of the domain with respect to $\{x_{3}=0\}$, set
\begin{align*}
{\bf u}_{2}^{\alpha}(x',x_{3})&=\big(
({\bf u}_{2}^{\alpha})^{(1)}(x',x_{3}),
({\bf u}_{2}^{\alpha})^{(2)}(x',x_{3}),
({\bf u}_{2}^{\alpha})^{(3)}(x',x_{3})\big)^{\mathrm T}\\
&=\big(
({\bf u}_{1}^{\alpha})^{(1)}(x',-x_{3}),
({\bf u}_{1}^{\alpha})^{(2)}(x',-x_{3}),
-({\bf u}_{1}^{\alpha})^{(3)}(x',-x_{3})\big)^{\mathrm T},\quad\alpha=1,2,4,
\end{align*}
and
\begin{align*}
{\bf u}_{2}^{\alpha}(x',x_{3})&=\big(
({\bf u}_{2}^{\alpha})^{1}(x',x_{3}),
({\bf u}_{2}^{\alpha})^{(2)}(x',x_{3}),
({\bf u}_{2}^{\alpha})^{(3)}(x',x_{3})\big)^{\mathrm T}\\
&=\big(
-({\bf u}_{1}^{\alpha})^{(1)}(x',-x_{3}),
-({\bf u}_{2}^{\alpha})^{(2)}(x',-x_{3}),
({\bf u}_{1}^{\alpha})^{(3)}(x',-x_{3})\big)^{\mathrm T},\quad\alpha=3,5,6.
\end{align*}
Since
\begin{align*}
&\left(2\mu e({\bf u}_{2}^{1}), e({\bf u}_{2}^{2})\right)\\
=&\,\mu\Big(2\partial_{x_1}({\bf u}_{2}^{1})^{(1)}\partial_{x_1}({\bf u}_{2}^{2})^{(1)}+2\partial_{x_2}({\bf u}_{2}^{1})^{(2)}\partial_{x_2}({\bf u}_{2}^{2})^{(2)}+2\partial_{x_3}({\bf u}_{2}^{1})^{(3)}\partial_{x_3}({\bf u}_{2}^{2})^{(3)}\\
&\quad+\big(\partial_{x_1}({\bf u}_{2}^{1})^{(2)}+\partial_{x_2}({\bf u}_{2}^{1})^{(1)}\big)\cdot\big(\partial_{x_1}({\bf u}_{2}^{2})^{(2)}+\partial_{x_2}({\bf u}_{2}^{2})^{(1)}\big)\\
&\quad+\big(\partial_{x_1}({\bf u}_{2}^{1})^{(3)}+\partial_{x_3}({\bf u}_{2}^{1})^{(1)}\big)\cdot\big(\partial_{x_1}({\bf u}_{2}^{2})^{(3)}+\partial_{x_3}({\bf u}_{2}^{2})^{(1)}\big)\\
&\quad+\big(\partial_{x_2}({\bf u}_{2}^{1})^{(3)}+\partial_{x_3}({\bf u}_{2}^{1})^{(2)}\big)\cdot\big(\partial_{x_2}({\bf u}_{2}^{2})^{(3)}+\partial_{x_3}({\bf u}_{2}^{2})^{(2)}\big)\Big)=\left(2\mu e({\bf u}_{1}^{1}), e({\bf u}_{1}^{2})\right),
\end{align*}
we  have
\begin{equation}\label{12_sym}
a_{22}^{12}=a_{11}^{12},\quad a_{12}^{12}=a_{21}^{12}.
\end{equation}
Similarly, we obtain
\begin{equation}\label{124_sym}
a_{22}^{\alpha\beta}=a_{11}^{\alpha\beta},\quad a_{12}^{\alpha\beta}=a_{21}^{\alpha\beta},\quad \alpha,\beta=1,2,4,
\end{equation}
\begin{equation}\label{356_sym}
a_{22}^{\alpha\beta}=a_{11}^{\alpha\beta},\quad a_{12}^{\alpha\beta}=a_{21}^{\alpha\beta},\quad\alpha,\beta=3,5,6,
\end{equation}
and
\begin{equation}\label{124356_sym}
a_{22}^{\alpha\beta}=-a_{11}^{\alpha\beta},\quad a_{12}^{\alpha\beta}=-a_{21}^{\alpha\beta},\quad \alpha=1,2,4,~\beta=3,5,6.
\end{equation}

Further, using the symmetry of the domain with respect to $x_{1}$-axis, we set for $\alpha=2,3,4,5$,
\begin{align*}
{\bf u}_{2}^{\alpha}(x',x_{3})&=\big(
({\bf u}_{2}^{\alpha})^{(1)}(x',x_{3}),
({\bf u}_{2}^{\alpha})^{(2)}(x',x_{3}),
({\bf u}_{2}^{\alpha})^{(3)}(x',x_{3})\big)^{\mathrm T}\\
&=\big(
-({\bf u}_{1}^{\alpha})^{(1)}(x_1,-x_2,-x_{3}),
({\bf u}_{1}^{\alpha})^{(2)}(x_1,-x_2,-x_{3}),
({\bf u}_{1}^{\alpha})^{(3)}(x_1,-x_2,-x_{3})\big)^{\mathrm T},
\end{align*}
and for $\alpha=1,6$,
\begin{align*}
{\bf u}_{2}^{\alpha}(x',x_{3})&=\big(
({\bf u}_{2}^{\alpha})^{(1)}(x',x_{3}),
({\bf u}_{2}^{\alpha})^{(2)}(x',x_{3}),
({\bf u}_{2}^{\alpha})^{(3)}(x',x_{3})\big)^{\mathrm T}\\
&=\big(
({\bf u}_{1}^{\alpha})^{(1)}(x_1,-x_2,-x_{3}),
-({\bf u}_{1}^{\alpha})^{(2)}(x_1,-x_2,-x_{3}),
-({\bf u}_{1}^{\alpha})^{(3)}(x_1,-x_2,-x_{3})\big)^{\mathrm T}.
\end{align*}
Then
\begin{equation}\label{2345_sym}
a_{22}^{\alpha\beta}=a_{11}^{\alpha\beta},\quad a_{12}^{\alpha\beta}=a_{21}^{\alpha\beta},\quad \alpha,\beta=2,3,4,5,\quad a_{22}^{16}=a_{11}^{16},\quad a_{12}^{16}=a_{21}^{16},
\end{equation}
and
\begin{equation}\label{16_sym}
a_{22}^{\alpha\beta}=-a_{11}^{\alpha\beta},\quad a_{12}^{\alpha\beta}=-a_{21}^{\alpha\beta},\quad\alpha=1,6,~\beta=2,3,4,5.
\end{equation}

Besides, thanks to the symmetry of the domain with respect to $x_{2}$-axis, set for $\alpha=1,3,4,6$,
\begin{align*}
{\bf u}_{2}^{\alpha}(x',x_{3})&=\big(
({\bf u}_{2}^{\alpha})^{(1)}(x',x_{3}),
({\bf u}_{2}^{\alpha})^{(2)}(x',x_{3}),
({\bf u}_{2}^{\alpha})^{(3)}(x',x_{3})\big)^{\mathrm T}\\
&=\big(
({\bf u}_{1}^{\alpha})^{(1)}(-x_1,x_2,-x_{3}),
-({\bf u}_{1}^{\alpha})^{(2)}(-x_1,x_2,-x_{3}),
({\bf u}_{1}^{\alpha})^{(3)}(-x_1,x_2,-x_{3})\big)^{\mathrm T},
\end{align*}
and for $\alpha=2,5$,
\begin{align*}
{\bf u}_{2}^{\alpha}(x',x_{3})&=\big(
({\bf u}_{2}^{\alpha})^{(1)}(x',x_{3}),
({\bf u}_{2}^{\alpha})^{(2)}(x',x_{3}),
({\bf u}_{2}^{\alpha})^{(3)}(x',x_{3})\big)^{\mathrm T}\\
&=\big(
-({\bf u}_{1}^{\alpha})^{(1)}(-x_1,x_2,-x_{3}),
({\bf u}_{1}^{\alpha})^{(2)}(-x_1,x_2,-x_{3}),
-({\bf u}_{1}^{\alpha})^{(3)}(-x_1,x_2,-x_{3})\big)^{\mathrm T}.
\end{align*}
Then 
\begin{equation}\label{1346_sym}
a_{22}^{\alpha\beta}=a_{11}^{\alpha\beta},\quad a_{12}^{\alpha\beta}=a_{21}^{\alpha\beta},\quad \alpha,\beta=1,3,4,6,,\quad a_{22}^{25}=a_{11}^{25},\quad a_{12}^{25}=a_{21}^{25},
\end{equation}
and
\begin{equation}\label{25_sym}
a_{22}^{\alpha\beta}=-a_{11}^{\alpha\beta},\quad a_{12}^{\alpha\beta}=-a_{21}^{\alpha\beta},\quad\alpha=2,5,~\beta=1,3,4,6.
\end{equation}
	
In summary, combining \eqref{origin_sym0}--\eqref{25_sym}, we obtain
\begin{align}\label{syma11}
\begin{split}
&a_{11}^{\alpha\beta}=a_{22}^{\alpha\beta}=0,\quad a_{12}^{\alpha\beta}=a_{21}^{\alpha\beta}=0,\quad\alpha=1,~\beta=2,3,4,6,\\
&a_{11}^{\alpha\beta}=a_{22}^{\alpha\beta}=0,\quad a_{12}^{\alpha\beta}=a_{21}^{\alpha\beta}=0,\quad\alpha=2,~\beta=3,4,5,\\
&a_{11}^{\alpha\beta}=a_{22}^{\alpha\beta}=0,\quad a_{12}^{\alpha\beta}=a_{21}^{\alpha\beta}=0,\quad\alpha=3,~\beta=4,5,6,\\
&a_{11}^{\alpha\beta}=a_{22}^{\alpha\beta}=0,\quad a_{12}^{\alpha\beta}=a_{21}^{\alpha\beta}=0,\quad\alpha=4,~\beta=5,6,\\
&a_{11}^{56}=a_{22}^{56}=0,\quad a_{12}^{56}=a_{21}^{56}=0.
\end{split}
\end{align}
Thus,
\begin{equation}\label{symaalphabeta}
a^{\alpha\beta}=0,\quad\alpha,\beta=1,2,3,4,5,6,~\alpha\neq\beta.
\end{equation}
Similarly, 
\begin{equation}\label{symaalphabeta12}
a_*^{\alpha\beta}=0,\quad\alpha,\beta=1,2,3,4,5,6,~\alpha\neq\beta.
\end{equation}

Finally, by the symmetry $({\rm S_{{\boldsymbol\varphi}}})$, setting 
\begin{align*}
{\bf u}_0(x)=-{\bf u}_0(-x),
\end{align*}
it is easy to see that
\begin{equation}\label{|symtilb}
b_{1}^{\alpha}=-b_{2}^{\alpha},\quad \alpha=1,2,3,\quad b_{1}^{\alpha}=b_{2}^{\alpha},\quad\alpha=4,5,6.
\end{equation}
Similarly,
\begin{equation}\label{|symtilb00}
b_{1}^{*\alpha}=-b_{2}^{*\alpha},\quad \alpha=1,2,3,\quad b_{1}^{*\alpha}=b_{2}^{*\alpha},\quad\alpha=4,5,6.
\end{equation}

Now coming back to \eqref{systemC} and using the Cramer's rule,  \eqref{syma11} and \eqref{|symtilb}, we deduce
\begin{equation}\label{C1456}
C_1^\alpha=-C_2^\alpha,\quad \alpha=1,2,3,\quad C_1^\alpha=C_2^\alpha,\quad\alpha=4,5,6.
\end{equation}
By virtue of \eqref{symaalphabeta}, the equation \eqref{C1+C2_1} becomes
\begin{align}\label{C1+C2_matrix}
\mathcal{A}\mathcal{C}=\mathcal{B},
\end{align}
where 
$$\mathcal{A}=\mathrm{diag}\Big(a^{11},a^{22},a^{33},a^{44},a^{55},a^{66}\Big),\quad\mathcal{C}=\Big(0,0,0,\frac{C_1^4+C_2^4}{2},\frac{C_1^5+C_2^5}{2},\frac{C_1^6+C_2^6}{2}\Big)^{\mathrm T},$$
and
$$\mathcal{B}=\Big(0,0,0,2b_{1}^{4},2b_{1}^{5}-(C_1^1-C_2^1)(a_{11}^{15}+a_{12}^{15}),2b_{1}^{6}-(C_1^2-C_2^2)(a_{11}^{26}+a_{12}^{26})\Big)^{\mathrm T}.$$	

Similarly, substituting \eqref{symaalphabeta12} and  \eqref{|symtilb00} into \eqref{equ_C*alpha}, we obtain
\begin{align}\label{C1+C2_*}
\mathcal{A}^*\mathcal{C}^*=\mathcal{B}^*,
\end{align}
where
\begin{align*}
\mathcal{A}^*=\mathrm{diag}\Big(a_{*}^{11},a_{*}^{22},a_{*}^{33},a_{*}^{44},a_{*}^{55},a_{*}^{66}\Big),
\end{align*}
and
$$\mathcal{C}^*=\Big(0,0,0,C_{*}^{4},C_{*}^{5},C_{*}^{6})\Big)^{\mathrm T},\quad\,\mathcal{B}^*=\Big(0,0,0,2b_{1}^{*4},2b_{1}^{*5},2b_{1}^{*6})\Big)^{\mathrm T}.$$
Combining \eqref{C1+C2_matrix} and \eqref{C1+C2_*}, we have
\begin{equation*}
\mathcal{A}(\mathcal{C}-\mathcal{C}^*)=\mathcal{B}-\mathcal{B}^*-\mathcal{C}^*(\mathcal{A}-\mathcal{A}^*)=:\mathcal{R}.
\end{equation*}
From  Proposition  \ref{lemCialpha3D}, Lemma \ref{es b1 b1* beta=1} and Lemma \ref{lem valpha1}, one can find that
\begin{equation*}
\mathcal{R}^\alpha=O\left(\frac{1}{|\ln\varepsilon|}\right),\quad \alpha=1,\dots,6.
\end{equation*}
Then by Cramer's rule, 
\begin{equation*}
\left|\frac{C_1^\alpha+C_2^\alpha}{2}-C_*^\alpha\right|\leq\frac{C}{|\ln\varepsilon|}.
\end{equation*}
The proof of Proposition \ref{propC2D} is completed.
\end{proof}

\begin{proof}[Proof of Proposition \ref{protildeb}]
Recalling \eqref{difference bi beta} and by  Propositions \ref{lemCialpha3D}  and  \ref{propC2D}, we have 
\begin{align*}
|C_{2}^\alpha-C_*^\alpha|&=\left|\frac{C_1^\alpha+C_2^\alpha}{2}-C_*^\alpha-\frac{C_1^\alpha-C_2^\alpha}{2}\right|=\frac{|C_1^\alpha-C_2^\alpha|}{2}\leq C
\begin{cases}
\frac{1}{|\ln\varepsilon|},\quad\alpha=1,2,\\
\varepsilon,\quad\alpha=3,
\end{cases}
\end{align*}
and 
\begin{align*}
|C_{2}^\alpha-C_*^\alpha|&=\left|\frac{C_1^\alpha+C_2^\alpha}{2}-C_*^\alpha-\frac{C_1^\alpha-C_2^\alpha}{2}\right|=\left|\frac{C_1^\alpha+C_2^\alpha}{2}-C_*^\alpha\right|\leq
\frac{C}{|\ln\varepsilon|},~\alpha=4,5,6.
\end{align*}
This, together with Lemma \ref{es b1 b1* beta=1} and Lemma \ref{lem valpha1}, implies that
	\begin{equation*}
	|\tilde b_{1}^{\beta}[{\boldsymbol\varphi}]-\tilde b_{1}^{*\beta}[{\boldsymbol\varphi}]|\leq\frac{C}{|\ln\varepsilon|}.
	\end{equation*}
	The proof of Proposition \ref{protildeb} is finished.
\end{proof}

\subsection{The Completion of the Proof of Theorem \ref{mainthm2}}
\begin{proof}[Proof of Theorem \ref{mainthm2}]
By Propositions \ref{lemCialpha3D} and \ref{propu133D}, we obtain
\begin{align*}
|(C_1^3-C_2^3)\nabla{\bf u}_1^3(0',x_3)|\leq C.
\end{align*}
Then 
\begin{align}\label{nablau_dec}
|\nabla{{\bf u}}(0',x_3)|&=\left|\sum_{\alpha=1}^{3}\left(C_{1}^{\alpha}-C_{2}^{\alpha}\right)\nabla{\bf u}_{1}^{\alpha}(0',x_3)
+\nabla {\bf u}_{b}(0',x_3)\right|\nonumber\\
&\geq\left|\sum_{\alpha=1}^{2}\left(C_{1}^{\alpha}-C_{2}^{\alpha}\right)\nabla{\bf u}_{1}^{\alpha}(0',x_3)\right|-C\nonumber\\
&\geq\left|\sum_{\alpha=1}^{2}\left(C_{1}^{\alpha}-C_{2}^{\alpha}\right)\nabla{\bf v}_{1}^{\alpha}(0',x_3)\right|-C\nonumber\\
&\geq\left|\sum_{\alpha=1}^{2}\left(C_{1}^{\alpha}-C_{2}^{\alpha}\right)\partial_{x_3}{\bf v}_{1}^{\alpha}(0',x_3)\right|-C.
\end{align}
For $x=(0',x_3)\in\Omega_R$,  from the definition of ${\bf v}_1^\alpha$ in \eqref{v1alpha} and \eqref{estv113D}, we have 
\begin{equation}\label{lowerv11}
\partial_{x_3}{\bf v}_1^1(0',x_3)=\left(\varepsilon^{-1},0,0\right)^{\mathrm{T}}, 
\partial_{x_3}{\bf v}_1^2(0',x_3)=\left(0,\varepsilon^{-1},0\right)^{\mathrm{T}}.
\end{equation}
Thus, it suffices to prove the lower bounds of $C_1^\alpha-C_2^\alpha$, $\alpha=1,2$.

It follows from \eqref{equ-decompositon} and Proposition \ref{propC2D} that
\begin{equation*}
\sum_{\alpha=1}^{3}\left(C_{1}^{\alpha}-C_{2}^{\alpha}\right)a_{11}^{\alpha\beta}=\tilde b_1^\beta,\quad\beta=1,2,3.
\end{equation*}
By virtue of Lemma \ref{lema113D} and Lemma \ref{lema114563D}, and \eqref{syma11}, we deduce
\begin{equation*}
\frac{1}{C\varepsilon}|\ln\varepsilon|^2\leq\det\bar{A}\leq\frac{C}{\varepsilon}|\ln\varepsilon|^2,
\end{equation*}
where $\bar{A}=(a_{11}^{\alpha\beta})_{\alpha,\beta=1}^3$. Denote the cofactor of $a_{11}^{\alpha\beta}$ by $\mathrm{cof}(\bar{A})_{\alpha\beta}$. Then
\begin{equation*}
\frac{1}{C\varepsilon}|\ln\varepsilon|\leq \mathrm{cof}(\bar{A})_{\alpha\alpha}\leq\frac{C}{\varepsilon}|\ln\varepsilon|,\quad\alpha=1,2,\quad 	\frac{1}{C}|\ln\varepsilon|^2\leq \mathrm{cof}(\bar{A})_{33}\leq C|\ln\varepsilon|^2,
\end{equation*}
\begin{equation*} |\mathrm{cof}(\bar{A})_{\alpha\beta}|\leq\frac{C}{\varepsilon},\quad\alpha,\beta=1,2,~\alpha\neq\beta,
\end{equation*}
and
\begin{equation*} 
|\mathrm{cof}(\bar{A})_{3\alpha}|,|\mathrm{cof}(\bar{A})_{\alpha 3}|\leq C|\ln\varepsilon|,\quad\alpha=1,2.
\end{equation*}
Using Cramer's rule, if $\tilde b_1^\alpha[{\boldsymbol\varphi}]\neq0$, $\alpha=1,2$, then 
\begin{align}\label{diffC}
|C_1^\alpha-C_2^\alpha|=\left|\frac{1}{\det \bar{A}}\tilde b_1^\alpha[{\boldsymbol\varphi}] \mathrm{cof}(A)_{\alpha\alpha}+O\left(\frac{1}{|\ln\varepsilon|^2}\right)\right|\geq\frac{\Big|\tilde b_1^{\alpha}[{\boldsymbol\varphi}]\Big|}{C|\ln\varepsilon|},\quad\alpha=1,2.
\end{align}
Applying Proposition \ref{protildeb}, if $\tilde b_{1}^{*\alpha_0}[{\boldsymbol\varphi}]\neq0$ for some $\alpha_0\in\{1,2\}$, then there exists  a small enough constant $\varepsilon_0>0$ such that for $0<\varepsilon<\varepsilon_0$, 
\begin{equation*}
\Big|\tilde b_{1}^{\alpha_0}[{\boldsymbol\varphi}]\Big|\geq\frac{1}{2}\Big|\tilde b_{1}^{*\alpha_0}[{\boldsymbol\varphi}]\Big|>0.
\end{equation*}
This, together with \eqref{nablau_dec}--\eqref{diffC}, gives
\begin{align*}
|\nabla{{\bf u}}(0',x_3)|\geq\frac{\Big|\tilde b_1^{*\alpha_0}[{\boldsymbol\varphi}]\Big|}{C\varepsilon|\ln\varepsilon|}.
\end{align*}
So the proof of Theorem \ref{mainthm2} is completed.
\end{proof}

\subsection{Cauchy stress estimates}
Finally, we have the following estimates for the Cauchy stress tensor $\sigma[{\bf u},p]$.

\begin{theorem}\label{mainthmsigma}(Cauchy  Stress estimates)
Under the assumptions of Theorem \ref{mainthm2}, let ${\bf u}\in H^1(D;\mathbb R^3)\cap C^1(\bar{\Omega};\mathbb R^3)$ and $p\in L^2(D)\cap C^0(\bar{\Omega})$ be a solution to \eqref{Stokessys} and \eqref{compatibility}. Then we have 
\begin{equation}\label{equ-sigma3D}
|\sigma[{\bf u},p-(q)_{R}]|\leq \frac{C}{\varepsilon}\|{\boldsymbol\varphi}\|_{C^{2,\alpha}(\partial D;\mathbb R^3)}\quad\mbox{in}~\Omega_R,
\end{equation}
where $(q)_{R}:=\sum_{\alpha=1}^{3}(C_{1}^{\alpha}-C_{2}^{\alpha})(q_{1}^\alpha)_{R}$, and $(q_{1}^\alpha)_{R}$ is defined in \eqref{defqialpha}. Moreover, at the segment $\overline{P_{1}P_{2}}$, if $\tilde b_{1}^{*3}[{\boldsymbol\varphi}]\neq0$, then
$$|\sigma[{\bf u},p-(q)_{R}]|(0',x_{3})\geq\frac{1}{C\varepsilon},\quad|x_{3}|\leq\varepsilon.$$ 
\end{theorem}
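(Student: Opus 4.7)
The plan is to write $\sigma[{\bf u},p-(q)_R]=2\mu\,e({\bf u})-(p-(q)_R)\mathbb{I}$ and to exploit the symmetric reductions already set up in the paper. Under the hypotheses of Theorem \ref{mainthm2}, Proposition \ref{propC2D} asserts $C_1^\alpha-C_2^\alpha=0$ for $\alpha=4,5,6$, so only $\alpha=1,2,3$ contribute to $p-(q)_R$. For the upper bound, Theorem \ref{mainthm} already gives $|2\mu\,e({\bf u})|\leq C|\nabla{\bf u}|\leq C/(\varepsilon|\ln\varepsilon|)$, while Propositions \ref{propu113D}, \ref{propu133D}, \ref{lemCialpha3D} and the boundedness of $p_b$ combine to give
$$|p-(q)_R|\leq \sum_{\alpha=1,2}\frac{C}{|\ln\varepsilon|}\cdot\frac{C}{\varepsilon}+C\varepsilon\cdot\frac{C}{\varepsilon^2}+C\leq \frac{C}{\varepsilon},$$
which establishes \eqref{equ-sigma3D}.

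For the lower bound on $\overline{P_1P_2}$, the key first step is to take a trace: since $\nabla\cdot{\bf u}=0$ forces $\operatorname{tr}(e({\bf u}))=0$, we have $\operatorname{tr}(\sigma[{\bf u},p-(q)_R])=-3(p-(q)_R)$, hence
$$\bigl|\sigma[{\bf u},p-(q)_R]\bigr|(0',x_3)\geq \frac{1}{\sqrt 3}\,|p-(q)_R|(0',x_3).$$
It therefore suffices to prove $|p-(q)_R|(0',x_3)\geq 1/(C\varepsilon)$. Writing $p_1^\alpha=q_1^\alpha+\bar p_1^\alpha$ and observing that $\bar p_1^\alpha(0',x_3)=0$ for $\alpha=1,2$ (because $\bar p_1^\alpha$ carries an overall factor $x_\alpha$ by \eqref{defp113D}), the only term in the decomposition that produces a $1/\varepsilon^2$ pole is $(C_1^3-C_2^3)\bar p_1^3(0',x_3)$. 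A direct computation from \eqref{p133D} gives
$$\bar p_1^3(0',x_3)=-\frac{3\mu}{2\varepsilon^2}-\frac{6\mu x_3^2}{\varepsilon^3},\qquad |x_3|<\varepsilon/2,$$
whose two summands carry the same sign, so $|\bar p_1^3(0',x_3)|\geq 3\mu/(2\varepsilon^2)$ throughout the segment.

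What remains is the matching lower bound $|C_1^3-C_2^3|\geq \varepsilon/C$. I would mimic the argument used in the proof of Theorem \ref{mainthm2}: under $({\rm S_H})$, the vanishing relations \eqref{syma11} force $a_{11}^{\alpha\beta}=0$ for every $\alpha\neq\beta$ with $\alpha,\beta\in\{1,2,3\}$, so the reduced system
$$\sum_{\alpha=1}^3(C_1^\alpha-C_2^\alpha)\,a_{11}^{\alpha\beta}=\tilde b_1^\beta,\qquad \beta=1,2,3,$$
becomes diagonal. Taking $\beta=\alpha=3$ and using $a_{11}^{33}\leq C/\varepsilon$ from Lemma \ref{lema113D} together with Proposition \ref{protildeb} (which delivers $|\tilde b_1^3|\geq \tfrac{1}{2}|\tilde b_1^{*3}|>0$ for all sufficiently small $\varepsilon$) yields $|C_1^3-C_2^3|\geq \varepsilon|\tilde b_1^{*3}|/C$. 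Consequently $|(C_1^3-C_2^3)\bar p_1^3(0',x_3)|\geq |\tilde b_1^{*3}|/(C\varepsilon)$, whereas the remaining pieces of $(p-(q)_R)(0',x_3)$ are of size $O(1/(\varepsilon|\ln\varepsilon|))$ coming from $\alpha=1,2$, and $O(1)$ coming from $p_b$ and from the $q_1^3$-part after multiplication by $C_1^3-C_2^3$, hence subdominant.

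The main obstacle I anticipate is the control of $|q_1^3(0',x_3)-(q_1^3)_R|$: the only gradient bound available is $\|\nabla q_1^3\|_{L^\infty(\Omega_{\delta/2}(z'))}\leq C\delta^{-3/2}(z')$ from Proposition \ref{propu133D}, which is only barely integrable, so a blunt application of the mean value argument as in \eqref{est-q} will not suffice. The fix is a careful line integral $\int_0^R(\varepsilon+t^2)^{-3/2}\,dt\lesssim 1/\varepsilon$ along a radial path from $(0',x_3)$ to a point in $\Omega_R\setminus\Omega_{R/2}$, where $q_1^3$ is $O(1)$ by standard interior regularity. After multiplication by $|C_1^3-C_2^3|\leq C\varepsilon$ this contribution becomes $O(1)$, genuinely subdominant to the $1/\varepsilon$ main term, thereby closing the lower bound.
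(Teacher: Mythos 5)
Your proposal is correct and follows essentially the same route as the paper: decompose $p-(q)_R$ via \eqref{udecom}, use Proposition \ref{propC2D} to kill $\alpha=4,5,6$, get the upper bound from Propositions \ref{propu113D}, \ref{propu133D}, \ref{lemCialpha3D}, and for the lower bound combine $|C_1^3-C_2^3|\geq \varepsilon|\tilde b_1^{*3}|/C$ (reduced system $(C_1^3-C_2^3)a_{11}^{33}=\tilde b_1^3$ plus Lemma \ref{lema113D} and Proposition \ref{protildeb}) with the $\varepsilon^{-2}$ blow-up of $\bar p_1^3(0',x_3)$ from \eqref{p133D}. The only genuine deviation is the final extraction step: you take the trace, $\operatorname{tr}\sigma[{\bf u},p-(q)_R]=-3(p-(q)_R)$ by incompressibility, and reduce everything to a lower bound on the pressure, whereas the paper works with the $(1,1)$ entry $2\mu\partial_{x_1}{\bf u}^{(1)}-(p-(q)_R)$ and must additionally invoke $|\partial_{x_1}({\bf u}_1^3)^{(1)}(0',x_3)|\leq C/\varepsilon$ to see that the pressure dominates; your trace identity makes that comparison unnecessary and is slightly cleaner. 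One small remark: your worry that the ``blunt'' mean-value argument of \eqref{est-q} fails for $q_1^3$ is unfounded — it gives $|q_1^3(0',x_3)-(q_1^3)_R|\leq C\varepsilon^{-3/2}$, which after multiplication by $|C_1^3-C_2^3|\leq C\varepsilon$ is $O(\varepsilon^{-1/2})$, already subdominant to $\varepsilon^{-1}$; your finer radial line integral (giving $C/\varepsilon$) is of course also valid and supplies a detail the paper leaves implicit when it cites Proposition \ref{propu133D} for $|(p_1^3-(q_1^3)_R)(0',x_3)|\geq 1/(C\varepsilon^2)$.
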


\begin{remark}
Here we show that the blow up rate of $|\sigma[{\bf u},p-(q)_{R}]|$, $\varepsilon^{-1}$  is optimal in dimension three. 
\end{remark}

\begin{proof}[Proof of Theorem \ref{mainthmsigma}]
From \eqref{C1456}, we have
$$C_1^\alpha=C_2^\alpha,\quad\alpha=4,5,6.$$
Instead of \eqref{upper-u3D} and \eqref{upper-p3D},
\begin{align*}
|\nabla{\bf u}(x)|&\leq\sum_{\alpha=1}^{3}\left|(C_{1}^{\alpha}-C_{2}^{\alpha}\right)\nabla{\bf u}_{1}^{\alpha}(x)|+C\nonumber\\
&\leq\sum_{\alpha=1,2}\left|(C_{1}^{\alpha}-C_{2}^{\alpha}\right)\nabla{\bf u}_{1}^{\alpha}(x)|+\left|(C_{1}^{3}-C_{2}^{3}\right)\nabla{\bf u}_{1}^{3}(x)|+C\nonumber\\
&\leq\frac{C}{|\ln\varepsilon|\delta(x')}+C\varepsilon\left(\frac{1}{\delta(x')}+\frac{|x'|}{\delta^2(x')}\right)+C\leq \frac{C}{|\ln\varepsilon|\delta(x')},
\end{align*}
and
\begin{align*}
|p(x)-(q)_{R}|&\leq\sum_{\alpha=1}^{3}\left|(C_{1}^{\alpha}-C_{2}^{\alpha}\right)(p_{1}^{\alpha}(x)-(q_{1}^\alpha)_{R})|+C\nonumber\\
&\leq\sum_{\alpha=1,2}\left|(C_{1}^{\alpha}-C_{2}^{\alpha}\right)(p_{1}^{\alpha}(x)-(q_{1}^\alpha)_{R})|\nonumber\\
&\quad+\left|(C_{1}^{3}-C_{2}^{3}\right)(p_{1}^{3}(x)-(q_{1}^3)_{R})|+C\nonumber\\
&\leq\frac{C}{|\ln\varepsilon|\varepsilon}+\frac{C}{\varepsilon}+C\leq \frac{C}{\varepsilon},
\end{align*}
which implies \eqref{equ-sigma3D}.

For the lower bound, similarly as \eqref{diffC}, and in view of Proposition \ref{protildeb},
\begin{align*}
|C_1^3-C_2^3|=\left|\frac{1}{\det \bar{A}}\tilde b_1^3[{\boldsymbol\varphi}] \mathrm{cof}(A)_{33}+O\left(\frac{\varepsilon}{|\ln\varepsilon|}\right)\right|\geq\frac{\varepsilon}{C}\Big|\tilde b_1^{*3}[{\boldsymbol\varphi}]\Big|.
\end{align*}
By using \eqref{v13upper1} and  Proposition \ref{propu133D}, we have 
\begin{equation*}
|\partial_{x_1}({\bf u}_1^3)^{(1)}(0',x_3)|\leq\frac{C}{\varepsilon}.
\end{equation*}
From \eqref{p133D} and Proposition  \ref{propu133D}, it follows that
\begin{equation*}
|(p_{1}^{3}-(q_1^3)_{R})|(0',x_3)\geq\frac{1}{C\varepsilon^2}.
\end{equation*}
Combining with Propositions \ref{propu113D} and \ref{propu133D}, if $\Big|\tilde b_1^{*3}[{\boldsymbol\varphi}]\Big|\neq0$, then 
\begin{align*}
&\Big|\sigma[{\bf u},p-(q)_{R}]\Big|(0',x_{3})=\Big|2\mu e({\bf u})-(p-(q)_{R})\mathbb{I}\Big|(0',x_{3})\\
\geq&\,\Big|\sum_{\alpha=1}^{3}(C_{1}^{\alpha}-C_{2}^{\alpha})\Big(2\mu e({\bf u}_{1}^{\alpha})-(p_{1}^{\alpha}-(q_1^\alpha)_{R})\mathbb{I}\Big)\Big|(0',x_{3})-C\\
\geq&\,\Big|(C_{1}^{3}-C_{2}^{3})\Big(2\mu \partial_{x_1}({\bf u}_1^3)^{(1)}-(p_{1}^{3}-(q_1^3)_{R})\Big)\Big|(0',x_{3})-\frac{C}{\varepsilon|\ln\varepsilon|}\geq\frac{|\tilde b_{1}^{*3}[{\boldsymbol\varphi}]|}{C\varepsilon}.
\end{align*}
The proof of Corollary \ref{mainthmsigma} is finished.
\end{proof}

\section{Proof of Theorem \ref{mainthmD4}}\label{prfthmD4}
In this section, we prove Theorem \ref{mainthmD4}. Similar to the decomposition \eqref{udecom}, we have 
\begin{align}\label{udecD4}
{\bf u}(x)=\sum_{i=1}^{2}\sum_{\alpha=1}^{\frac{d(d+1)}{2}}C_i^{\alpha}{\bf u}_{i}^{\alpha}(x)+{\bf u}_{0}(x)\quad~ \mbox{and}~
p(x)=\sum_{i=1}^{2}\sum_{\alpha=1}^{\frac{d(d+1)}{2}}C_i^{\alpha}p_{i}^{\alpha}(x)+p_{0}(x),
\end{align}
where ${\bf u}_{i}^{\alpha},{\bf u}_{0}\in{C}^{2}(\Omega;\mathbb R^d),~p_{i}^{\alpha}, p_0\in{C}^{1}(\Omega)$ satisfy \eqref{equ_v1} and \eqref{equ_v3}, respectively. The estimates of $|\nabla{\bf u}_0|$ and $|p_0|$ can be found in Proposition \ref{prop1.7}. To estimate $|\nabla{\bf u}_{i}^{\alpha}|$ and $|p_{i}^{\alpha}|$, we are going to construct auxiliary functions ${\bf v}_{i}^{\alpha}\in C^{2}(\Omega;\mathbb R^d)$ and $\bar p_i^\alpha\in C^1(\Omega)$ in the following. Since the case of $i=2$ is similar by replacing $\frac{x_d}{\delta(x')}$ with $-\frac{x_d}{\delta(x')}$ in $\Omega_{2R}$, we will only consider $i=1$ for instance.  

We seek ${\bf v}_{1}^{\alpha}\in C^{2}(\Omega;\mathbb R^d)$ such that ${\bf v}_{1}^{\alpha}={\bf u}_{1}^{\alpha}={\boldsymbol\psi}_{\alpha}$ on $\partial{D}_{1}$, ${\bf v}_{1}^{\alpha}={\bf u}_{1}^{\alpha}=0$ on $\partial{D}_{2}\cup\partial{D}$, $
\|{\bf v}_{1}^{\alpha}\|_{C^{2}(\Omega\setminus\Omega_{R})}\leq\,C$, and in $\Omega_{2R}$, they are constructed as follows:
\begin{align*}
{\bf v}_{1}^{\alpha}=\boldsymbol\psi_{\alpha}\Big(k(x)+\frac{1}{2}\Big)+\Big(k^2(x)-\frac{1}{4}\Big)
\begin{cases}
\boldsymbol\psi_{d}x_{\alpha},&\alpha=1,\dots,d-1,\\
{\bf F}_{d},&\alpha=d,\\
0,&\alpha=d+1,\dots,\frac{d(d-1)}{2}+1,\\
{\bf F}_{\alpha},&\alpha=\frac{d(d-1)}{2}+2,\dots,\frac{d(d+1)}{2},
\end{cases}
\end{align*}
where  $k(x)=\frac{x_d}{\delta(x')}$, ${\bf F}_{d}=\big(\frac{6x_1}{(d-1)\delta(x')},\dots,\frac{6x_{d-1}}{(d-1)\delta(x')},\frac{12|x'|^2x_d}{(d-1)\delta^2(x')}-2k(x)\big)^{\mathrm T}$, and for $\alpha=\frac{d(d-1)}{2}+2,\dots,\frac{d(d+1)}{2}$, 
\begin{align*}
{\bf F}_{\alpha}&=-\frac{12}{2d-1}\sum_{i=1}^{d-1}\frac{x_ix_{\alpha-\frac{d(d-1)}{2}-1}}{\delta(x')}\boldsymbol\psi_{i}+\Big(\frac{3}{2d-1}-5x_dk(x)\Big)\boldsymbol\psi_{\alpha-\frac{d(d-1)}{2}-1}\\
&\quad-2x_{\alpha-\frac{d(d-1)}{2}-1}k(x)\Big(-\frac{12|x'|^2}{(2d-1)\delta(x')}+\frac{2(d+1)}{2d-1}-3x_dk(x)\Big)\boldsymbol\psi_{d}.
\end{align*}
The corresponding $\bar p_1^\alpha\in C^1(\Omega)$ such that $
\|\bar{p}_1^{\alpha}\|_{C^{1}(\Omega\setminus\Omega_{R})}\leq C$, and in $\Omega_{2R}$,
\begin{equation*}
\bar p_1^\alpha=\begin{cases}
\frac{2\mu x_{\alpha}}{\delta(x')}k(x),&\alpha=1,\dots,d-1,\\
-\frac{3}{(d-1)}\frac{\mu}{\delta^2(x')}+\frac{6\mu}{\delta(x')}\left(\frac{6|x'|^{2}}{(d-1)\delta(x')}-1\right)k^{2}(x),&\alpha=d,\\
0,&\alpha=d+1,\dots,\frac{d(d-1)}{2}+1,\\
\frac{6\mu x_{\alpha-\frac{d(d-1)}{2}-1}}{(2d-1)\delta^2(x')}+\frac{12\mu x_{\alpha-\frac{d(d-1)}{2}-1}}{(2d-1)\delta(x')}\Big(d+1-\frac{6|x'|^2}{\delta(x')}\Big)k^2(x),&\alpha=\frac{d(d-1)}{2}+2,\dots,\frac{d(d+1)}{2}.
\end{cases}
\end{equation*}

Using the argument in the proof of Propositions \ref{propu113D}--\ref{propu4563D}, the following proposition holds. 

\begin{prop}\label{propuD4}
Let ${\bf u}_{i}^{\alpha}\in{C}^{2}(\Omega;\mathbb R^d),~p_{i}^{\alpha}\in{C}^{1}(\Omega)$ be the solution to \eqref{equ_v1}, $\alpha=1,2$. Then it holds that, for $x\in\Omega_{R}$,
\begin{align*}
\|\nabla({\bf u}_{i}^{\alpha}-{\bf v}_{i}^{\alpha})\|_{L^{\infty}(\Omega_{\delta/2}(x'))}\leq
\begin{cases}
C,&\alpha=1,\dots,\frac{d(d+1)}{2},~\alpha\neq d,\\
\frac{C}{\delta^{1/2}(x')},&\alpha=d,
\end{cases}
\end{align*}
and
\begin{align*}
\|\nabla^2({\bf u}_{i}^{\alpha}-{\bf v}_{i}^{\alpha})\|_{L^{\infty}(\Omega_{\delta/2}(x'))}&+\|\nabla q_i^\alpha\|_{L^{\infty}(\Omega_{\delta/2}(x'))}\\
&\leq
\begin{cases}
\frac{C}{\delta(x')},&~\alpha=1,\dots,d-1;\frac{d(d-1)}{2}+2,\dots,\frac{d(d+1)}{2},\\
\frac{C}{\delta^{3/2}(x')},&~\alpha=d,\\
\frac{C}{\delta^{1/2}(x')},&\alpha=d+1,\dots,\frac{d(d-1)}{2}+1.
\end{cases}
\end{align*}
\end{prop}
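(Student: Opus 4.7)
The plan is to follow, $\alpha$-block by $\alpha$-block, the three-step template that produced Propositions \ref{propu113D}--\ref{propu4563D}: pointwise bounds on $({\bf v}_1^\alpha,\bar p_1^\alpha)$ and on ${\bf f}_1^\alpha:=\mu\Delta{\bf v}_1^\alpha-\nabla\bar p_1^\alpha$, a global energy bound $\int_\Omega|\nabla{\bf w}_1^\alpha|^2\,dx\le C$ with ${\bf w}_1^\alpha:={\bf u}_1^\alpha-{\bf v}_1^\alpha$ via Lemma \ref{lemmaenergy}, and a Caccioppoli iteration followed by the Stokes interior bootstrap \eqref{W2pstokes}--\eqref{Wmpstokes}. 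The four blocks $\alpha\in\{1,\dots,d-1\}$ (tangent translations), $\alpha=d$ (normal translation), $\alpha\in\{d+1,\dots,\tfrac{d(d-1)}{2}+1\}$ (tangent-plane rotations), and $\alpha\in\{\tfrac{d(d-1)}{2}+2,\dots,\tfrac{d(d+1)}{2}\}$ ($x_d$-rotations) are higher-dimensional analogues of the 3D blocks $\{1,2\}$, $\{3\}$, $\{4\}$, $\{5,6\}$, respectively, so the three-dimensional arguments transfer essentially verbatim once the correct size estimates are in place.

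First, I would verify by direct differentiation using \eqref{partial_kx} that each ${\bf v}_1^\alpha$ matches the boundary data, that $\nabla\cdot{\bf v}_1^\alpha=0$ in $\Omega_{2R}$, and that $\bar p_1^\alpha$ is chosen so that the $O(\delta^{-2})$ contribution $\mu\partial_{x_dx_d}({\bf v}_1^\alpha)^{(j)}$ in the Laplacian is exactly cancelled by $\partial_{x_j}\bar p_1^\alpha$; the $d$-dependent coefficients $\tfrac{6}{d-1}$ in ${\bf F}_d$ and $\tfrac{12}{2d-1}$ in ${\bf F}_\alpha$ are tuned for this dual purpose, in direct analogy with \eqref{v11-p}, \eqref{estv133p13}, \eqref{estv131p13}, and \eqref{estv151-p151}--\eqref{estv153-p153}. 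A routine but lengthy computation then yields, in $\Omega_{2R}$, $|{\bf f}_1^\alpha|\le C/\delta(x')$ for tangent translations and $x_d$-rotations, $|{\bf f}_1^d|\le C|x'|/\delta^2(x')$, and $|{\bf f}_1^\alpha|\le C/\sqrt{\delta(x')}$ for tangent-plane rotations, with corresponding local $L^2$-bounds on $\Omega_s(z')$ of order $Cs^{d-1}/\delta(z')$, $Cs^{d-1}(s+|z'|)^2/\delta^3(z')$, and $Cs^{d-1}$, respectively.

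Next, the global energy bound via Lemma \ref{lemmaenergy} requires verifying \eqref{int-fw} componentwise. For the translational and tangent-rotational blocks, the argument of Lemma \ref{lem3.0} transfers directly: one integrates by parts in $x'$, uses the mean-value theorem to pick a good slice $|x'|=r_0\in(R,\tfrac{3R}{2})$, and controls the arising products by $(\int_\Omega|\nabla{\bf w}_1^\alpha|^2)^{1/2}$ since $|\nabla_{x'}{\bf v}_1^\alpha|+|\bar p_1^\alpha|\le C|x'|/\delta(x')$ is square-integrable on $\Omega_R$ uniformly in $\varepsilon$. For $\alpha=d$ and the $x_d$-rotational block this direct route fails because $\nabla{\bf v}_1^\alpha$ is too large in $L^2$; I would mimic the device of Lemmas \ref{lem_energyw13} and \ref{lem3.4}, writing each dangerous component of ${\bf f}_1^\alpha$ as a polynomial in $x_d$ with rational $x'$-coefficients, recognising it as $\partial_{x_d}$ of a primitive whose $L^2(\Omega_{r_0})$-norm is bounded, and integrating by parts in $x_d$ using ${\bf w}_1^\alpha=0$ on $\partial D_1\cup\partial D_2$. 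The Caccioppoli inequality \eqref{iterating1} together with the $k_0\sim\delta^{-1/2}$-step iteration of Lemma \ref{lem3.1} then yields $\int_{\Omega_\delta(z')}|\nabla{\bf w}_1^\alpha|^2\,dx\le C\delta^{d}(z')$ for translational and $x_d$-rotational indices, $\le C\delta^{d-1}(z')$ for $\alpha=d$, and $\le C\delta^{d+1}(z')$ for tangent rotations; substituting these, together with the pointwise bounds on $|{\bf f}_1^\alpha|$ and $|\nabla{\bf f}_1^\alpha|$, into \eqref{W2pstokes}--\eqref{Wmpstokes} and using $|x'|^2\le\delta(x')$ to absorb stray $|z'|/\delta$ factors into $C/\sqrt{\delta}$ produces exactly the claimed $L^\infty$ bounds.

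The main obstacle will be the case-by-case verification of the two algebraic identities ($\nabla\cdot{\bf v}_1^\alpha=0$ and the $\partial_{x_dx_d}$-cancellation) for the $x_d$-rotational block, where the correction field ${\bf F}_\alpha$ mixes several building blocks with delicate $d$-dependent coefficients. Rather than grind through every index, I would verify the identities explicitly for one representative of each block (say $\alpha=1$, $\alpha=d$, $\alpha=d+1$, and $\alpha=\tfrac{d(d-1)}{2}+2$) and then observe that, within each block, the remaining indices reduce to the representative by a permutation of the tangential coordinates $x_1,\dots,x_{d-1}$, which preserves $\delta(x')$ as well as both algebraic relations.
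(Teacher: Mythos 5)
Your proposal is correct and follows essentially the same route as the paper, which proves Proposition \ref{propuD4} precisely by transplanting the three-step scheme of Propositions \ref{propu113D}--\ref{propu4563D} (pointwise control of $({\bf v}_1^\alpha,\bar p_1^\alpha)$ and ${\bf f}_1^\alpha$ with the $\partial_{x_dx_d}$-cancellation, the global energy bound via Lemma \ref{lemmaenergy} with the $x_d$-primitive integration-by-parts trick for the $\alpha=d$ and $x_d$-rotation blocks, then the Caccioppoli iteration and local Stokes estimates in their $d$-dimensional form) to the four blocks of indices; your block-by-block exponents ($\delta^{d}$, $\delta^{d-1}$, $\delta^{d+1}$ for the local energies, and the resulting $L^\infty$ bounds) are consistent with the stated conclusion. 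The only point to make explicit in a write-up is that \eqref{iterating1} and \eqref{W2pstokes}--\eqref{Wmpstokes} must be invoked in their $d$-dimensional scaled versions (with $\delta^{-d/2}$ and $\delta^{-(d+2)/2}$ in place of $\delta^{-3/2}$ and $\delta^{-5/2}$), which is what your final numbers implicitly assume.
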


Consequently, for $x\in\Omega_R$, we have 
\begin{align*}
|\nabla {\bf u}_{i}^{\alpha}(x)|\leq
\begin{cases}
\frac{C}{\delta(x')},&\alpha=1,\dots,d-1;\frac{d(d-1)}{2}+2,\dots,\frac{d(d+1)}{2},\\
C\left(\frac{1}{\delta(x')}+\frac{|x'|}{\delta^2(x')}\right),&\alpha=d,\\
C\left(\frac{|x'|}{\delta(x')}+1\right),&\alpha=d+1,\dots,\frac{d(d-1)}{2}+1,
\end{cases}\\ 
|\nabla^2 {\bf u}_{i}^{\alpha}(x)|\leq
\begin{cases}
C\left(\frac{1}{\delta(x')}+\frac{|x'|}{\delta^2(x')}\right),&\alpha=1,\dots,d-1;\frac{d(d-1)}{2}+2,\dots,\frac{d(d+1)}{2},\\
C\left(\frac{1}{\delta^2(x')}+\frac{|x'|}{\delta^3(x')}\right),&\alpha=d,\\
\frac{C}{\delta^2(x')},&\alpha=d+1,\dots,\frac{d(d-1)}{2}+1,
\end{cases}\\ 
|p_{i}^{\alpha}(x)-(q_{i}^\alpha)_{R}|\leq
\begin{cases}
\frac{C}{\delta(x')},&\alpha=1,\dots,d-1,\\
\frac{C}{\delta^2(x')},&\alpha=d,\\
\frac{C}{\delta^{1/2}(x')},&\alpha=d+1,\dots,\frac{d(d-1)}{2}+1,\\
C\left(\frac{1}{\delta(x')}+\frac{|x'|}{\delta^2(x')}\right),&\alpha=\frac{d(d-1)}{2}+2,\dots,\frac{d(d+1)}{2},
\end{cases}
\end{align*}
and 
\begin{align*}
|\nabla p_{i}^{\alpha}(x)|\leq
\begin{cases}
C\left(\frac{1}{\delta(x')}+\frac{|x'|}{\delta^2(x')}\right),&\alpha=1,\dots,d-1,\\
C\left(\frac{1}{\delta^2(x')}+\frac{|x'|}{\delta^3(x')}\right),&\alpha=d,\\
\frac{C}{\delta^{1/2}(x')},&\alpha=d+1,\dots,\frac{d(d-1)}{2}+1,\\
\frac{C}{\delta^2(x')},&\alpha=\frac{d(d-1)}{2}+2,\dots,\frac{d(d+1)}{2},
\end{cases}
\end{align*}
where $(q_{i}^\alpha)_{R}$ is defined in \eqref{defqialpha}.

\begin{proof}[Proof of Theorem \ref{mainthmD4}]
By the argument in the proof of \cite[(4.1)]{BLL2} and using \cite[Lemma 6.1]{BLL2}, we still have \eqref{bddC} for $d\geq4$. Then in view of \eqref{udecD4}, Propositions \ref{propuD4} and \ref{prop1.7}, we obtain
\begin{align*}
|\nabla {\bf u}(x)|&=\Big|\sum_{i=1}^{2}\sum_{\alpha=1}^{\frac{d(d+1)}{2}}C_i^{\alpha}\nabla{\bf u}_{i}^{\alpha}(x)+\nabla{\bf u}_{0}(x)\Big|\\
&\leq 
C\left(\frac{1}{\delta(x')}+\frac{|x'|}{\delta^2(x')}\right)+\frac{C}{\delta(x')}\leq 
\frac{C|x'|}{\delta^2(x')}+\frac{C}{\delta(x')},
\end{align*}
\begin{align*}
|\nabla^2 {\bf u}(x)|=\Big|\sum_{i=1}^{2}\sum_{\alpha=1}^{\frac{d(d+1)}{2}}C_i^{\alpha}\nabla^2{\bf u}_{i}^{\alpha}(x)+\nabla^2{\bf u}_{0}(x)\Big|\leq 
C\left(\frac{1}{\delta^2(x')}+\frac{|x'|}{\delta^3(x')}\right),
\end{align*}
\begin{align*}
|p(x)-(q)_{R}|\leq \sum_{i=1}^{2}\sum_{\alpha=1}^{\frac{d(d+1)}{2}}|C_i^{\alpha}(p_{i}^{\alpha}(x)-(q_i^\alpha)_{R})|+C\leq\frac{C}{\varepsilon^2},
\end{align*}
and
\begin{align*}
|\nabla p(x)|= \Big|\sum_{i=1}^{2}\sum_{\alpha=1}^{\frac{d(d+1)}{2}}C_i^{\alpha}\nabla p_{i}^{\alpha}(x)+\nabla p_0(x)\Big|\leq C\left(\frac{1}{\delta^2(x')}+\frac{|x'|}{\delta^3(x')}\right),
\end{align*}
where $(q)_{R}:=\sum_{i=1}^{2}\sum_{\alpha=1}^{\frac{d(d+1)}{2}}C_{i}^{\alpha}(q_{i}^\alpha)_{R}$. This finishes the proof of Theorem \ref{mainthmD4}.
\end{proof}

\noindent{\bf{\large Acknowledgements.}}
H.G. Li was partially supported by NSF of China (11971061).


\begin{thebibliography}{99}


\bibitem{ADN1959} S. Agmon, A. Douglis, L. Nirenberg, Estimates Near the Boundary for Solutions of Elliptic Partial Differential Equations Satisfying General Boundary Conditions I, Comm. Pure Appl. Math., 12 (1959), 623-727.

\bibitem{ADN1964} S. Agmon, A. Douglis, L. Nirenberg, Estimates Near the Boundary for Solutions of Elliptic Partial Differential Equations Satisfying General Boundary Conditions II, Comm.Pure Appl. Math., 17 (1964), 35-92.

\bibitem{AGKL} H. Ammari, P. Garapon, H. Kang, H. Lee, Effective viscosity properties of dilute suspensions of arbitrarily shaped particles. Asymptot. Anal. 80 (2012), no. 3-4, 189-211. 

\bibitem{AGGJS} H. Ammari, J. Garnier, L. Giovangigli, W. Jing, J. Seo,  Spectroscopic imaging of a dilute cell suspension. J. Math. Pures Appl. (9) 105 (2016), no. 5, 603-661.

\bibitem{AKKY} H. Ammari, H. Kang, D. Kim, S. Yu, Quantitative estimates for stress concentration of the Stokes flow between adjacent circular cylinders. (2020) arXiv:2003.06578.

\bibitem{AKL} H. Ammari, H. Kang, M. Lim, Gradient estimates to the conductivity problem, Math. Ann. 332 (2005) 277-286.


\bibitem{AKL3} H. Ammari, H. Kang, H. Lee, J. Lee, M. Lim, Optimal estimates for the electrical field in two dimensions, J. Math. Pures Appl. 88 (2007) 307--324.

\bibitem{Bab} I. Babu\u{s}ka, B. Andersson, P. Smith, K. Levin, Damage analysis of fiber composites. I. Statistical analysis on fiber scale, Comput. Methods Appl. Mech. Eng. 172 (1999) 27-77.

\bibitem{BLL}  J.G. Bao, H.G. Li, and Y.Y. Li. Gradient estimates for solutions of the Lam\'{e} system with partially infinite coefficients, Arch. Ration. Mech. Anal. 215 (2015), 307-351.

\bibitem{BLL2} J.G. Bao; H.G. Li; Y.Y. Li, Gradient estimates for solutions of the Lam\'e system with partially infinite coefficients in dimensions greater than two. Adv. Math. 305 (2017), 298-338.


\bibitem{BLY} E. Bao, Y.Y. Li, B. Yin, Gradient estimates for the perfect conductivity problem, Arch. Ration. Mech. Anal. 193 (2009) 195-226.

\bibitem{BMT} E. Bonnetier, D. Manceau, F. Triki, Asymptotic of the velocity of a dilute suspension of droplets with interfacial tension. Quart. Appl. Math. 71 (2013), no. 1, 89-117.

\bibitem{BT} E. Bonnetier, F. Triki, On the spectrum of the Poincar\'e variational problem for two close-to-touching inclusions in 2D, Arch. Ration. Mech. Anal. 209 (2013) 541-567.

\bibitem{BV} E. Bonnetier, M. Vogelius, An elliptic regularity result for a composite medium with touching fibers of circular cross-section, SIAM J. Math. Anal. 31 (2000) 651-677.

\bibitem{BC} B. Budiansky, G.F. Carrier, High shear stresses in stiff fiber composites, J. Appl. Mech. 51 (1984) 733-735.

\bibitem{CL} Y. Chen, H.G. Li,  Estimates and Asymptotics for the stress concentration between closely spaced stiff $C^{1, \gamma}$ inclusions in linear elasticity. J. Funct. Anal. 281 (2021), 109038.

\bibitem{Cheng1967} H.S. Cheng, Calculation of elastohydrodynamic film thickness in high speed rolling and sliding contacts, in MTI Technical Report, MIT-67TR24, (1967).

\bibitem{CS1} G. Ciraolo, A. Sciammetta, Gradient estimates for the perfect conductivity problem in anisotropic media, J. Math. Pures Appl. 127 (2019) 268-298.

\bibitem{CS2} G. Ciraolo, A. Sciammetta, Stress concentration for closely located inclusions in nonlinear perfect conductivity problems, J. Differ. Equ. 266 (2019) 6149-6178.

\bibitem{Cope1949} W. F. Cope, The hydrodynamical theory of film lubrication. Proc. the Royal Soc A. (1949), 201-217.


\bibitem{DH1966} D. Dowson,  G. R. Higginson,  Elastohydrodynamic lubrication, the fundamentals of roller and gear lubrication. Pergamon, Oxford. (1966)


\bibitem{GH} D. Grard-Varet, M. Hillairet, Analysis of the viscosity of dilute suspensions beyond Einstein's formula. Arch. Ration. Mech. Anal. 238 (2020), no. 3, 1349-1411.


\bibitem{G1994} D. Gidaspow, Multiphase flow and fluidization. Academic Press, San Diego, (1994)

\bibitem{H1994} B.J. Hamrock, Fundamentals of Fluid Film Lubrication. McGraw-Hill, New York. (1994)


\bibitem{K} H. Kang, Quantitative analysis of field concentration in presence of closely located inclusions of high contrast, Proceedings of the International Congress of Mathematicians 2022.

\bibitem{KLeY}H. Kang, H. Lee, K. Yun, Optimal estimates and asymptotics for the stress concentration between closely located stiff inclusions, Math. Ann. 363 (2015) 1281-1306.

\bibitem{KLiY} H. Kang, M. Lim, K. Yun, Asymptotics and computation of the solution to the conductivity equation in the presence of adjacent inclusions with extreme conductivities, J. Math. Pures Appl. 99 (2013) 234-249.

\bibitem{KLiY2}H. Kang, M. Lim, K. Yun, Characterization of the electric field concentration between two adjacent spherical perfect conductors, SIAM J. Appl. Math. 74 (2014) 125-146.


\bibitem{KY} H. Kang, S. Yu, Quantitative characterization of stress concentration in the presence of closely spaced hard inclusions in two-dimensional linear elasticity, Arch. Ration. Mech. Anal. 232 (2019), 121-196.

\bibitem{KangYu} H. Kang, S. Yu, Singular functions and characterizations of field concentrations: a survey. Anal. Theory Appl. 37 (2021), no. 1, 102-113.

\bibitem{Kratz} W. Kratz, On the maximum modulus theorem for Stokes function, Appl. Anal. 58 (1995), 293-302. 

\bibitem{Lady1959} O.A. Ladyzhenskaya, Investigation of the Navier-Stokes Equation for a Stationary Flow of an Incompressible Fluid, Uspehi Mat. Nauk., (3) 14 (1959) 75-97.


\bibitem{Lady} O.A. Ladyzhenskaya,  The mathematical theory of viscous incompressible flow. Mathematics and its applications, vol 2, 2nd edn. Gordon and Breach, New York (1969) 

\bibitem{Lamb} H. Lamb, Hydrodynamics. Cambridge University Press, Cambridge. (1995)

\bibitem{Li2021} H.G. Li, Lower bounds of gradient's blow-up for the Lam\'e system with partially infinite coefficients, J. Math. Pures Appl. 149 (2021) 98-134.

\bibitem{Li} H.G. Li, Asymptotics for the electric field concentration in the perfect conductivity problem, SIAM J. Math. Anal. 52 (2020) 3350-3375.

\bibitem{LLBY} H.G. Li, Y.Y. Li, E.S. Bao, B. Yin, Derivative estimates of solutions of elliptic systems in narrow regions, Q. Appl. Math. 72(3) (2014) 589-596.

\bibitem{LX} H.G. Li, L.J. Xu,  Asymptotics of the stress concentration in high-contrast elastic composites, arXiv:2004.06310.

\bibitem{LX2} H.G. Li, L.J. Xu, Estimates for stress concentration between two adjacent rigid inclusions in two-dimensional Stokes flow.  arXiv:2204.00254v2. Preprint

\bibitem{Mazya} V. Maz'ya, J. Rossmann,  A maximum modulus estimate for solutions of the Navier-Stokes system in domains of polyhedral type. Math. Nachr. 282 (2009), no. 3, 459-469. 


\bibitem{LN} Y.Y. Li, L. Nirenberg, Estimates for elliptic system from composite material, Commun. Pure Appl. Math. 56 (2003) 892-925.

\bibitem{LV}Y.Y. Li, M. Vogelius, Gradient estimatesfor solutions to divergence form elliptic equations with discontinuous coefficients, Arch. Ration. Mech. Anal. 153 (2000) 91-151.

\bibitem{Odqvist} K. G. Odqvist, \"Uber die randwertaufgaben der hydrodynamik z\"aher fl\"ussigkeiten, Math. Z. 32 (1930) 329-375.

\bibitem{Oscar1987} P. Oscar, The Reynolds centennial: a brief history of the theory of hydrodynamic lubrication. Trans. ASME 109, (1987), 2--20.

\bibitem{Reynolds1886} O. Reynolds, On the theory of lubrication and its application to Mr. Beauchamp Towers experiments, including an experimental determination of the viscosity of olive oil. Philos. Trans. R. Soc. 177, (1886) 157-234.

\bibitem{S1989} S.L. Soo, Particulates and continuum-multiphase fluid dynamics: multiphase fluid dynamics. CRC Press, 1989.

\bibitem{SB2001} A. Sierou, J.F. Brady, Accelerated Stokesian dynamic simulations. J. Fluid Mech. 448 (2001), 115-146.

\bibitem{SB2002} A. Sierou, J.F. Brady, Rheology and microstructure in concentrated noncolloidal suspensions. J. Rheol. 46 (2002), 1031-1056.

\bibitem{Solonni1966} V.A. Solonnikov, General Boundary Value Problems for Douglis-Nirenberg Elliptic Systems II, Trudy Mat. Inst. Steklov, 92, 233-297; English Transl.: Proc. Steklov Inst. Math, 92, (1966), 212-272.

\bibitem{T1984} R. Temam, Navier-Stokes Equations, North-Holland, Amsterdam, 1984.

\bibitem{Yag2012} A.M. Yaglom, Hydrodynamic instability and transition to turbulence. Fluid Mechanics and ItsApplications, 100. Springer, Dordrecht, (2012).

\bibitem{Yun} K. Yun, Estimates for electric fields blown up between closely adjacent conductors with arbitrary shape, SIAM J. Appl. Math. 67 (2007) 714-730.

\bibitem{Yun2} K. Yun, An optimal estimate for electric fields on the shortest line segment between two spherical insulators in three dimensions, J. Differ. Equ. 261 (2016) 148-188.

\end{thebibliography}
\end{document}